\newtheorem{theorem}{Theorem}[section]
\newtheorem{lemma}[theorem]{Lemma}
\newtheorem{proposition}{Proposition}[section]
\newtheorem{corollary}{Corollary}[theorem]
\theoremstyle{definition}
\newtheorem{definition}{Definition}
\newtheorem{remark}{Remark}
\numberwithin{equation}{section}
\newcommand{\vertiii}[1]{{\left\vert\kern-0.25ex\left\vert\kern-0.25ex\left\vert #1 \right\vert\kern-0.25ex\right\vert\kern-0.25ex\right\vert}}
\author{Garrett Heller}
\address{Thomas Jefferson High School for Science and Technology, 6560 Braddock Rd, Alexandria, VA}
\email{2022gheller@tjhsst.edu}
\author{Chengyang Shao}
\address{Massachusetts Institute of Technology, 77 Mass Ave, Cambridge, MA 02139}
\email{shaoc@mit.edu}
\title{Strichartz and Multi-linear Estimates for the One-dimensional Periodic Dysthe equation}
\begin{document}
\maketitle
\begin{spacing}{1.1}
\begin{abstract} 
\normalsize This paper presents Strichartz estimates for the linearized 1D periodic Dysthe equation on the torus, namely estimate of the $L^6_{x,t}(\mathbb{T}^2)$ norm of the solution in terms of the initial data, and estimate of the $L^4_{x,t}(\mathbb{T}^2)$ norm in terms of the Bourgain space norm. The paper also presents other results such as bilinear and trilinear estimates pertaining to local well-posedness of the 1-dimensional periodic Dysthe equation in a suitable Bourgain space, and ill-posedness results in Sobolev spaces.
\end{abstract}
\section{Introduction and Background}

\subsection{History of the Topic}
Rogue Waves are unusually large surface waves characterized by having at least double the significant wave height, or eight times the standard deviation of the surface elevation \cite{DystheKrogstadMuller2008}. They are capable of causing great destruction such as hitting ships or even buildings on shore \cite{DidenkulovaSlunyaevPellinovskyKharif2006}. In 2005 alone, Didenkulova et. al. reported three incidents of Rogue Waves hitting passenger ships, causing multiple injuries and ship damage, including a 15 meter wave striking the cruise ship ``Voyager" \cite{DidenkulovaSlunyaevPellinovskyKharif2006}. As such, researchers have focused heavily on Rogue Wave behavior and specifically the mathematical models of it in the past quarter century \cite{DystheKrogstadMuller2008}, \cite{FarazmandSapsis2017}. Among these models, the most well-known one is the Dysthe equation.

In 1979, Dysthe derived the multi non-linear Schrödinger equation (MNLS), now called the \emph{Dysthe equation}, from a hydrodynamical perspective \cite{Dysthe1979}. The Dysthe equation is at the center of heavy research today in fluid mechanics due to its relevancy in deep water Rogue Waves. Due to the complexity of the nonlinear terms in the Dysthe equation, these Rogue Waves are still not yet fully understood mathematically. In 2017, Farazmand and Sapsis conducted a numerical analysis of these Rogue Waves based on the Dysthe equation, in order to predict water wave and weather patterns \cite{FarazmandSapsis2017}. More numerical results would be aided by a better mathematical understanding of the Dysthe equation. In 1993, a landmark series of papers by Bourgain introduced new techniques on Fourier transform restriction norms, which could be used to obtain norm bounds on functions on a periodic setting \cite{Bourgain1993}. Bourgain obtained bounds on solutions to naturally arising Diophantine equations which were used to prove the well-posedness of the Korteweg de Vries (KdV) equation on $\mathbb{T} \times \mathbb{R}$ \cite{Bourgain1993}. Many of the methods later in Bourgain's paper were improved by the group Colliander, Keel, Staffilani, Takaoka and Tao, collectively known as the ``I-Team." In a 2006 work \cite{CKSTT} they formalized an approach based on multilinear estimates to achieve local well-posedness of the KdV equation on the torus.
    
In this paper, we shall extend the techniques of Bourgain, the ``I-Team," and others in the hopes of contributing to a local well-posedness proof on the periodic $\mathbb{T}^2$ for the Dysthe equation.

\subsection{Fourier Transform Notation}
If $f(x)$ is a function of the spatial variable $x$ only, then $\widehat{f}(n)$ denotes the Fourier transform of $f$ with respect to $x$. If $g(x,t)$ is a function of the spacetime variable $(x, t)$, then $\widehat{g}(n, \tau)$ denotes the Fourier transform of $g$ with respect to $(x,t)$. In this case, $(\mathcal{F}_x) g (n, t)$ denotes the Fourier transform of $g$ with respect to the spatial variable $x$. 

\subsection{1-Dimensional Dysthe Equation}
We shall state the 1-dimensional Dysthe equation derived in \cite{Dysthe1979}. The problem shall be framed as a periodic one, that is the space variable will be $x\in\mathbb{T}$. The periodic setting has been used for numerical studies of Rogue Waves in \cite{FarazmandSapsis2017}, while well-posedness in this setting has not been worked on in this area. The problem differs from that framework of the whole-space: in the whole-space setting, the Dysthe equation exhibits dispersive behaviors (see e.g. \cite{GrandeKurianskiStaffilani2020}), whereas solutions in the periodic setting do not decay. There are other tools one can use in the periodic framework, such as techniques from discrete mathematics. The paper of Grande, Kurianski and Staffilani \cite{GrandeKurianskiStaffilani2020} improved local well-posedness results in the whole-space setting; however, less results have been achieved in the periodic setting. 

In the following, we use $u^*$ to denote the complex conjugate of $u$. Consider the Cauchy problem for the 1-dimensional Dysthe equation:

\begin{equation}\label{1deq}\left\{
\begin{aligned} \partial_t{u} + &\frac{1}{2} \partial_x u + \frac{i}{8}\partial_{xx} u - \frac{1}{16}\partial_{xxx}{u} + \frac{i}{2}|u|^2u + \frac{3}{2}|u|^2 \partial_x u + \frac{1}{4}u^2\partial_x{u^*} -  \frac{i}{2}u|\partial_x||u|^2 =0 \\
u(x, 0) &=u_0(x)
\end{aligned}
\right.
\end{equation}

In the above equation, $u$ represents the complex-valued wave envelope, and it depends on a single space coordinate $x\in\mathbb{T}$, and the time coordinate $t$. This paper focuses on well-posedness in the torus $[0, 2\pi]_x \times [0, 32\pi]_t$. Under this setting, the wave envelope is then periodic with respect to both $x$ and $t$. 

Linearizing (\ref{1deq}) around the trivial solution $u\equiv0$, one obtains the linearized Dysthe equation:
\begin{equation}\label{1dlin}\left\{
\begin{aligned}
\partial_t u + &\frac{1}{2} \partial_x u + \frac{i}{8}\partial_{xx} u - \frac{1}{16}\partial_{xxx}{u} =0 \\
u(x, 0) &= u_0(x)
\end{aligned}
\right.
\end{equation}
for a given function $u_0$. 

Taking the Fourier transform with respect to $x$ to get rid of the partial derivatives in $x$, one finds the equation in the Fourier side is
$$
\partial_t (\mathcal{F}_x{u})(n,t) + \frac{i}{2}n (\mathcal{F}_x{u})(n,t) - \frac{i}{8} n^2 (\mathcal{F}_x{u})(n,t) + \frac{i}{16}n^3 (\mathcal{F}_x{u})(n,t) =0
$$
and $\widehat{u}(x, 0) = \widehat{u}_0 (x)$.
The former is a separable equation so integration yields
$$
(\mathcal{F}_x{u})(n,t) = e^{ip(n)t}\widehat{u}_0(n) 
$$
where $p$ is the dispersive relation given by 
$$
p(n) = -\frac{n^3}{16} + \frac{n^2}{8} - \frac{n}{2}.
$$

It turns out one can transform the torus $[0, 2\pi]_x \times [0, 32\pi]_t$ to the standard $\mathbb{T}^2 = [0, 2\pi]_x \times [0, 2\pi]_t$ by scaling our dispersive relation by $16$ to 
\begin{equation}\label{dispersive}
P(n) = n^3 - 2n^2 + 8n.
\end{equation}
From this point forward we shall consider (\ref{dispersive}) as our dispersive relation, and $[0, 2\pi]_x \times [0, 2\pi]_t$ as our spacetime. With this convention, one finds that the solution of the linearized equation (\ref{1dlin}) is
\begin{equation}\label{1dlinsol}
 u(x, t) = \sum_{n \in \mathbb{Z}}{e^{inx + iP(n)t} \widehat{u}_0(n)}.
\end{equation}

\begin{definition} \label{linprop}
Define the linear propagator $e^{it\mathcal{L}}$ such that 
$$
(e^{it\mathcal{L}}u)(x, t) 
= \sum_{n \in \mathbb{Z}}{e^{itP(n)}(\mathcal{F}_xu)(n,t)e^{inx}}.
$$
Notice the linear propogator acting on the initial state denotes the solution to the linearized Dysthe equation.
\end{definition}

Next define a nonlinear mapping $\mathcal{N}(u)$ by 
\begin{equation}\label{Nonlinear}
\mathcal{N}(u) = -\frac{i}{2}|u|^2u - \frac{3}{2}|u|^2 \partial_x u - \frac{1}{4}u^2\partial_x{u^*} + \frac{1}{2}iu|\partial_x||u|^2.
\end{equation}
Then by Duhamel's principle, the solution $u$ to the Dysthe equation (\ref{1deq}) for $|t|<1$ must satisfy the integral equation
\begin{equation}\label{duhamelrep}
u(t) = \eta(t)e^{it\mathcal{L}} u_0 + \eta(t)\int_{0}^{t}{d\tau e^{i(t-\tau)\mathcal{L}}\mathcal{N}(u(\tau))}
\end{equation}
where we introduced a smooth localization (bump) function $\eta$ supported by $[-2, 2]$ and $1$ on the interval $[-1, 1]$. This form is convenient for a few reasons: it reduces our initial value problem to a single equation, weakens the need for differentiability, and given sufficient bounds it can be used in a standard Banach's Contraction Mapping argument. 

\subsection{Main Results}
The main results of our paper are the following three theorems. The first one is the Strichartz estimate for the linearized 1D periodic Dysthe equation:

\begin{theorem} \label{str}
Let the solution to the linearized 1d periodic Dysthe initial value equation be $u(x, t),$ with $u(x, 0) = u_0(x)$ as the initial condition. Then $u$ satisfies the following Strichartz Inequality for every $\epsilon >0$:
$$
\|u\|_{L^6_{x, t}(\mathbb{T}^2)} \leq C_{\epsilon}\|u_0\|_{H^\epsilon},
$$
where $C_\epsilon$ is a constant depending only on $\epsilon$. A useful restatement of this theorem is that for any $\epsilon>0$
$$\|e^{it\mathcal{L}}f\|_{L^6_{x,t}(\mathbb{T}^2)} \lesssim \|f\|_{C_tH^{\epsilon}_x(\mathbb{T}^2)}.
$$
\end{theorem}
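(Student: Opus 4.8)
The plan is to pass to the Fourier side on $\mathbb{T}^2$ and reduce the $L^6$ bound to a lattice-point count for the Diophantine system attached to the dispersion relation $P$, which is in turn controlled by the classical divisor bound. Since $u$ is complex valued, $\|u\|_{L^6_{x,t}(\mathbb{T}^2)}^6=\|u^3\|_{L^2_{x,t}(\mathbb{T}^2)}^2$, so it suffices to bound $\|u^3\|_{L^2_{x,t}}$; by density I would assume $u_0$ is a trigonometric polynomial. Cubing the explicit solution (\ref{1dlinsol}) gives
\begin{equation*}
u(x,t)^3=\sum_{n_1,n_2,n_3\in\mathbb{Z}}\widehat{u}_0(n_1)\widehat{u}_0(n_2)\widehat{u}_0(n_3)\,e^{i(n_1+n_2+n_3)x}\,e^{i(P(n_1)+P(n_2)+P(n_3))t},
\end{equation*}
and since $P(\mathbb{Z})\subseteq\mathbb{Z}$, these exponentials are characters of $\mathbb{T}^2$, so Parseval's identity yields
\begin{equation*}
\|u^3\|_{L^2_{x,t}}^2\lesssim\sum_{(n,m)\in\mathbb{Z}^2}\left|\sum_{\substack{n_1+n_2+n_3=n\\ P(n_1)+P(n_2)+P(n_3)=m}}\widehat{u}_0(n_1)\widehat{u}_0(n_2)\widehat{u}_0(n_3)\right|^2.
\end{equation*}
By Cauchy--Schwarz in the inner sum, the square is at most $D(n,m)\sum_{n_1+n_2+n_3=n,\;\sum_iP(n_i)=m}|\widehat{u}_0(n_1)\widehat{u}_0(n_2)\widehat{u}_0(n_3)|^2$, where $D(n,m)$ counts the integer triples $(n_1,n_2,n_3)$ with $n_1+n_2+n_3=n$ and $P(n_1)+P(n_2)+P(n_3)=m$. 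I would then sum over $(n,m)$, keeping $D(n,m)$ inside the sum rather than taking a crude supremum.

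The heart of the argument is the counting bound $D(n,m)\le C_\epsilon(1+|n|+|m|)^{\epsilon}$. Write $e_1,e_2,e_3$ for the elementary symmetric functions of $n_1,n_2,n_3$; then $\sum_in_i^2=e_1^2-2e_2$ and $\sum_in_i^3=e_1^3-3e_1e_2+3e_3$, so $\sum_iP(n_i)=P(e_1)+(4-3e_1)e_2+3e_3$. Thus, with $e_1=n$ fixed, the cubic constraint $\sum_iP(n_i)=m$ is equivalent to the \emph{linear} relation $(4-3n)e_2+3e_3=m-P(n)$. Now fix the value of $n_1$ and set $j=n-n_1=n_2+n_3$; since $e_2=n_1j+n_2n_3$ and $e_3=n_1n_2n_3$, this rewrites as $(4-3j)\,n_2n_3=m-P(n)-(4-3n)(n-j)j$. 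Because $4-3j\equiv1\pmod{3}$ is a nonzero integer coprime to $3$, $n_2n_3$ is forced as a rational number, and polynomial division in $j$ shows it can be an integer only if $(4-3j)$ divides the fixed integer $c:=4(4-3n)^2+9(m-P(n))$, which is nonzero (else $9\mid4(4-3n)^2$, i.e.\ $3\mid4$) and obeys $|c|\lesssim 1+|n|^3+|m|$. Hence there are at most $O_\epsilon(|c|^{\epsilon})$ admissible values of $n_1$ by the divisor bound, and for each such $n_1$ the pair $(n_2,n_3)$, having prescribed sum and product, takes at most two values; so $D(n,m)\le C_\epsilon(1+|n|+|m|)^{\epsilon}$.

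To conclude, observe that for any triple contributing to $(n,m)$ one has $|n|+|m|\lesssim(\langle n_1\rangle\langle n_2\rangle\langle n_3\rangle)^{3}$, so after relabelling $\epsilon$ the bound reads $D(n,m)\le C_\epsilon(\langle n_1\rangle\langle n_2\rangle\langle n_3\rangle)^{\epsilon}$ on the support of the summand. Inserting this and summing freely over all triples decouples the sum:
\begin{equation*}
\|u\|_{L^6_{x,t}}^6\le C_\epsilon\sum_{n_1,n_2,n_3}\langle n_1\rangle^{\epsilon}\langle n_2\rangle^{\epsilon}\langle n_3\rangle^{\epsilon}|\widehat{u}_0(n_1)|^2|\widehat{u}_0(n_2)|^2|\widehat{u}_0(n_3)|^2=C_\epsilon\left(\sum_{n}\langle n\rangle^{\epsilon}|\widehat{u}_0(n)|^2\right)^{3}=C_\epsilon\|u_0\|_{H^{\epsilon/2}}^6,
\end{equation*}
which, $\epsilon>0$ being arbitrary, is the claimed $\|u\|_{L^6_{x,t}(\mathbb{T}^2)}\le C_\epsilon\|u_0\|_{H^{\epsilon}}$; the restatement is the special case $f(x,t)=u_0(x)$, for which $e^{it\mathcal{L}}f=u$ and $\|f\|_{C_tH^\epsilon_x}=\|u_0\|_{H^\epsilon}$. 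The one genuinely delicate point is the bound for $D(n,m)$: a priori the relevant level set is a cubic plane curve, on which one would expect only $O(N^{1/3+\epsilon})$ lattice points of size $N$, but the Newton identity above collapses it to a linear relation between $e_2$ and $e_3$; fixing one frequency then degenerates this into a divisibility condition, and the divisor bound produces only the stated $\epsilon$-loss. (The same scheme gives the $L^4$ analogue with no loss at all, since the two-frequency level set forces the value of $n_1n_2$.)
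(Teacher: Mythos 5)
Your proposal is correct, and it rests on the same core mechanism as the paper's proof: reduce $\|u\|_{L^6}^6=\|u^3\|_{L^2}^2$ via Plancherel to a weighted count of resonant triples $n_1+n_2+n_3=n$, $\sum_iP(n_i)=m$, and control that count by the divisor bound after observing that $\sum_iP(n_i)-P(n)$ is linear in the elementary symmetric functions $e_2,e_3$, so that fixing one more datum collapses the cubic level set to a divisibility condition. (Your identity $\sum_iP(n_i)=P(e_1)+(4-3e_1)e_2+3e_3$ and the resulting condition $(4-3j)\mid 4(4-3n)^2+9(m-P(n))$ are the three-variable analogue of the paper's factorization $(3(4-3n)p+9q+(4-3n)^2)(3p-4)=l$; both are correct, and your verification that $c\neq0$ via $3\nmid 4$ mirrors the paper's implicit use of $3p-4\neq0$.)

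Where you genuinely diverge is in how the count is converted into the $H^\epsilon$ bound, and your route is the cleaner of the two. The paper truncates to frequencies $|n_i|\le N$, bounds $\sup_{n,j}r^{(N)}_{n,j}\lesssim N^\epsilon$ — which forces the case analysis of Lemmas \ref{rnj2} and \ref{rnj3} for the regimes $|n|\gtrless N^2$, $|j|\gtrless N^6$ (the weakest part of the paper's argument) — and then recovers $H^{2\epsilon}$ regularity by a dyadic decomposition in frequency with Minkowski and Cauchy--Schwarz. You instead prove the uniform global bound $D(n,m)\le C_\epsilon(1+|n|+|m|)^\epsilon$, keep it inside the sum, and trade it for the per-triple weight $(\langle n_1\rangle\langle n_2\rangle\langle n_3\rangle)^{\epsilon}$ using $1+|n|+|m|\lesssim(\langle n_1\rangle\langle n_2\rangle\langle n_3\rangle)^3$ on the support; the sum then decouples immediately into $\bigl(\sum_n\langle n\rangle^\epsilon|\widehat u_0(n)|^2\bigr)^3$. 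This buys you a shorter proof with no truncation parameter, no boundary-regime lemmas, and no dyadic summation, at the cost of nothing. The only caveat worth flagging is minor: your reading of the ``restatement'' takes $f$ time-independent, whereas the notation $C_tH^\epsilon_x$ suggests a time-dependent $f$, for which the claim does not follow by freezing $t$; but the paper offers no proof of that version either, so this is a defect of the statement rather than of your argument.
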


\begin{corollary}\label{lrhxstr}
Let $u$ be as in (\ref{1dlinsol}). For $r\geq 6$,
$$
\|u\|_{L^{r}_{x, t}} \leq C_{\epsilon, r} \|u_0\|_{H^{\frac{1}{4} - \frac{3}{2r} + \epsilon}}.
$$
\end{corollary}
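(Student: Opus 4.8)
The plan is to obtain Corollary \ref{lrhxstr} from Theorem \ref{str} by a Littlewood--Paley decomposition followed by interpolation against an elementary high-exponent endpoint. First I would write $u_0 = \sum_N P_N u_0$ over dyadic $N \geq 1$, where $P_N$ projects onto the Fourier modes $|n| \sim N$, and set $u_N = e^{it\mathcal{L}}P_N u_0$. Since the linear propagator commutes with every $P_N$, the triangle inequality reduces the problem to bounding each $\|u_N\|_{L^r_{x,t}}$ and summing over $N$. For the blocks I would use two ingredients. On the one hand, Theorem \ref{str} applied to $P_N u_0$ gives, for every $\delta > 0$,
\[
\|u_N\|_{L^6_{x,t}(\mathbb{T}^2)} \lesssim_\delta \|P_N u_0\|_{H^\delta} \lesssim N^{\delta}\|P_N u_0\|_{L^2}.
\]
On the other hand, because there are only $O(N)$ frequencies in the block, the trivial pointwise bound $|u_N(x,t)| \leq \sum_{|n| \sim N}|\widehat{u_0}(n)|$ together with Cauchy--Schwarz gives $\|u_N\|_{L^\infty_{x,t}} \lesssim N^{1/2}\|P_N u_0\|_{L^2}$.

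Next I would interpolate. For $r \geq 6$, Hölder's inequality in $(x,t)$ with exponents $6/r$ and $1 - 6/r$ yields
\[
\|u_N\|_{L^r_{x,t}} \leq \|u_N\|_{L^6_{x,t}}^{6/r}\,\|u_N\|_{L^\infty_{x,t}}^{1-6/r} \lesssim_\delta N^{\sigma(r)+\delta}\,\|P_N u_0\|_{L^2},
\]
a power estimate that reduces to Theorem \ref{str} at $r = 6$ (where $\sigma(6) = 0$). It then remains to reassemble: writing $N^{\sigma(r)+\delta} = N^{-\delta}N^{\sigma(r)+2\delta}$ and using Cauchy--Schwarz over the dyadic scales to absorb the factor $N^{-\delta}$ (so that $\sum_N N^{-2\delta} < \infty$), one gets
\[
\|u\|_{L^r_{x,t}} \leq \sum_N \|u_N\|_{L^r_{x,t}} \lesssim_{\epsilon,r} \|u_0\|_{H^{\sigma(r)+\epsilon}},
\]
which is the asserted inequality. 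Equivalently, this also follows from the mixed-norm restatement of Theorem \ref{str}, since $\|u_0\|_{C_t H^\epsilon_x} = \|u_0\|_{H^\epsilon_x}$ when $u_0$ is independent of $t$.

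The one step that needs care is making $\sigma(r)$ as small as possible, i.e.\ choosing the high-exponent endpoint efficiently. The purely pointwise $L^\infty$ bound above already produces a valid exponent of the right shape, but it is wasteful: $u_N$ is as large as $N^{1/2}\|P_N u_0\|_{L^2}$ only on the thin part of $\mathbb{T}^2$ where many modes are nearly in phase. To recover the precise exponent $\tfrac14 - \tfrac{3}{2r}$ of the corollary I would instead feed into the interpolation the loss-free identity $\|u_N(x,\cdot)\|_{L^2_t} = \sqrt{2\pi}\,\|P_N u_0\|_{L^2}$ --- valid for every $x$ because $n \mapsto P(n)$ is strictly increasing, hence injective, on $\mathbb{Z}$ --- possibly in combination with the $L^4_{x,t}$ Strichartz estimate mentioned in the abstract and, for even values of $r$, with the attendant counting bounds for the number of solutions of $n_1 + \cdots + n_k = m$, $P(n_1) + \cdots + P(n_k) = \mu$ with $|n_i| \sim N$. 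Once this endpoint bookkeeping is settled the dyadic summation closes routinely; it is this endpoint analysis, not the interpolation scheme itself, that is the main obstacle.
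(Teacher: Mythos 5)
Your interpolation scheme is in substance the same as the paper's, just packaged differently. The paper generalizes Lemma \ref{partsum}: for even exponents $2m$ it counts solutions of the $m$-fold resonance system $n_1+\dots+n_m=n$, $P(n_1)+\dots+P(n_m)=j$ by letting $m-3$ of the variables range freely (a trivial factor $N^{m-3}$) and applying Lemma \ref{diofinal} to the remaining two, which gives $\|S_Nu\|_{L^{2m}_{x,t}}\lesssim N^{\frac{m-3+\epsilon}{2m}}\|S_Nu_0\|_{L^2_x}$, and then interpolates (Riesz--Thorin) between $L^6$ and $L^{2m}$. Since the extra variables are counted trivially, that endpoint carries exactly the same information as your Cauchy--Schwarz bound $\|u_N\|_{L^\infty_{x,t}}\lesssim N^{1/2}\|P_Nu_0\|_{L^2}$ (note $\frac{m-3}{2m}\to\frac12$ as $m\to\infty$), and both routes land on the identical block estimate $\|u_N\|_{L^r_{x,t}}\lesssim N^{\frac12-\frac3r+\epsilon}\|P_Nu_0\|_{L^2}$ before the dyadic summation. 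Up to that point your argument is correct and, if anything, more economical than the paper's.

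The gap you flag in your last paragraph is real, but it is not closable, and you should not expect to reach the stated index. Summing $N^{\frac12-\frac3r+\epsilon}$ over dyadic blocks yields $\|u\|_{L^r_{x,t}}\lesssim\|u_0\|_{H^{\frac12-\frac3r+\epsilon}}$, and this --- not $H^{\frac14-\frac{3}{2r}+\epsilon}$ --- is also all that the paper's own proof delivers: its final sentence folds the factor $N^{\frac12-\frac3r+\epsilon}$ into the sum, which produces the Sobolev index $\frac12-\frac3r+\epsilon$. Moreover the index as printed cannot be correct for all $r\ge6$: for $u_0=\sum_{|n|\le N}e^{inx}$ the free solution satisfies $|u|\gtrsim N$ on the box $|x|\lesssim N^{-1}$, $|t|\lesssim N^{-3}$, so $\|u\|_{L^r_{x,t}}\gtrsim N^{1-\frac4r}$, while $\|u_0\|_{H^{\frac14-\frac{3}{2r}}}\sim N^{\frac34-\frac{3}{2r}}$, and $1-\frac4r>\frac34-\frac{3}{2r}$ as soon as $r>10$. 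Consequently your proposed refinements cannot succeed: the pointwise-in-$x$ identity $\|u_N(x,\cdot)\|_{L^2_t}=\sqrt{2\pi}\,\|P_Nu_0\|_{L^2}$ only reproduces Plancherel and sits on the low-exponent side of the interpolation, and the $L^4$ estimate of Theorem \ref{L4Ysb} lies below $L^6$ and so gives a worse high-$r$ exponent. The honest conclusion is that the exponent in the Corollary should read $\frac12-\frac3r+\epsilon$, which is precisely what the part of your argument you actually carried out proves.
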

Section \ref{2} will be devoted to the proof of Theorem \ref{str} and Corollary \ref{lrhxstr}. The method closely follows that of Bourgain \cite{Bourgain1993}.

\begin{theorem}\label{L4Ysb}
The second of our main results is known Bourgain's $L^4$ inequality for the Dysthe equation:
$$\|f\|_{L^4_{x,t}(\mathbb{T}^2)} \lesssim \|f\|_{X^{0, \frac{1}{3}}(\mathbb{T}^2)}$$
\end{theorem}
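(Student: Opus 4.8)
\section*{Proof proposal for Theorem \ref{L4Ysb}}

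The plan is to follow Bourgain's scheme for periodic KdV, which transfers to the present setting because the Dysthe dispersion relation $P(n)=n^{3}-2n^{2}+8n$ has the same cubic leading term as $n^{3}$. Since $\|f\|_{L^{4}_{x,t}}^{2}=\|f^{2}\|_{L^{2}_{x,t}}$, Plancherel turns the claim into an $\ell^{2}$ estimate on
\[
\widehat{f^{2}}(N,T)=\sum_{\substack{n_{1}+n_{2}=N\\ t_{1}+t_{2}=T}}\widehat f(n_{1},t_{1})\,\widehat f(n_{2},t_{2}),
\]
to be controlled by $\|f\|_{X^{0,1/3}}^{2}=\sum_{n,t}\langle t-P(n)\rangle^{2/3}|\widehat f(n,t)|^{2}$. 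Writing $\widehat f(n,t)=\langle t-P(n)\rangle^{-1/3}F(n,t)$ with $F\in\ell^{2}$, the problem becomes a weighted lattice-point count, and the algebraic fact that drives everything is the resonance identity
\[
\bigl(T-P(N)\bigr)-\bigl(t_{1}-P(n_{1})\bigr)-\bigl(t_{2}-P(n_{2})\bigr)=P(n_{1})+P(n_{2})-P(N)=n_{1}n_{2}(4-3N)
\]
(valid when $N=n_{1}+n_{2}$, $T=t_{1}+t_{2}$). Since $4-3N$ never vanishes on $\mathbb{Z}$, and in fact $|4-3N|\gtrsim\langle N\rangle$, this resonance is genuinely large away from the degenerate modes $n_{1}=0$ and $n_{2}=0$.

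The easy half is the loss-free Strichartz estimate $\|e^{it\mathcal{L}}\phi\|_{L^{4}_{x,t}}\lesssim\|\phi\|_{L^{2}_{x}}$: on a free solution the time frequency is pinned to $t_{j}=P(n_{j})$, so the inner sum defining $\widehat{(e^{it\mathcal{L}}\phi)^{2}}(N,T)$ runs over $\{n_{1}+n_{2}=N,\ P(n_{1})+P(n_{2})=T\}$, and the resonance identity recasts the second condition as the quadratic equation $n_{1}(N-n_{1})=(T-P(N))/(4-3N)$ in $n_{1}$, which has at most two solutions; Cauchy--Schwarz then closes with no loss. The substance of the theorem is upgrading this to the $X^{0,1/3}$ statement. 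For that I would split $f=\sum_{L}f_{L}$ dyadically in $\langle t-P(n)\rangle\sim L$, expand $f^{2}=\sum_{L_{1},L_{2}}f_{L_{1}}f_{L_{2}}$, and estimate each $\|f_{L_{1}}f_{L_{2}}\|_{L^{2}}$ by Cauchy--Schwarz on the hyperplane $\{n_{1}+n_{2}=N,\ t_{1}+t_{2}=T\}$, using that: (i) for fixed $(N,T)$ the resonance identity pins $n_{1}(N-n_{1})(4-3N)$ to an interval of length $O(\max(L_{1},L_{2}))$, so $n_{1}$ ranges over $O\bigl(1+(\max(L_{1},L_{2})/\langle N\rangle)^{1/2}\bigr)$ values by the quadratic-preimage bound; (ii) once $n_{1}$ is fixed, $t_{1}$ is confined to an interval of length $O(\min(L_{1},L_{2}))$; and (iii) in the unbalanced regime, where the resonance is small compared with $\max(L_{1},L_{2})$, the output modulation $\langle T-P(N)\rangle$ is forced to be comparable to $\max(L_{1},L_{2})$, which is what keeps the ensuing dyadic summation convergent. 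Matching the resulting geometric-type series in $(L_{1},L_{2})$ against $\sum_{L}L^{2/3}\|f_{L}\|_{L^{2}}^{2}$, one finds the exponents are designed to close at precisely $b=\tfrac13$, which is the origin of that constant.

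The degenerate modes are disposed of directly: when $n_{1}=0$ or $n_{2}=0$ one factor depends on $t$ only and the contribution to $\widehat{f^{2}}$ is essentially a one-dimensional convolution in the time variable, which can be handled by Cauchy--Schwarz/Young in $t$ together with the modulation weight (using also the $\langle T-P(N)\rangle$-localization on the output when the output modulation is large), and similarly for any remaining low-resonance configurations. The step I expect to be the real obstacle is the dyadic bookkeeping in the preceding paragraph: simultaneously tracking the lattice-point count, the length of the admissible $t_{1}$-interval, the three modulation weights, and the output-modulation gain, and verifying that the dyadic sums close exactly --- not merely up to an $\epsilon$ --- at $b=\tfrac13$. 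That last margin is the delicate point in Bourgain's KdV argument, and it is inherited here because the Dysthe phase is a cubic with the same leading coefficient.
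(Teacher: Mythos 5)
Your overall strategy -- Plancherel, dyadic decomposition in the modulation $\langle \tau - P(n)\rangle$, Cauchy--Schwarz on the hyperplane, and a lattice-point count driven by the resonance identity $P(n)-P(n_1)-P(n_2)=n_1n_2(3n-4)$ with $|3n-4|\gtrsim\langle n\rangle$ -- is exactly the paper's (Lemma \ref{AuxYsb} and its use in the proof of Theorem \ref{L4Ysb}). But the specific counting scheme you propose has a genuine gap at low output frequency, precisely in the "dyadic bookkeeping" you flag. Your ingredient (i) gives at most $O\bigl(1+(L_{\max}/\langle N\rangle)^{1/2}\bigr)$ admissible values of $n_1$; when $\langle N\rangle\sim 1$ this is $O(L_{\max}^{1/2})$, so together with (ii) the count per $(N,T)$ is $L_{\min}L_{\max}^{1/2}$ and Cauchy--Schwarz yields $\|f_{L_1}f_{L_2}\|_{L^2}\lesssim L_{\min}^{1/2}L_{\max}^{1/4}\|f_{L_1}\|_{L^2}\|f_{L_2}\|_{L^2}$. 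The total modulation exponent here is $3/4>2/3$ (on the diagonal $L_1=L_2=L$ you get $L^{3/4}$ against the available $L^{2/3}$), so the dyadic sum does \emph{not} close at $b=\tfrac13$; it would only close at $b=\tfrac38$. Your ingredient (iii), the forced largeness of the output modulation, cannot rescue this, because the right-hand side $\|f\|_{X^{0,1/3}}$ carries no weight on the output modulation -- that gain is relevant for the bilinear estimates with a $\sigma^{-1/2}$ output weight (Theorem \ref{bilinear}), not here.

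The missing idea is a frequency split on the output variable. The paper separates $|N|\gtrsim L_{\max}^{1/3}$ (where your count (i) gives $O(L_{\max}^{1/3})$ values of $n_1$ and everything closes with a $(L_{\min}/L_{\max})^{1/6}$ margin) from $|N|\ll L_{\max}^{1/3}$, where it abandons the resonance count entirely: Cauchy--Schwarz in $\tau_1$ alone costs $L_{\min}^{1/2}$, the convolution in $n$ is handled by Cauchy--Schwarz in $n_1$, and one pays only the number $O(L_{\max}^{1/3})$ of output frequencies, i.e.\ a factor $L_{\max}^{1/6}$ after taking the square root. Both regimes then give $L_{\min}^{1/2}L_{\max}^{1/6}=(L_{\min}L_{\max})^{1/3}(L_{\min}/L_{\max})^{1/6}$, which is what makes $b=\tfrac13$ work. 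Two smaller remarks: the degenerate modes $n_1=0$ or $n_2=0$ need no special treatment for this theorem, since they contribute at most one lattice point each to the count (they matter for Theorem \ref{bilinear}, where the projection $\mathbb{P}$ is imposed for that reason); and your "easy half" (the loss-free $L^4$ bound for free solutions) is correct but is not used in the paper's argument.
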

The definition of the Bourgain spaces $X^{s,b}$ and $Y^{s,b}$ corresponding to the dispersive relation $P(n)$ will be given in Section \ref{3} in this paper, and the whole section will be devoted to the proof of this theorem. The theorem was originally proved by Bougain in \cite{Bourgain1993} for the Airy equation. There are a few key differences with the proof for the linearized Dysthe equation. Section \ref{3} will focus on this proof and other linear inequalities.

The third of our main results is a bilinear estimate in the Bourgain space. Define $Z^{s, b} = X^{s, b} \cap Y^{s, b-\frac{1}{2}}$.
\begin{theorem}\label{bilinear}
For any $s\geq \frac{1}{2}$
$$
\|\mathbb{P}(u_1)\mathbb{P}(u_2)\|_{Z^{s, \frac{-1}{2}}} \lesssim \|u_1\|_{Z^{s-1, \frac{1}{2}}}\|u_2\|_{Z^{s-1, \frac{1}{3}}} + \|u_1\|_{Z^{s-1, \frac{1}{3}}}\|u_2\|_{Z^{s-1, \frac{1}{2}}}.
$$
Here the operator $\mathbb{P}$ is the orthogonal projection to the zero-mean-value subspace of $L^2_x(\mathbb{T})$.
\end{theorem}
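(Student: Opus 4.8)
The plan is to move to the Fourier side and exploit the resonance structure of the dispersion relation (\ref{dispersive}). If $n = n_1 + n_2$, a direct computation gives
\[
P(n) - P(n_1) - P(n_2) = 3 n_1 n_2 (n_1+n_2) - 4 n_1 n_2 = n_1 n_2 \,(3n - 4) .
\]
Since $\mathbb{P}(u_1), \mathbb{P}(u_2)$ are supported on frequencies $n_1, n_2 \neq 0$, and since $3n - 4 \neq 0$ for every integer $n$ (indeed $|3n-4| \gtrsim \langle n\rangle$), this yields the nondegeneracy bound $|P(n) - P(n_1) - P(n_2)| \gtrsim \langle n_1\rangle \langle n_2\rangle \langle n\rangle$. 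Writing $\sigma_j = \tau_j - P(n_j)$ for $j=1,2$ and $\sigma_3 = \tau - P(n)$, we have $\sigma_1 + \sigma_2 - \sigma_3 = P(n)-P(n_1)-P(n_2)$ on the set $\{n_1+n_2 = n,\ \tau_1+\tau_2 = \tau\}$, so the largest of $\langle\sigma_1\rangle, \langle\sigma_2\rangle, \langle\sigma_3\rangle$ is always $\gtrsim \langle n_1\rangle\langle n_2\rangle\langle n\rangle$. This three-derivative smoothing is exactly what permits gaining one derivative in the product of two functions of regularity $s-1$.

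First I would treat the $X^{s,-1/2}$ component of the $Z$-norm. By duality, $\|\,\mathbb{P}(u_1)\mathbb{P}(u_2)\,\|_{X^{s,-1/2}} = \sup \{ |\langle \mathbb{P}(u_1)\mathbb{P}(u_2), v\rangle_{L^2_{x,t}}| : \|v\|_{X^{-s,1/2}}\le 1 \}$. Expanding the pairing by Plancherel and setting $f_j = \langle n_j\rangle^{s-1}\langle\sigma_j\rangle^{b_j}|\widehat{u_j}|$ for $j=1,2$ and $f_3 = \langle n\rangle^{-s}\langle\sigma_3\rangle^{1/2}|\widehat{v}|$ — so that $\|f_j\|_{L^2} = \|u_j\|_{X^{s-1,b_j}}$ and $\|f_3\|_{L^2} = \|v\|_{X^{-s,1/2}}$ — reduces the problem to bounding a trilinear sum–integral over $\{n_1+n_2 = n,\ \tau_1+\tau_2 = \tau\}$ with multiplier
\[
\frac{\langle n\rangle^s \langle n_1\rangle^{1-s}\langle n_2\rangle^{1-s}}{\langle\sigma_1\rangle^{b_1}\langle\sigma_2\rangle^{b_2}\langle\sigma_3\rangle^{1/2}} .
\]
Using $\langle n\rangle \lesssim \max(\langle n_1\rangle, \langle n_2\rangle)$ together with the resonance bound — distinguishing the case where $\langle n\rangle$ is comparable to $\max(\langle n_1\rangle,\langle n_2\rangle)$ from the case where it is much smaller (in which the two input frequencies are comparable) — one verifies the elementary inequality, and this is the step that forces $s \ge \tfrac12$,
\[
\langle n\rangle^s \langle n_1\rangle^{1-s}\langle n_2\rangle^{1-s} \lesssim \big( \max(\langle\sigma_1\rangle,\langle\sigma_2\rangle,\langle\sigma_3\rangle) \big)^{1/2}.
\]
Then one splits the summation into three regions according to which modulation is largest: if $\langle\sigma_3\rangle$ or $\langle\sigma_1\rangle$ dominates, take $b_1 = \tfrac12$, $b_2 = \tfrac13$ (this produces the first product on the right of the theorem); if $\langle\sigma_2\rangle$ dominates, take $b_1 = \tfrac13$, $b_2 = \tfrac12$ (the second product). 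In each region, absorbing one half-power of the dominant modulation into the frequency factor leaves a multiplier of the shape $\langle\sigma_i\rangle^{-1/2}\langle\sigma_j\rangle^{-1/3}$ on the two non-dominant indices, the dominant factor now being weight-free. The resulting trilinear form is estimated by Hölder as $L^4 \cdot L^4 \cdot L^2$: the weight-free factor goes into $L^2$ (Plancherel), and each of the two weighted factors goes into $L^4$ via Bourgain's $L^4$ inequality, Theorem \ref{L4Ysb} — the exponent $\tfrac13$ is precisely what that theorem asks for, and the $\tfrac12$ on the other factor leaves a spare $\langle\sigma\rangle^{1/6}$. Summing the three regions and using $\|f_j\|_{L^2}\le \|u_j\|_{Z^{s-1,b_j}}$ and $\|f_3\|_{L^2}\le\|v\|_{Z^{-s,1/2}}$ gives the $X^{s,-1/2}$ half of the claim.

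The $Y^{s,-1}$ component — the $\ell^2_n L^1_\tau$ estimate with weight $\langle n\rangle^s\langle\sigma_3\rangle^{-1}$ — is handled in the same spirit, but is more delicate, and I expect it to be the main obstacle. Its saving grace is that here the \emph{full} power $\langle\sigma_3\rangle^{-1}$ is available: on the region where $\langle\sigma_3\rangle$ dominates the resonance one has $\langle n\rangle^s\langle\sigma_3\rangle^{-1} \lesssim \langle n_1\rangle^{-s}\langle n_2\rangle^{-s}$ (for $s\le 1$; an analogous bound holds for larger $s$), and then Young's inequality in $\tau$, applied to $\widehat{u_1},\widehat{u_2}$ through their $Y^{s-1,0}$-norms, followed by Cauchy–Schwarz in $n_1$, reduces matters to boundedness of $\sum_{n_1}\langle n_1\rangle^{-2s}\langle n-n_1\rangle^{-2s}$ in $n$, which holds for all $s\ge\tfrac12$ (with only a harmless logarithm at the endpoint). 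On the complementary region one of $\langle\sigma_1\rangle,\langle\sigma_2\rangle$ carries the resonance, and one argues as in the $X$-case, using the $X^{s-1,1/2}$ component of the relevant factor's $Z$-norm to absorb the frequency weight while bounding $\langle\sigma_3\rangle^{-1}\le 1$. Two points require the most attention: verifying the elementary multiplier inequality sharply at $s=\tfrac12$ (which is what pins down the hypothesis), and the bookkeeping of whether to spend the exponent $\tfrac12$ or $\tfrac13$ on $u_1$ versus $u_2$ according to which of $\langle\sigma_1\rangle,\langle\sigma_2\rangle,\langle\sigma_3\rangle$ is the resonant modulation — this asymmetry is exactly why the right-hand side of the theorem is a sum of two mixed products rather than a single symmetric term.
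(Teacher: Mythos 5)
Your argument for the $X^{s,-1/2}$ component is essentially the paper's: same resonance identity $P(n)-P(n_1)-P(n_2)=n_1n_2(3n-4)$ with $|3n-4|\gtrsim\langle n\rangle$, same bound $\max(\langle\sigma_1\rangle,\langle\sigma_2\rangle,\langle\sigma_3\rangle)\gtrsim\langle n\rangle\langle n_1\rangle\langle n_2\rangle$, same use of $\langle n\rangle^{s-1/2}\lesssim\langle n_1\rangle^{s-1/2}\langle n_2\rangle^{s-1/2}$ (this is exactly where the paper also uses $s\geq\frac12$), same trichotomy on the dominant modulation, and the same $L^4\cdot L^4\cdot L^2$ H\"older step fed by Theorem \ref{L4Ysb}. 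The only cosmetic difference is that you run duality uniformly in all three cases, whereas the paper treats the $\sigma$-dominant case directly as $\|F_1F_2\|_{L^2}$; both close, and your bookkeeping of the exponents $\frac12$ versus $\frac13$ reproduces the two mixed products on the right-hand side.

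The $Y^{s,-1}$ component, which you correctly identify as the delicate half, has two soft spots as written. First, on the resonant region your displayed bound $\langle n\rangle^s\langle\sigma_3\rangle^{-1}\lesssim\langle n_1\rangle^{-s}\langle n_2\rangle^{-s}$ is too weak at $s=\frac12$: after extracting the $\langle n_i\rangle^{s-1}$ weights for the $Y^{s-1,0}$ norms it leaves the kernel $\langle n_1\rangle^{1-2s}\langle n_2\rangle^{1-2s}$, which is identically $1$ at the endpoint and not summable. You must keep the full strength $\langle n\rangle^s\langle\sigma_3\rangle^{-1}\lesssim\langle n\rangle^{s-1}\langle n_1\rangle^{-1}\langle n_2\rangle^{-1}$, which does yield your stated kernel $\sum_{n_1}\langle n_1\rangle^{-2s}\langle n-n_1\rangle^{-2s}$ and closes for $s\geq\frac12$; so the fix is available but the inequality you wrote is not the one you need. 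Second, and more seriously, on the complementary region the instruction ``bound $\langle\sigma_3\rangle^{-1}\leq1$'' destroys the only source of $\ell^1_\tau$ summability in the output variable: Young's inequality in $\tau$ then charges $\|\langle n_1\rangle^{s-1}\sigma_1^{1/2}\widehat{u_1}\|_{\ell^1_{\tau_1}}$, a $Y^{s-1,\frac12}$-type quantity that is \emph{not} controlled by $Z^{s-1,\frac12}=X^{s-1,\frac12}\cap Y^{s-1,0}$. The correct move — which is what the paper does — is to spend only part of the weight, using Cauchy--Schwarz in $\tau$ to trade $\|\cdot\|_{\ell^1_\tau}$ for $\|\langle\sigma_3\rangle^{\frac12+\delta}\cdot\|_{\ell^2_\tau}$ (this is Lemma \ref{ysbxsbembed}), after which one is left with the multiplier $\langle\sigma_3\rangle^{-\frac12+\delta}$ and the $X$-case argument applies verbatim. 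With those two repairs your outline matches the paper's proof.
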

Section \ref{4} will be devoted to the proof of Theorem \ref{bilinear} and other multilinear estimates. Such results were initially obtained by Bourgain in \cite{Bourgain1993} and later extended by Kenig-Ponce-Vega \cite{KenigPonceVega1996} and Colliander-Keel-Staffilani-Takaoka-Tao \cite{CKSTT} for the KdV equation.

\begin{theorem}\label{ill}
The initial value problem for the 1D periodic Dysthe equation is ill-posed in $H^s(\mathbb{T})$ for $s<0$.
\end{theorem}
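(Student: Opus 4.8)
\emph{Proof proposal.} The plan is to exhibit an explicit family of plane-wave solutions of (\ref{1deq}) whose nonlinear phase depends on the amplitude, and then to run the standard non-uniform-dependence argument: two such solutions launched from data that are close in $H^{s}$ (with $s<0$) drift an $O(1)$ distance apart in $H^{s}$ after an arbitrarily short time, which no continuous-dependence statement can tolerate. Since the plane waves are genuine smooth solutions of (\ref{1deq}), any hypothetical well-posed flow in $H^{s}(\mathbb{T})$ must contain them and will inherit this contradiction, giving ill-posedness.

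\emph{Step 1: plane waves.} First I would insert the single-mode ansatz $u(x,t)=a(t)e^{inx}$ into (\ref{1deq}). Since $|u(x,t)|^{2}=|a(t)|^{2}$ is independent of $x$, the nonlocal term $u|\partial_{x}||u|^{2}$ vanishes (the Fourier multiplier $|n|$ annihilates the constant mode), and $|u|^{2}u$, $|u|^{2}\partial_{x}u$ and $u^{2}\partial_{x}u^{*}$ are each scalar multiples of $e^{inx}$. Combined with the linear part (whose symbol is $P(n)$, cf. (\ref{dispersive}) and (\ref{1dlinsol})), the PDE collapses to the scalar ODE
$$
a'(t)=i\bigl(P(n)-(c_{1}+c_{2}n)\,|a(t)|^{2}\bigr)a(t),
$$
where $c_{1},c_{2}$ are explicit real constants read off from the coefficients of $\mathcal{N}$ in (\ref{Nonlinear}); a direct computation gives $c_{1}\neq0$ and $c_{2}\neq0$ (and either one being nonzero already suffices below). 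As the bracket is real, $|a(t)|$ is conserved, so the initial datum $u_{0}=a\,e^{inx}$ produces the honest smooth solution
$$
u(x,t)=a\,e^{inx}\,e^{\,it\,\omega_{n}(|a|^{2})},\qquad \omega_{n}(r):=P(n)-(c_{1}+c_{2}n)\,r .
$$

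\emph{Step 2: decorrelation.} Fix a small $\delta>0$, and recall $\|c\,e^{iNx}\|_{H^{s}}=|c|(1+N^{2})^{s/2}\sim|c|\,N^{-|s|}$ for $N\geq1$. For each integer $N\geq1$ set
$$
u_{0,N}:=\delta\,N^{-s}e^{iNx},\qquad v_{0,N}:=u_{0,N}+e^{iNx}=(\delta N^{-s}+1)e^{iNx}.
$$
Then $\|u_{0,N}\|_{H^{s}}\sim\delta$ and $\|v_{0,N}\|_{H^{s}}\sim\delta+N^{-|s|}\sim\delta$ are bounded uniformly in $N$, while $\|u_{0,N}-v_{0,N}\|_{H^{s}}=\|e^{iNx}\|_{H^{s}}\sim N^{-|s|}\to0$. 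By Step 1 the two solutions are $u_{N}=\delta N^{-s}e^{iNx}e^{it\omega_{N}(r_{u})}$ and $v_{N}=(\delta N^{-s}+1)e^{iNx}e^{it\omega_{N}(r_{v})}$, with $r_{u}=\delta^{2}N^{-2s}$, $r_{v}=(\delta N^{-s}+1)^{2}$, so $r_{v}-r_{u}=2\delta N^{-s}+1\sim2\delta N^{|s|}$. Their nonlinear phases differ by $\Delta_{N}(t):=t\bigl(\omega_{N}(r_{v})-\omega_{N}(r_{u})\bigr)=-t\,(c_{1}+c_{2}N)(r_{v}-r_{u})$, and because $|c_{1}+c_{2}N|\gtrsim1$ we get $|\Delta_{N}(t)/t|\gtrsim 2\delta N^{|s|}\to\infty$ (here $s<0$ is used). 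Consequently there are times $t_{N}\to0$ with $\Delta_{N}(t_{N})\equiv\pi\pmod{2\pi}$, and at such a time $u_{N}(\cdot,t_{N})-v_{N}(\cdot,t_{N})=c_{N}\,e^{iNx}\,(2\delta N^{-s}+1)$ for some unimodular $c_{N}$, hence $\|u_{N}(t_{N})-v_{N}(t_{N})\|_{H^{s}}\sim(2\delta N^{|s|}+1)N^{-|s|}\to2\delta$.

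\emph{Step 3: conclusion and main difficulty.} Since $\delta>0$ is arbitrary and $t_{N}\to0$, no solution map on $H^{s}(\mathbb{T})$ can be uniformly continuous on bounded sets, and in fact continuous dependence fails on every interval $[0,T]$, $T>0$; as the plane waves are bona fide smooth solutions of (\ref{1deq}), this proves Theorem \ref{ill}. The only genuine computation is Step 1: one must carefully evaluate the three derivative nonlinearities of (\ref{Nonlinear}) on $a(t)e^{inx}$, confirm the nonlocal term truly drops out, and check that the amplitude-dependent phase $(c_{1}+c_{2}n)|a|^{2}$ does not cancel. This is robust: even if those derivative terms contributed nothing, the cubic term $-\tfrac{i}{2}|u|^{2}u$ alone supplies the nonzero $n$-independent constant $c_{1}$, which paired with the blow-up factor $N^{-2s}\to\infty$ forced by $s<0$ already drives the decorrelation. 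The remaining Sobolev bookkeeping and the selection of $t_{N}$ are routine.
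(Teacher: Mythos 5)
Your argument is essentially correct, but it proves the theorem by a genuinely different route than the paper. The paper follows Grande--Kurianski--Staffilani: it takes a three-mode datum $u_0=m^{-s}(e^{-imx}+e^{-i(m-1)x}+e^{i(m+1)x})$, computes the third Picard iterate $u_3$, isolates a resonant triple with small phase $\Omega^*\sim m$, and shows $\|u_3\|_{H^s}\gtrsim m^{-2s}\to\infty$, so the bound $\|u_3\|_{L^\infty_tH^s}\lesssim\|u_0\|_{H^s}^3$ required for the Picard/contraction scheme fails; this is ill-posedness in the sense that the data-to-solution map cannot be $C^3$ (analytic) at the origin. You instead exploit the fact that the nonlinearity acts diagonally on single Fourier modes: with $u=a(t)e^{inx}$ the nonlocal term vanishes and the cubic terms contribute $i(\tfrac12+\tfrac{5n}{4})|a|^2a$, a purely imaginary multiple of $a$, so $|a|$ is conserved and one gets exact plane-wave solutions with amplitude-dependent phase $\omega_n(|a|^2)$. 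The ensuing phase-decorrelation argument (à la Burq--Gérard--Tzvetkov / Christ--Colliander--Tao for periodic NLS) is correct as you set it up, and it yields failure of uniform continuity of the flow map on bounded subsets of $H^s$ for $s<0$. That conclusion is, if anything, stronger than the paper's: non-uniform continuity on bounded sets already rules out a locally Lipschitz (hence $C^1$, hence $C^3$) flow, whereas failure of $C^3$ does not preclude uniform continuity. Your approach also has the advantage of using exact solutions, so there is no need to argue that the higher Picard iterates are negligible; the paper's approach has the advantage of not requiring explicit solutions and of generalizing to settings where no invariant single-mode ansatz exists.

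One overclaim should be corrected: in Step 3 you assert that ``continuous dependence fails on every interval $[0,T]$.'' Your construction does not show this. The two data sequences $u_{0,N}$ and $v_{0,N}$ merely approach each other; neither converges in $H^s$, so you have disproved \emph{uniform} continuity, not continuity, of the solution map. This is the standard and accepted notion of (mild) ill-posedness in the literature, and it suffices for the theorem as stated — the paper's own proof likewise establishes only a weak form of ill-posedness — but the stronger claim of discontinuity should be deleted or downgraded. Also record explicitly that $\tfrac12+\tfrac{5N}{4}\neq0$ for integer $N$, so the decorrelation coefficient never degenerates.
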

The proof of Theorem \ref{ill} will be discussed at length in Section \ref{6}. Motivations for such ill-posedness and conjectures of stronger results can be found in Section \ref{5}. Similar results were shown recently in \cite{GrandeKurianskiStaffilani2020} for the Dysthe equation in different spatial dimensions and settings.

\subsection{Obstructions and open questions}
We did not manage to prove local well-posedness of the full nonlinear Dysthe equation (\ref{duhamelrep}), but we did obtain many important estimates that could help to show local well-posedness or other properties. 

Superficially, the 1D Dysthe equation looks similar to a KdV type equation
$$
\partial_tu-\partial_x^3u=\partial_x[F(u)].
$$
This suggests that three well-known methods for well-posedness of a KdV type equation, developed by mathematicians over the last half-century, could be applied: the viscosity method, the harmonic analysis approach, and the dynamical system approach. we will be focusing on the harmonic analysis approach, while still pointing out that all these three methods might fail for the periodic 1D Dysthe equation. The dynamical system approach is very specific to KdV so it is not applicable to the Dysthe equation, since the latter is not integrable. We will suggest in Section \ref{5} that the viscosity method fails for the Dysthe equation because the nonlocal term $i u|\partial_x||u|^2$ does not allow us to make an adequate a priori energy estimate.

The harmonic analysis approach is the most promising. However, the major difference between the KdV equation and 1D Dysthe equation in this case is that, there is a conservation of mean for the complex envelope for the former, which is the crucial property used by Bourgain \cite{Bourgain1993}, Kenig-Ponce-Vega \cite{KenigPonceVega1996} and Colliander-Keel-Staffilani-Takaoka-Tao \cite{CKSTT} to establish the necessary bilinear (trilinear) estimates to control the nonlinear term of the KdV type equations. Such arguments simply fail for the Dysthe equation. However, we was still able to obtain some crucial estimates on trilinear terms such as in lemma \ref{trilinear1}. We shall explain the details of this by the end of Section \ref{5}.

These results suggest that, in order to prove local well-posedness for the 1D periodic Dysthe equation, one might have to develop a completely new method.

\section{Estimates of the linearized equation}\label{2}

\subsection{Number of Resonant Frequencies}
To obtain Strichartz type inequalities for the one dimensional linearized Dysthe equation (\ref{1dlin}), some number-theoretical results will be needed.

\begin{definition}\label{rnj}
Let $n,j$ be integers and let $N$ be a natural number. Let 
$P(n)$ be given by (\ref{dispersive}),
i.e. the dispersive relation of the linear Dysthe equation. Set $r_{n,j}^{(N)}$ to be the number of ordered solutions $(n_1,n_2)$ over $\mathbb{Z}^2$ to the Diophantine equation
\begin{equation}\label{diophant}
P(n_1) + P(n_2) + P(n-n_1-n_2) = j
\end{equation}
with $|n_1|, |n_2| \leq  N$.
\end{definition}

\begin{lemma} \label{rnj1}
Let $N\in\mathbb{N}$ and $n,j\in\mathbb{Z}$ such that
$$
|n|\leq N^2,
\quad
|j|\leq N^6.
$$
Then
$$
r_{n,j}^{(N)} \leq C_{\epsilon} N^{\epsilon} 
\quad
\forall \epsilon >0,
$$
where the constant $C_{\epsilon}$ does not depend on $n,j, N$.
\end{lemma}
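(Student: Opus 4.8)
The plan is to reduce the counting of solutions of the cubic Diophantine equation to a divisor-counting problem, exploiting the fact that $P$ is a cubic polynomial whose leading behavior dominates. First I would substitute and expand: writing $n_3 = n - n_1 - n_2$, the condition $P(n_1) + P(n_2) + P(n_3) = j$ with $n_1 + n_2 + n_3 = n$ fixed becomes, after using the identity $a^3 + b^3 + c^3 - 3abc = (a+b+c)(a^2+b^2+c^2-ab-bc-ca)$ and collecting the lower-order terms of $P$, an equation of the shape $3 n_1 n_2 n_3 = Q(n, j)$ plus correction terms that are quadratic in the $n_i$ but come multiplied by the fixed quantity $n$ or constants. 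More precisely, since $\sum P(n_i) = \sum n_i^3 - 2\sum n_i^2 + 8 \sum n_i$ and $\sum n_i = n$ is fixed, the sum $\sum n_i^3$ is determined up to the symmetric functions, and one can massage the system into the form $n_1 n_2 n_3 = m$ for an integer $m = m(n,j)$ with $|m| \lesssim N^6$, together with $n_1 + n_2 + n_3 = n$ and one more relation fixing $n_1 n_2 + n_2 n_3 + n_3 n_1$; the three elementary symmetric functions of $n_1, n_2, n_3$ being determined, the triple $(n_1,n_2,n_3)$ is a root of a fixed cubic with integer coefficients.

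The key step is then the following: once $e_1 = n_1+n_2+n_3$, $e_2 = n_1 n_2 + n_2 n_3 + n_3 n_1$, and $e_3 = n_1 n_2 n_3$ are all pinned down by $(n,j)$, the unordered triple $\{n_1, n_2, n_3\}$ is uniquely determined as the multiset of roots of $X^3 - e_1 X^2 + e_2 X - e_3$, so there are at most $6$ ordered solutions — giving the bound $r_{n,j}^{(N)} \le 6$, far stronger than needed. However, I expect the honest situation to be subtler: the relation $\sum P(n_i) = j$ together with $\sum n_i = n$ gives only \emph{two} equations in three unknowns, so it fixes $e_1$ and one combination of $e_2, e_3$, but not both separately. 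So the real reduction is to an equation of the form $n_1 n_2 n_3 + (\text{lower order in } e_2) = m(n,j)$; fixing $e_1 = n$ and eliminating, one lands on a two-variable equation which, after completing cubes/squares, takes the form $AB = D$ where $A, B$ are affine-integer combinations of $n_1, n_2$ (of size $O(N)$ and $O(N^2)$ respectively) and $|D| \lesssim N^6$ is determined by $n, j$. The number of factorizations of a nonzero integer $D$ with $|D| \le N^6$ into ordered pairs is $O(d(D)) = O_\epsilon(N^\epsilon)$ by the classical divisor bound, and each factorization determines $(n_1, n_2)$ hence $n_3$; summing over the bounded number of cases coming from the elimination yields $r_{n,j}^{(N)} \le C_\epsilon N^\epsilon$.

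The main obstacle will be the degenerate case $D = 0$, where the divisor bound is vacuous: this corresponds to one of the affine factors vanishing, e.g. $n_1$ or $n_2$ or $n_1 + n_2 - n$ equal to a fixed value, which is a \emph{one-parameter} family of candidate triples and must be controlled separately. Here I would use the remaining (unused) equation — the one expressing $j$ — restricted to this one-parameter family: along the line, $j$ becomes a nonconstant polynomial (quadratic or cubic) in the single free parameter, so the equation $j = (\text{poly in the parameter})$ has at most $O(1)$ solutions unless that polynomial is identically constant, which one checks cannot happen for the specific $P(n) = n^3 - 2n^2 + 8n$ because its leading coefficient is nonzero. A careful but routine case analysis of which factor vanishes, combined with the hypotheses $|n| \le N^2$ and $|j| \le N^6$ to guarantee all the auxiliary integers stay in the range where the divisor bound applies, completes the argument. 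The only genuinely arithmetic input is the divisor bound $d(m) = O_\epsilon(m^\epsilon)$; everything else is algebraic bookkeeping.
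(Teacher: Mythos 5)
Your proposal follows essentially the same route as the paper: fix $n$, eliminate $n_3=n-n_1-n_2$, massage the single remaining Diophantine equation into a product of two integer factors equal to a quantity $D=D(n,j)$ with $|D|\lesssim N^{6}$, and invoke the divisor bound $d(D)=O_\epsilon(D^\epsilon)=O_\epsilon(N^\epsilon)$. The paper's explicit factorization, in the variables $p=n_1+n_2$, $q=n_1n_2$, is $\bigl(3(4-3n)p+9q+(4-3n)^2\bigr)(3p-4)=l$ with $l$ determined by $(n,j)$ as in (\ref{Temp5}); this matches the shape you predict, except that the second factor is quadratic rather than affine in $(n_1,n_2)$ since it involves $q$ (each factorization then yields at most two ordered pairs $(n_1,n_2)$ via the quadratic $X^2-pX+q$, which is harmless).

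The one place your sketch goes astray is the mechanism you propose for the degenerate case $D=0$. You suggest controlling the resulting one-parameter family by ``the remaining (unused) equation expressing $j$,'' but no such equation exists: after eliminating $n_3$ the system collapses to a \emph{single} equation in $(n_1,n_2)$, and the factorized identity is equivalent to it, so on the locus where one factor vanishes the constraint $\sum P(n_i)=j$ is automatically satisfied and imposes nothing further. The case must instead be killed by arithmetic, and it can be: reducing the paper's factorization mod $3$ gives $l\equiv-4(4-3n)^2\equiv-64\equiv 2\ (\mathrm{mod}\ 3)$, so $l\neq0$ for every $(n,j)$ and the degenerate case never occurs. (Alternatively, on the line where the quadratic factor vanishes one computes $p^2-4q=\bigl(p+\tfrac{2(4-3n)}{3}\bigr)^2$, which forces one of the roots to be $n-\tfrac43\notin\mathbb{Z}$.) With that repair your argument is sound; it is in fact more careful than the paper, which passes over the $l=0$ possibility in silence.
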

\begin{proof}
To simplify the notation, make the substitution $p = n_1 + n_2$, $q = n_1n_2$. Since $|n_1|,|n_2|\leq N$, there holds $|p| \leq 2N$, $|q| \leq N^2$. Notice that $n_1, n_2 \in \mathbb{Z} \implies p, q \in \mathbb{Z}$. Moreover, $p, q$ define the quadratic $f(X) = X^2 - pX + q = 0$ which has at most 2 integer roots for $X$ corresponding to $n_1, n_2$.

Now rewrite equation (\ref{diophant}) as 
$$
P(n_1) + P(n_2) + P(n-p) = j,
$$ 
using $n_1+n_2 =p$. Expanding the polynomial, it is found that
\begin{equation}\label{Temp2}
\begin{aligned}
-8(n_1+n_2) + 2(n_1^2 + n_2^2) -(n_1^3 + n_2^3) -8n + 8p +2(n^2 - 2np + p^2) - (n^3 -3n^2p + 3np^2 -p^3) = j,
\end{aligned}
\end{equation}
where we grouped the $n_1, n_2$ terms together. Using properties of fundamental symmetric polynomials, one finds $n_1+n_2 =p$, $n_1^2 + n_2^2 = p^2 - 2q$ and $n_1^3 + n_2^3 = p^3 - 3pq$. Substituting these into (\ref{Temp2}) and cancelling out the $8p$ gives
$$
2(p^2 -2q) -(p^3-3pq) -8n +2(n^2 - 2np + p^2) - (n^3 -3n^2p + 3np^2 -p^3) = j.
$$
Convert this to the following equation in $(p,q)$:
$$
\begin{aligned}
 4p^2 -3n p^2 +3pq + (-4n+3n^2)p -4q =  j+ n^3 -2n^2 + 8n.
 \end{aligned}
$$

Now set $j+n^3 -2n^2 +8n =k $ and factor the coefficient of $p$ to obtain
\begin{equation}\label{Temp1}
\begin{aligned}
(4-3n)p^2 - n(4-3n)p + 3pq-4q = k.
\end{aligned}
\end{equation}
Multiplying both sides by 9, one obtains
$$
9(4-3n)p^2 - 9n(4-3n)p + 27pq - 36q =9k.
$$
Subtracting $4(4-3n)^2$ from both sides, the left-hand-side can be factorized, and (\ref{Temp1}) becomes
\begin{equation}\label{Temp4}
    (3(4-3n)p + 9q + (4-3n)^2)(3p-4) = 9k-4(4-3n)^2.
\end{equation}
Let $9k-4(4-3n)^2 =  l$, and using the original definition of $k$ have
\begin{equation}\label{Temp5}
   l = 9j+9n^3 -18n^2 +72n  -4(4-3n)^2.
\end{equation}
Substituting $l$ into (\ref{Temp4}) yields:
\begin{equation}\label{Temp3}
    (3(4-3n)p + 9q + (4-3n)^2)(3p-4) = l.
\end{equation}

Consider the set $L$ of ordered pair of factors of $l$, that is  $L = \{(a, b) \in \mathbb{Z}^2 \mid ab = l\}$. For every $(a, b) \in L$, the linear system in $p$ and $q$
$$
\left\{
\begin{aligned}
&3(4-3n)p + 9q + (4-3n)^2= a\\
&3p-4 = b
\end{aligned}
\right.
$$
has at most one integral solution $(p,q)$ for every pair of factors of $(a, b)$. Since an ordered pair of factors is determined by the first number, the number of solutions $(p, q)$ to the diophantine equation (\ref{diophant}) is bounded by $|L|$, which is in turn bounded by the number of factors of $l$.

Recall from (\ref{Temp5}) $l$ is equal to $j$ plus a cubic in $n$. As such one can write $O(l) = O(j)+O(n^3) \implies l \lesssim N^6$ by the initial conditions on $n, j$. A well known number theoretic result on the divisor bound tells us that the number of factors of $l\lesssim N^6$ is bounded by (for some constants $c_{1,2}$) 
$$
\exp\left(c_1\frac{\log l}{\log \log l} \right)
\leq \exp\left({c_2\frac{\log N}{\log \log N} }\right),
$$
and the right-hand-side is bounded by $C_\epsilon N^{\epsilon}$ for any $\epsilon >0$. This completes the proof.
\end{proof}

Note in the above proof did not use the bounds on $n_1, n_2$. These are however used in the lemmas below. In the next two lemmas, shall achieve the desired bounds on $r_{n, j}^{(N)}$ when one of the conditions $|n| \lesssim N^2, |j| \lesssim N^6$ is violated. 
\begin{lemma} \label{rnj2}
Let $N\in\mathbb{N}$, and $n,j\in\mathbb{Z}$ such that
$$
|n|\geq N^2,
\quad
|j|\geq N^6.
$$
Then
$r_{n, j}^{(N)}\leq 3$ for sufficiently large $N$. 
\end{lemma}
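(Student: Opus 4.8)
My plan is to exploit that, under the hypothesis $|n|\ge N^2$, the third frequency $m:=n-n_1-n_2$ appearing in (\ref{diophant}) is forced to be enormous compared with $n_1,n_2$, and that $P$ is strictly monotone there with derivative of size $\gtrsim N^4$; this rigidity essentially pins down $m$, hence also $n_1+n_2$, leaving at most two ordered pairs. The starting point is that $P(n)=n^3-2n^2+8n$ has derivative $P'(t)=3t^2-4t+8$, whose discriminant $16-96$ is negative, so $P'>0$ everywhere; thus $P$ is strictly increasing and, in particular, injective on $\mathbb{Z}$.

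Next I would localize $m$. Given any solution $(n_1,n_2)$ counted by $r_{n,j}^{(N)}$, set $m=n-n_1-n_2$. Because $|n_1|,|n_2|\le N$ while $|n|\ge N^2$, the integer $m$ lies in the window $I:=[n-2N,\,n+2N]$, every point of which has modulus at least $N^2-2N$. A short estimate then gives $P'(t)\ge \tfrac{3}{2}(N^2-2N)^2$ for all $t\in I$, so any two distinct integers of $I$ have $P$-values differing by at least $\min_{I}P'\gtrsim N^4$. On the other hand, (\ref{diophant}) reads $P(m)=j-P(n_1)-P(n_2)$ with $|P(n_1)+P(n_2)|\le 2(N^3+2N^2+8N)\le 22N^3$ for $N\ge1$, so $P(m)$ is confined to the interval $[j-22N^3,\,j+22N^3]$ of length $44N^3$. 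Once $N$ is large enough that $44N^3<\tfrac{3}{2}(N^2-2N)^2$, the window $I$ contains a \emph{unique} integer $m_0$ with $P(m_0)$ in this interval, so every solution must satisfy $n_1+n_2=n-m_0=:p_0$.

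To finish: with $p_0$ and $P(m_0)$ now fixed, substituting $n_2=p_0-n_1$ into (\ref{diophant}) yields $P(n_1)+P(p_0-n_1)=j-P(m_0)$, whose left-hand side is a polynomial in $n_1$ of degree exactly $2$, with leading coefficient $3p_0-4\ne0$ since $p_0\in\mathbb{Z}$. Hence there are at most two admissible values of $n_1$, so $r_{n,j}^{(N)}\le 2$, which in particular gives the stated bound $r_{n,j}^{(N)}\le3$ for all sufficiently large $N$.

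The only genuine obstacle I foresee is the elementary bookkeeping behind the inequality $44N^3<\tfrac{3}{2}(N^2-2N)^2$ used in the middle step, i.e.\ confirming that the $\Theta(N^4)$ gap between consecutive $P$-values near $m$ truly dominates the $O(N^3)$ fluctuation coming from $P(n_1)+P(n_2)$; this is precisely where the hypothesis $|n|\ge N^2$ enters, and it is why the estimate is asserted only for $N$ sufficiently large. I would also remark that the hypothesis $|j|\ge N^6$ is essentially automatic here: if $|n|\ge N^2$ and a solution exists at all, then $|j|\ge|P(m)|-22N^3\gtrsim N^6$, so whenever $|j|$ fails to be that large the count is simply $0$.
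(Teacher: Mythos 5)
Your proof is correct and follows essentially the same strategy as the paper's: use the hypothesis $|n|\ge N^2$ to show that the value of $n_1+n_2$ (equivalently of $m=n-n_1-n_2$) is rigid across all solutions, because a change in it would move $P(m)$ by $\gtrsim N^4$ while $P(n_1)+P(n_2)$ can only fluctuate by $O(N^3)$, and then reduce to a one-variable polynomial equation. Your version is in fact slightly sharper at the end, since you observe that the cubic terms in $P(n_1)+P(p_0-n_1)$ cancel, leaving a quadratic with leading coefficient $3p_0-4\ne 0$ and hence at most two ordered solutions, whereas the paper only invokes the weaker bound of three roots for a cubic.
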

\begin{proof}
Suppose there are multiple solutions $(n_1, n_2)$ and $(n_1', n_2')$ to (\ref{diophant}) satisfying $|n_1|, |n_2| \leq N$:
$$
P(n_1) + P(n_2) + P(n-n_1-n_2) = j,
\quad
P(n_1') + P(n_2') + P(n-n_1'-n_2') = j.
$$

Subtract these two equations with each other. The change in the left-hand-side is on the order of 
$$
\begin{aligned}
n_1^3 - & n_1'^3 + n_2^3 - n_2'^3 + (n-n_1-n_2)^3 - (n-n_1'-n_2')^3 
\\
&\sim n_1^3-n_1'^3 + n_2^3 - n_2'^3 + -3(n^2)(n_1+n_2 - n_1' - n_2'). 
\end{aligned}
$$
But notice the right-hand-side is constant, so the overall change in the left-hand-side must be zero. Assuming $n_1+n_2- n_1' - n_2'$ is nonzero, we have $$ |-3(n^2) (n_1+n_2-n_1'-n_2')| \gtrsim  N^4 .$$

However, $n_1^3 - n_1'^3 + n_2^3-n_2'^3 \leq 2(N^3 -(-N)^3) \lesssim N^3$ by the bounds on $n_1, n_2$. Therefore, the change in left-hand-side is dominated by the single term $-3n^3(n_1+n_2 -n_1'-n_2')$ for large $N$ assuming $(n_1+n_2 -n_1'-n_2')$. So thus the total change in the left hand side cannot be $0$ unless $n_1+n_2 =n_1' + n_2'$. As such, we see $n_1 + n_2$ must remain constant between different solutions.

Note that $n_1+n_2 =k$ defines a linear relationship between $n_1, n_2$. Then substitute $n_2 = k-n_1$ into the original equation. Since (\ref{diophant}) was a two-variable cubic now one obtains a single variable cubic in terms of $n_1$. Thus there are at most three values of $n_1$ satisfying these conditions. Once one finds $n_1$, $n_2 = k-n_1$ is fixed. So there are at most $3$ solutions obeying these bounds for sufficiently large $N$.
\end{proof}

\begin{lemma}\label{rnj3}
Let $N\in\mathbb{N}$ and $n,j\in\mathbb{Z}$ such that
$$
|n|\leq N^2,
\quad
|j|\geq N^6
$$
or 
$$
|n|\geq N^2,
\quad
|j|\leq N^6.
$$
Then
$r_{n, j}^{(N)}\leq 3$ for sufficiently large $N$. 
\end{lemma}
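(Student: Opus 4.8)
\emph{Proof proposal.} The plan is to dispatch both sub-cases at once by showing that, under either hypothesis, the ``third frequency'' $m:=n-n_1-n_2$ must be large --- $|m|\gtrsim N^2$ for every solution $(n_1,n_2)$ of (\ref{diophant}) with $|n_1|,|n_2|\leq N$ --- and then repeating the subtraction argument of Lemma \ref{rnj2} with $m$ in the role there played by $n$. (Note that the sub-case $|n|\geq N^2$ is in fact already covered verbatim by the proof of Lemma \ref{rnj2}, which used only $|n|\geq N^2$ and never the bound on $|j|$; the unified argument below costs nothing extra and also settles the case $|n|\leq N^2$, $|j|\geq N^6$.)

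First I would establish the lower bound on $|m|$. If $|n|\geq N^2$ this is immediate: $|m|\geq|n|-|n_1|-|n_2|\geq N^2-2N\gtrsim N^2$. If instead $|n|\leq N^2$ and $|j|\geq N^6$, then $|n_1|,|n_2|\leq N$ forces $|P(n_1)|,|P(n_2)|\lesssim N^3$, so from $P(m)=j-P(n_1)-P(n_2)$ and the reverse triangle inequality $|P(m)|\geq|j|-CN^3\gtrsim N^6$ for $N$ large; since $|P(m)|\leq 2|m|^3$ whenever $|m|\geq 4$, this again yields $|m|\gtrsim N^2$.

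Next, given two solutions $(n_1,n_2)$ and $(n_1',n_2')$ with third frequencies $m,m'$, I would set $\Delta:=(n_1+n_2)-(n_1'+n_2')$, so that $m-m'=-\Delta$ and $|\Delta|\leq 4N$, and subtract the two instances of (\ref{diophant}):
$$
[P(n_1)-P(n_1')]+[P(n_2)-P(n_2')]+[P(m)-P(m')]=0.
$$
The first two brackets are $O(N^3)$ since every $n_i,n_i'$ has modulus $\leq N$, whereas a one-line expansion gives $P(m)-P(m')=-\Delta\big[(m^2+mm'+m'^2)-2(m+m')+8\big]$, and the bound $|m|,|m'|\gtrsim N^2$ makes the bracket comparable to $m^2+mm'+m'^2\geq\tfrac12(m^2+m'^2)\gtrsim N^4$. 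Hence $|P(m)-P(m')|\gtrsim|\Delta|N^4$, so $\Delta\neq 0$ would force $N^4\lesssim N^3$ --- impossible for $N$ large. Therefore $\Delta=0$: the quantity $n_1+n_2$ takes a single value $k$ across all solutions, and $m=n-k$ is then fixed too. Substituting $n_2=k-n_1$ collapses (\ref{diophant}) to $P(n_1)+P(k-n_1)=j-P(n-k)$, whose left side is a quadratic in $n_1$ (the $n_1^3$ terms cancel and the coefficient $3k-4$ of $n_1^2$ is nonzero for every integer $k$); this has at most two roots, each pinning down $n_2=k-n_1$, so $r_{n,j}^{(N)}\leq 2\leq 3$.

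The main --- and really only --- point requiring care is the first step in the regime $|n|\leq N^2$, $|j|\geq N^6$: this is the sole place the hypothesis on $j$ is used, and one must pass from the lower bound on $|P(m)|$ to one on $|m|$. Everything after that is a faithful transcription of Lemma \ref{rnj2}, made possible by the symmetry of (\ref{diophant}) under permutations of $n_1,n_2,m$.
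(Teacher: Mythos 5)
Your proof is correct and follows essentially the same route as the paper's: both reduce the statement to the subtraction argument of Lemma \ref{rnj2}, the paper by observing that solutions can only exist when $|n|\gtrsim N^2$, you by the equivalent observation that the third frequency $m=n-n_1-n_2$ must satisfy $|m|\gtrsim N^2$. Your write-up is more carefully quantified than the paper's rather terse version (and correctly notes that after fixing $k=n_1+n_2$ the reduced one-variable equation is a quadratic with nonvanishing leading coefficient $3k-4$, giving the slightly sharper count $r_{n,j}^{(N)}\leq 2$), but the underlying idea is identical.
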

\begin{proof}
For sufficiently large $N$, first note that if $|n| \lesssim N^2$ and $|j| >> N^6$ then the right hand side of (\ref{diophant}) must grow faster than the left hand side, so there can be no solutions. A similar argument shows there are no solutions if $|n| >> N^2$ but $|j| \lesssim N^6$.

The remaining case is $|n| \geq cN^2$ and $|j| \geq N^6$ or $|n| \geq N^2$ and $|j| \geq c N^6$ for some constant $c<1$ independent of $N$. Then the proof for $r_{n, j}^{(N)} \leq 3$ is the same as the above lemma.
\end{proof}

Combining lemmas (\ref{rnj1}), (\ref{rnj2}), (\ref{rnj3}), we can now state
\begin{lemma}\label{diofinal}
If $r_{n, j}^{(N)}$ is as in definition \ref{rnj}, then given any $\epsilon>0$, there holds
$$
r_{n, j}^{(N)} \leq C_{\epsilon} N^{\epsilon}
$$ 
for any choice of $n, j$, with the constant $C_\epsilon$ depending on $\epsilon$ alone.
\end{lemma}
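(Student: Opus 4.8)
The proof is a case analysis that patches together the three preceding lemmas, so the plan is purely organizational. I would split the set of all pairs $(n,j)\in\mathbb{Z}^2$ into four regions according to the two dichotomies $|n|\le N^2$ versus $|n|\ge N^2$, and $|j|\le N^6$ versus $|j|\ge N^6$; the boundary cases $|n|=N^2$ or $|j|=N^6$ land in more than one region, which is harmless since each constituent lemma is stated with non-strict inequalities. In the central region $\{|n|\le N^2,\ |j|\le N^6\}$, Lemma \ref{rnj1} gives $r_{n,j}^{(N)}\le C_\epsilon N^\epsilon$. In the region $\{|n|\ge N^2,\ |j|\ge N^6\}$, Lemma \ref{rnj2} gives $r_{n,j}^{(N)}\le 3$. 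In the two ``mixed'' regions $\{|n|\le N^2,\ |j|\ge N^6\}$ and $\{|n|\ge N^2,\ |j|\le N^6\}$, Lemma \ref{rnj3} gives $r_{n,j}^{(N)}\le 3$. Since $3\le C_\epsilon N^\epsilon$ once $C_\epsilon$ (or $N$) is large enough, the four estimates combine into a single bound $r_{n,j}^{(N)}\le C_\epsilon N^\epsilon$ valid uniformly in $(n,j)$, with $C_\epsilon$ depending on $\epsilon$ alone.

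The one point that needs care is that Lemmas \ref{rnj2} and \ref{rnj3} are only asserted for sufficiently large $N$, say $N\ge N_0$. For the finitely many exceptional values $N<N_0$ I would fall back on the trivial count: by Definition \ref{rnj}, $r_{n,j}^{(N)}$ counts ordered pairs $(n_1,n_2)$ with $|n_1|,|n_2|\le N$, hence $r_{n,j}^{(N)}\le (2N+1)^2\le (2N_0+1)^2$ regardless of $n$ and $j$. Enlarging $C_\epsilon$ so that it also exceeds $(2N_0+1)^2$ absorbs these finitely many cases, and the resulting constant still depends only on $\epsilon$.

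There is essentially no genuine obstacle here: the lemma is bookkeeping once Lemmas \ref{rnj1}--\ref{rnj3} are in hand. The only things worth double-checking are that the four regions (with their boundary overlaps) really do exhaust $\mathbb{Z}^2$, which is immediate, and that the constant extracted from Lemma \ref{rnj1} — the divisor-bound constant — is already independent of $n,j,N$, so nothing is lost when taking the maximum over the regions and over the small-$N$ correction.
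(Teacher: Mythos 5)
Your proposal is correct and follows essentially the same route as the paper, which simply combines Lemmas \ref{rnj1}, \ref{rnj2}, and \ref{rnj3} without writing out the case analysis. Your explicit treatment of the finitely many small-$N$ values via the trivial bound $r_{n,j}^{(N)}\le(2N+1)^2$ is a detail the paper omits but does need, so it is a welcome addition rather than a deviation.
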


\subsection{Strichartz Bound}
Now turn to Strichartz type estimates for the linearized equation (\ref{1dlin}).

\begin{lemma}\label{partsum}
Suppose $u(x, t)$ is the solution to the linearized 1D Dysthe equation (\ref{1dlin}) with initial value $u(x, 0) = u_0(x)$. Let $S_Nu(x,t)=\sum_{|n|\leq N}\widehat u(n,t)e^{inx}$, the $N$th partial sum of the Fourier series expansion with respect to $x$. Then 
$$
\|S_Nu\|_{L^6(\mathbb{T}^2)} \lesssim N^{\epsilon}\|S_Nu_0\|_{L^2_x)}.
$$
\end{lemma}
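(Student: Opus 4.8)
The plan is to run the classical Bourgain cube-and-count argument, exactly as for the Airy equation, with Lemma \ref{diofinal} supplying the arithmetic input. Write $S_N u(x,t)=\sum_{|n|\le N}\widehat{u_0}(n)e^{inx+iP(n)t}$; this is genuinely $2\pi$-periodic in $t$ since $P(n)\in\mathbb{Z}$ for $n\in\mathbb{Z}$, so it is legitimate to view it on $\mathbb{T}^2$. Because $|S_N u|^6=|(S_N u)^3|^2$, we have $\|S_N u\|_{L^6(\mathbb{T}^2)}=\|(S_N u)^3\|_{L^2(\mathbb{T}^2)}^{1/3}$, where $(S_N u)^3$ is the \emph{algebraic} cube (not $|S_N u|^2 S_N u$). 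So it suffices to bound the $L^2$ norm of this cube.

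Next I would expand the cube as a triple Fourier series and read off its space-time Fourier coefficients. Multiplying out, $(S_N u)^3$ has Fourier coefficient at $(n,j)\in\mathbb{Z}^2$ equal to
$$
\sum_{\substack{n_1+n_2+n_3=n,\ |n_i|\le N\\ P(n_1)+P(n_2)+P(n_3)=j}}\widehat{u_0}(n_1)\widehat{u_0}(n_2)\widehat{u_0}(n_3),
$$
so by Plancherel on $\mathbb{T}^2$, $\|(S_N u)^3\|_{L^2}^2$ is the sum over $(n,j)$ of the squared moduli of these coefficients. Parametrizing an admissible triple $(n_1,n_2,n_3)$ by $(n_1,n_2)$ (with $n_3=n-n_1-n_2$), the number of admissible $(n_1,n_2)$ for a fixed $(n,j)$ is at most $r_{n,j}^{(N)}$ from Definition \ref{rnj} — the extra requirement $|n_3|\le N$ only shrinks the count. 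Applying Cauchy–Schwarz to each inner sum gives
$$
\Bigl|\sum \widehat{u_0}(n_1)\widehat{u_0}(n_2)\widehat{u_0}(n_3)\Bigr|^2 \le r_{n,j}^{(N)}\sum |\widehat{u_0}(n_1)|^2|\widehat{u_0}(n_2)|^2|\widehat{u_0}(n_3)|^2,
$$
with the same frequency constraints on the right.

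Now I invoke Lemma \ref{diofinal}: $r_{n,j}^{(N)}\le C_\epsilon N^\epsilon$ uniformly in $n,j$. Pulling this factor out and summing over all $(n,j)$, each triple $(n_1,n_2,n_3)$ with $|n_i|\le N$ is counted exactly once (its pair $(n,j)$ being determined by it), so the remaining sum telescopes to $\bigl(\sum_{|n|\le N}|\widehat{u_0}(n)|^2\bigr)^3=\|S_N u_0\|_{L^2_x}^6$. Hence $\|(S_N u)^3\|_{L^2}^2\le C_\epsilon N^\epsilon\|S_N u_0\|_{L^2_x}^6$; taking square roots and then cube roots yields $\|S_N u\|_{L^6(\mathbb{T}^2)}\le C_\epsilon^{1/6}N^{\epsilon/6}\|S_N u_0\|_{L^2_x}$, and since $\epsilon>0$ is arbitrary this is the claimed estimate.

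The only genuinely substantive ingredient, the divisor-type bound $r_{n,j}^{(N)}\le C_\epsilon N^\epsilon$, is already in hand via Lemma \ref{diofinal}, so I do not expect a real obstacle here; everything else is bookkeeping. The two points that need care are: keeping the algebraic cube $(S_N u)^3$ straight (rather than $|S_N u|^2 S_N u$) when converting the $L^6$ norm into an $L^2$ norm, so that the Fourier transform is a genuine threefold convolution matching the Diophantine equation \eqref{diophant}; and making sure the Cauchy–Schwarz step is applied with the exact count $r_{n,j}^{(N)}$ and not an overcount.
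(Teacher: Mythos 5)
Your proposal is correct and follows essentially the same route as the paper's proof: convert $\|S_Nu\|_{L^6}^6$ into $\|(S_Nu)^3\|_{L^2}^2$, apply Plancherel to the algebraic cube, use Cauchy--Schwarz with the counting function $r_{n,j}^{(N)}$, and invoke Lemma \ref{diofinal}. The only cosmetic difference is that you carry the $N^{\epsilon/6}$ exponent explicitly before absorbing it by the arbitrariness of $\epsilon$, which the paper does implicitly.
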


\begin{proof}
Given integers $n,j$, let $\mathcal{A}_{n,j}$ be the set of all ordered integer tuples satisfying (\ref{diophant}), that is,  
$$
\mathcal{A}_{n,j}
= \left\{
(n_1, n_2) \in \mathbb{Z}^2 \mid P(n_1) + P(n_2) + P(n-n_1-n_2) = j.
\right\}
$$
Recall the representation for the solution $u$ given in (\ref{1dlinsol}). The integral should be as follows:
$$
\|S_Nu\|_{L^6(x,t)}^6 = \int_{[0,2\pi]}\left(\int_{[0,2\pi]}|S_Nu(x,t)|^6dt\right)dx,
$$
and one can estimate the integral through the Plancherel Theorem. Start with the equality
$$
\begin{aligned}
\|S_Nu\|^6_{L^6_{x,t}}
=\|(S_Nu)^3\|^2_{L^2(x, t)}.
\end{aligned}
$$
Compute
$$
\begin{aligned}
(S_Nu(x,t))^3 
&=\left(\sum_{|n|\leq N}e^{inx+iP(n)t}\widehat{u}_0(n)\right)^3\\
&=\sum_{|n_1|,|n_2|,|n_3| \leq N}
{e^{i(n_1+n_2+n_3)x}e^{i[P(n_1)+ P(n_2) +P(n_3)]t}
\widehat{u}_0(n_1)\widehat{u}_0(n_2)\widehat{u}_0(n_3)}.
\end{aligned}
$$
By the Plancherel Theorem, the $L^2_{x,t}$ norm over $\mathbb{T}^2$ of $(S_Nu(x,t))^3$ is thus
$$
\begin{aligned}
\|(S_Nu(x,t))^3\|_{L^2_{x,t}}^2 
&= \sum_{|n|\leq 3N, j \in \mathbb{Z}}\left| \sum_{\substack{n_1+n_2+n_3 = n,\\ P(n_1)+P(n_2)+P(n_3) = j \\ |n_1|, |n_2|,|n_3| \leq N}}\widehat{u}_0(n_1) \widehat{u}_0(n_2) \widehat{u}_0(n_3)\right|^2\\
&=\sum_{\substack{|n| \leq 3N \\ j\in \mathbb{Z}}}\left|{\sum_{\substack{(n_1,n_2) \in \mathcal{A}_{n,j} \\ |n_1|, |n_2|, |n-n_1-n_2| \leq N}}\widehat{u}_0(n_1)\widehat{u}_0(n_2)\widehat{u}_0(n-n_1-n_2)}\right|^2,
\end{aligned}
$$
where the last equality uses the substitution $n= n_1+n_2 +n_3$. So
\begin{equation}
\label{sum}\| S_Nu\|_{L^6_{x, t}}^6 
= \sum_{\substack{|n| \leq 3N \\ j\in \mathbb{Z}}}\left|{\sum_{\substack{(n_1,n_2) \in \mathcal{A}_{n,j} \\ |n_1|, |n_2|, |n-n_1-n_2| \leq N}}\widehat{u}_0(n_1)\widehat{u}_0(n_2)\widehat{u}_0(n-n_1-n_2)}\right|^2.
\end{equation}
Note there are $r_{n,j}^{(N)}$ quadruples in $\mathcal{A}_{n,j}$ with $|n_1|, |n_2| \leq N$ by definition.

Thus one can bound the left hand side of (\ref{sum}) using Cauchy-Schwarz as
\begin{equation}\label{sum2}
\begin{aligned}
\sum_{\substack{|n| \leq 3N \\ j\in \mathbb{Z}}}&{\left|{\sum_{\substack{ (n_1,n_2) \in \mathcal{A}_{n,j} \\ |n_1|, |n_2|, |n-n_1-n_2| \leq N}}\widehat{u}_0(n_1)\widehat{u}_0(n_2)\widehat{u}_0(n-n_1-n_2)}\right|^2}\\
&\leq \sum_{\substack{|n| \leq 3N \\ j\in \mathbb{Z}}}\left({r_{n, j}^{(N)}{\sum_{\substack{(n_1,n_2) \in \mathcal{A}_{n,j} \\ |n_1|, |n_2|, |n-n_1-n_2| \leq N}}\left|\widehat{u}_0(n_1)\widehat{u}_0(n_2)\widehat{u}_0(n-n_1-n_2)\right|^2}}\right)\\
&\leq \left(\sup_{n, j \in \mathbb{Z}}{r_{n, j}^{(N)}}\right){\sum_{n, j \in \mathbb{Z}}\sum_{\substack{(n_1,n_2) \in \mathcal{A}_{n,j} \\ |n_1|, |n_2|, |n-n_1-n_2| \leq N}}\left|\widehat{u}_0(n_1)\widehat{u}_0(n_2)\widehat{u}_0(n-n_1-n_2)\right|^2}\\
&= \left(\sup_{n, j \in \mathbb{Z}}{r_{n, j}^{(N)}}\right){{\sum_{ |n_1|, |n_2|, |n_3| \leq N}\left|\widehat{u}_0(n_1)\widehat{u}_0(n_2)\widehat{u}_0(n_3)\right|^2}}.
\end{aligned}
\end{equation}
The last equality of (\ref{sum2}) is seen from the fact that every triple $(n_1, n_2, n)$ gives a unique $j$ such that the quadruple $(n_1, n_2, n, j)$ satisfies the original equation, and substitute back in $n_3 = n-n_1-n_2$.

Finally notice that the expansion of
$$
\left(\sup_{n, j \in \mathbb{Z}}{r_{n, j}^{(N)}}\right) \left(\sum_{|n|\leq N}{|\widehat{u}_0(n)|^2}\right)^3
$$ 
gives all the terms in (\ref{sum2}) as well as additional nonnegative terms due to the coefficients of $3$; therefore
$$
\| S_Nu\|_{L^6_{x,t}}^6 
\leq \left(\sup_{n, j \in \mathbb{Z}}{r_{n,j}^{(N)}}\right)
\left(\sum_{|n|\leq N}{|\widehat{u}_0(n)|^2}\right)^3.
$$
Now take the 6th root of both sides and apply Lemma \ref{diofinal} to bound $$\sup_{n, j \in \mathbb{Z}} r_{n, j}^{(N)} \lesssim N^{\epsilon},$$ which gives the desired
$$
\| S_Nu\|_{L^6_{x,t}} 
\lesssim N^{\epsilon}\left(\sum_{|n|\leq N}{|\widehat{u}_0(n)|^2}\right)^{\frac{1}{2}} = N^{\epsilon}\|S_{N}u_0\|_{L^2_x}.
$$
\end{proof}

\begin{proof}[Proof of theorem \ref{str}]
We shall employ a dyadic decomposition argument. For $j\in\mathbb{N}$, compute
\begin{equation}\label{2adic}
\begin{aligned}
\| S_{2^{j+1}}u - S_{2^{j}}u\|_{L^6_{x, t}} 
\lesssim 2^{(j+1)\epsilon}\|S_{2^{j+1}}u_0 - S_{2^{j}}u_0\|_{L^2_{x}}
\lesssim 2^{j\epsilon}\left|\sum_{2^j \leq |n| < 2^{j+1}}{  |\widehat{u}_0(n)|^2}\right|^{\frac{1}{2}}.
\end{aligned}
\end{equation}
By Minkowski's inequality, one finds for $k\in\mathbb{N}$ that
\begin{equation}\label{decomp}
\begin{aligned}
\|S_{2^k}u\|_{L^6_{x,t}} 
&\leq |\widehat{u}_0(0)| + \sum_{j=1}^{k}{\|S_{2^{j+1}}u - S_{2^{j}}u\|_{L^6_{x,t}}} \\
&\lesssim \sum_{j=0}^k2^{j\epsilon}\left|\sum_{2^j \leq |n| < 2^{j+1}}{  |\widehat{u}_0(n)|^2}\right|^{\frac{1}{2}}
\lesssim\sum_{j=0}^k2^{-j\epsilon}\left|\sum_{2^j \leq |n| < 2^{j+1}}{  |n|^{4\epsilon}|\widehat{u}_0(n)|^2}\right|^{\frac{1}{2}}.
\end{aligned}
\end{equation}
By the Cauchy-Schwarz inequality, the right-hand-side is bounded by
$$
\left(\sum_{j=0}^k 2^{-j\epsilon}\right)^{\frac{1}{2}}
\left(\sum_{|n| < 2^{k+1}}{  |n|^{2\epsilon}|\widehat{u}_0(n)|^2}\right)^{\frac{1}{2}}
\leq C_\epsilon \|u_0\|_{H^{2\epsilon}}.
$$
Taking the limit $k\to \infty$ in (\ref{decomp}), obtain the desired result.
\end{proof}

\begin{proof}[Proof of Corollary \ref{lrhxstr}]
The techniques of this proof is a generalization of the previous arguments. Consider some integer $m\geq 3$. Then one can obtain an estimate for $\|S_Nu\|_{L^{2m}_{x,t}}$ similarly as in Lemma \ref{partsum}. consider $r^{(N),m}_{n, j}$ which is the number of solutions over the integers to 
$$
P(n_1) + P(n_2) + \dots P(n_{m-1}) + P(n- n_1-n_2 - \dots - n_{m-1}) = j
$$ 
with each $|n_i| \leq N$. Then by the same proof as Theorem \ref{diofinal} for each choice of $n_3, n_4 \dots n_{m-1},$ there are at most $C_{\epsilon}N^{\epsilon}$ solutions for $n_1, n_2$ with magnitude less or equal to $N$. Thus
$$
\sup_{n, j \in \mathbb{Z}}{r^{(N),m}_{n, j}} \leq C_{\epsilon, m} N^{s-3 + \epsilon}.
$$
Then by part of the proof of lemma \ref{partsum} see 
$$
\|S_Nu\|_{L^{2m}_{x,t}} \leq C_{\epsilon, m} N^{\frac{m-3 + \epsilon}{2m}} \|S_Nu_0\|_{L^2_{x}}.
$$

Now apply Riesz-Thorin Interpolation Theorem  between the spaces $L^6_{x,t}$ and $L^{2m}_{x,t}$. For $\theta = \frac{(r-6)m}{r(m-3)}$, there holds
$$
\frac{1}{r} = \frac{1-\theta}{6} + \frac{\theta}{2m}.
$$
By interpolation, thus have
$$
\|S_Nu\|_{L^{r}_{x,t}} \leq C_{\epsilon, r} N^{\frac{m-3}{2m} \theta + \epsilon } \|S_Nu_0\|_{L^2_{x}},
$$
or equivalently
$$
\|S_Nu\|_{L^{r}_{x,t}} \leq C_{\epsilon, r} N^{\frac{1}{2} - \frac{3}{r} + \epsilon } \|S_Nu_0\|_{L^2_{x}}.
$$
By the proof of Theorem \ref{str}, one can fold the $N^{\frac{1}{2} - \frac{3}{r} + \epsilon }$ term inside the sum using a $k$-adic decomposition to yield the desired result.
\end{proof}

\section{Linear Inequalities in Bourgain Spaces}\label{3}

In the following two sections, for a function $f$ in the spacetime variable $(x,t)\in\mathbb{T}^2$, shall use $\widehat{f}(n,\tau)$ to denote its space-time Fourier transform.

\subsection{Definitions}
Bourgain in \cite{Bourgain1993} introduced the notion of $X^{s, b}$ and $Y^{s,b}$ spaces (with a different notation) which are suitable norms to the function at hand. It captures the vibrating nature of the solution, i.e. that the space-time Fourier transform of the solution should concentrate near the curve given by the dispersive relation, by introducing a weight in the norm. We also introduce the space $Z^{s,b} = X^{s,b} \cap Y^{s, b- \frac{1}{2}}$. In our case, the $X^{s, b}$ and $Y^{s,b}$ spaces will be given by the norm 
$$
\|u\|_{X^{s,b}} 
:= \left(\sum_{n\in\mathbb{Z}}\langle n\rangle^{2s}\sum_{\tau \in \mathbb{Z}}(1+|\tau - P(n)|)^{2b}|\widehat u(n, \tau)|^2\right)^\frac{1}{2} 
= \|\langle n\rangle ^s \langle \tau - P(n) \rangle ^b \widehat{u}(n, \tau)\|_{l^2_{n, \tau}(\mathbb{Z}^2)},
$$
$$
\|u\|_{Y^{s,b}} 
:= \left[\sum_{n\in\mathbb{Z}}\langle n\rangle^{2s}\left(\sum_{\tau \in \mathbb{Z}}(1+|\tau - P(n)|)^b|\widehat u(n, \tau)|\right)^2\right]^\frac{1}{2} 
= \|\langle n\rangle ^s \langle \tau - P(n) \rangle ^b \widehat{u}(n, \tau)\|_{l^2_{n}l^1_{\tau}},
$$

Recalling the definition \ref{linprop} of the propagator $e^{it\mathcal{L}}$, obtain the following identity:
\begin{equation}\label{bourgainsobolev}
\|u\|_{X^{s,b}} = \|e^{it\mathcal{L}}u\|_{H^{s}_xH^b_t}.
\end{equation}

\begin{lemma}\label{ysbxsbembed}
We have the embedding $X^{s, b+ \delta} \subset Y^{s, b - \frac{1}{2}}$ for $\delta >0$.
\end{lemma}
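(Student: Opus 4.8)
The plan is to reduce the claimed embedding $X^{s,b+\delta}\subset Y^{s,b-\frac12}$ to a purely one-dimensional statement about the inner sum over $\tau$, since the weight $\langle n\rangle^s$ and the outer $\ell^2_n$ structure are identical in both norms. Concretely, for fixed $n$ write $g_n(\tau)=\langle\tau-P(n)\rangle\,|\widehat u(n,\tau)|$; then I want to show
$$
\Big(\sum_{\tau\in\mathbb Z}\langle\tau-P(n)\rangle^{2(b-\frac12)}|\widehat u(n,\tau)|^2\Big)^{\frac12}\cdot\langle\tau-P(n)\rangle^{-(b-\frac12)}\cdots
$$
— more cleanly: it suffices to prove, for any sequence $(a_\tau)_{\tau\in\mathbb Z}$,
$$
\sum_{\tau\in\mathbb Z}\langle\tau-P(n)\rangle^{\,b-\frac12}\,|a_\tau|\;\lesssim_\delta\;\Big(\sum_{\tau\in\mathbb Z}\langle\tau-P(n)\rangle^{\,2(b+\delta)}\,|a_\tau|^2\Big)^{\frac12},
$$
with an implied constant independent of $n$. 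Taking $a_\tau=\widehat u(n,\tau)$, squaring, multiplying by $\langle n\rangle^{2s}$ and summing in $n$ then gives exactly $\|u\|_{Y^{s,b-\frac12}}\lesssim\|u\|_{X^{s,b+\delta}}$.

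The one-dimensional inequality is Cauchy–Schwarz with a carefully chosen splitting of the weight. Shifting the index so that $P(n)$ becomes the origin (the map $\tau\mapsto\tau-P(n)$ is a bijection of $\mathbb Z$, so nothing depends on $n$ anymore), I must bound $\sum_{m\in\mathbb Z}\langle m\rangle^{b-\frac12}|c_m|$ by $\big(\sum_m\langle m\rangle^{2(b+\delta)}|c_m|^2\big)^{1/2}$. Apply Cauchy–Schwarz as
$$
\sum_{m}\langle m\rangle^{b-\frac12}|c_m|
=\sum_m\Big(\langle m\rangle^{-\frac12-\delta}\Big)\Big(\langle m\rangle^{b+\delta}|c_m|\Big)
\le\Big(\sum_m\langle m\rangle^{-1-2\delta}\Big)^{\frac12}\Big(\sum_m\langle m\rangle^{2(b+\delta)}|c_m|^2\Big)^{\frac12}.
$$
The first factor is a convergent series precisely because $1+2\delta>1$, i.e. $\delta>0$, and its value $C_\delta=\big(\sum_{m\in\mathbb Z}\langle m\rangle^{-1-2\delta}\big)^{1/2}<\infty$ is the constant in the embedding. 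Note the exponent bookkeeping works out: $(b-\frac12)+(\frac12+\delta)=b+\delta$, so the two exponents I peeled off sum correctly, and no cancellation or sign condition on $b$ is needed.

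There is essentially no main obstacle here — the only thing to be careful about is that the weight $\langle\tau-P(n)\rangle$ is the \emph{same} in the definitions of $X^{s,b}$ and $Y^{s,b}$, so after the shift the problem genuinely decouples from $n$ and the constant is uniform; and that $\delta>0$ is used in exactly one place, namely convergence of $\sum_m\langle m\rangle^{-1-2\delta}$, which is why the statement fails at $\delta=0$. A remark to that effect (the borderline loss is why one cannot take $\delta=0$, mirroring the analogous $\ell^1$-vs-$\ell^2$ loss for $H^b\hookrightarrow L^1$ on the real line when $b>\frac12$) would be appropriate but is not needed for the proof.
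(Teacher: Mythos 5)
Your proof is correct and follows essentially the same route as the paper: Cauchy--Schwarz in $\tau$ for each fixed $n$, splitting the weight as $\langle\tau-P(n)\rangle^{b-\frac12}=\langle\tau-P(n)\rangle^{-\frac12-\delta}\cdot\langle\tau-P(n)\rangle^{b+\delta}$, with the convergence of $\sum_m\langle m\rangle^{-1-2\delta}$ supplying the uniform constant. The only cosmetic issue is the garbled display before your clean restatement of the one-dimensional inequality, which should be deleted.
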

\begin{proof}
Write out the $Y^{s,b}$ norm and separate the dispersive interaction term into two pieces, and then apply Cauchy-Schwarz on $\tau$ with the second term being the denominator:
$$
\begin{aligned}
\|f\|_{Y^{s,b - \frac{1}{2}}} 
&= \left\|\widehat{f} \langle n \rangle^s \frac{\langle \tau - P(n) \rangle^{b+\delta}}{\langle \tau -P(n) \rangle^{\frac{1}{2} + \delta}} \right\|_{l^2_nl^1_{\tau}}\\
&\leq \|\widehat{f} \langle n \rangle^s \langle \tau - P(n) \rangle^{b+ \delta} \|_{l^2_{n, \tau}} \|\langle \tau - P(n) \rangle^{-(\frac{1}{2} + \delta) } \|_{l^2_{\tau}}.
\end{aligned}
$$
Then the first term is just $\|f\|_{X^{s, b}}$, and one can write the second term as 
$$
\|\langle \tau - P(n) \rangle^{-(\frac{1}{2} + \delta) } \|_{l^2_{\tau}} = \left(\sum_{\tau \in \mathbb{Z}}{\langle \tau \rangle^{-(1 + 2\delta)}}\right)^{\frac{1}{2}} \lesssim_{\delta} 1,
$$
since the middle piece is a well known convergent sum for $\delta >0$. We are left with the result.
\end{proof}

\begin{lemma}\label{3.9embed}
We have the embedding $Y^{s, 0} \subset L_t^{\infty}H^s_x$.
\end{lemma}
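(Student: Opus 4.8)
The plan is to unfold both norms in terms of the space-time Fourier coefficients $\widehat u(n,\tau)$ and observe that the inclusion reduces to the triangle inequality in $\ell^1_\tau$. First I would fix a time slice $t\in\mathbb{T}$ and write
$$
\|u(\cdot,t)\|_{H^s_x}^2 = \sum_{n\in\mathbb{Z}} \langle n\rangle^{2s}\,|(\mathcal{F}_x u)(n,t)|^2,
$$
and recall that the spatial Fourier coefficient is recovered from the full space-time transform by the time Fourier series $(\mathcal{F}_x u)(n,t) = \sum_{\tau\in\mathbb{Z}} \widehat u(n,\tau)\,e^{i\tau t}$.

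Next I would apply the triangle inequality to this series to obtain the bound $|(\mathcal{F}_x u)(n,t)| \le \sum_{\tau\in\mathbb{Z}} |\widehat u(n,\tau)|$, which is uniform in $t$. Substituting into the previous display and then taking the supremum over $t\in\mathbb{T}$ gives
$$
\|u\|_{L^\infty_t H^s_x}^2 \le \sum_{n\in\mathbb{Z}} \langle n\rangle^{2s}\left(\sum_{\tau\in\mathbb{Z}}|\widehat u(n,\tau)|\right)^2 = \|u\|_{Y^{s,0}}^2,
$$
which is exactly the asserted embedding, with embedding constant $1$.

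The only point deserving a word of care is the justification that the time Fourier series for $(\mathcal{F}_x u)(n,\cdot)$ converges pointwise (in fact absolutely and uniformly) in $t$ for each $n$; but this is automatic for any $u$ with $\|u\|_{Y^{s,0}}<\infty$, since finiteness of that norm forces $\sum_{\tau}|\widehat u(n,\tau)|<\infty$ for every $n$. Consequently there is no genuine obstacle: the lemma is essentially a repackaging of the elementary fact that the Fourier transform maps $\ell^1$ into $L^\infty$, applied fiber-by-fiber in the frequency variable $n$ with the $H^s_x$ weight $\langle n\rangle^{2s}$ left untouched.
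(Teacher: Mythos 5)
Your proof is correct and is essentially the paper's own argument spelled out: the paper simply cites the Hausdorff--Young inequality, and your fiberwise application of the $\ell^1_\tau \to L^\infty_t$ endpoint of that inequality, followed by summing in $n$ against the weight $\langle n\rangle^{2s}$, is exactly that computation made explicit. No gaps; the embedding constant $1$ and the remark on absolute convergence of the $\tau$-series are both accurate.
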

The proof is a direct application of the Hausdorff-Young Inequality.

\subsection{Restating the Strichartz Inequalities}
The Strichartz estiamtes in Theorem \ref{str} can be restated in terms of the embedding of Bourgain spaces.
\begin{proposition}[Bourgain's $L^6$ Strichartz Inequality]\label{l6str}
We have the embedding $Y^{\epsilon,0 } \subset L^6_{x,t}$ for positive $\epsilon$.
\end{proposition}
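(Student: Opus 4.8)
The plan is to deduce the embedding $Y^{\epsilon,0}\subset L^6_{x,t}$ from the Strichartz estimate of Lemma~\ref{partsum} (equivalently Theorem~\ref{str}) by a duality/transference argument that converts the space–time frequency decomposition of a general $f\in Y^{\epsilon,0}$ into a superposition of solutions of the linearized equation. The key observation is that, writing $\widehat f(n,\tau)$ for the space–time Fourier transform, one can slice $f$ according to the value of $\tau-P(n)$: for each fixed $\mu\in\mathbb{Z}$ let $f_\mu$ be the piece of $f$ with $\tau-P(n)=\mu$, i.e. $\widehat{f_\mu}(n,\tau)=\widehat f(n,\tau)\mathbf{1}_{\tau-P(n)=\mu}$. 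Then $f=\sum_{\mu\in\mathbb{Z}}f_\mu$, and each $f_\mu$ is, up to the modulation factor $e^{i\mu t}$, of the form $e^{it\mathcal{L}}g_\mu$ for the spatial function $g_\mu$ with $\widehat{g_\mu}(n)=\widehat f(n,P(n)+\mu)$; in particular $\|g_\mu\|_{H^\epsilon_x}$ equals the $n$-side contribution of the $\mu$-slice.

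The first step is to apply Lemma~\ref{partsum}, or rather its dyadic consequence in the proof of Theorem~\ref{str}, to each slice: since $|f_\mu|=|e^{it\mathcal{L}}g_\mu|$ pointwise (the modulation $e^{i\mu t}$ has modulus one), we get $\|f_\mu\|_{L^6_{x,t}}\lesssim \|g_\mu\|_{H^{\epsilon}_x}$ with a constant independent of $\mu$. The second step is to sum over $\mu$ by the triangle inequality in $L^6_{x,t}$:
$$
\|f\|_{L^6_{x,t}}\le \sum_{\mu\in\mathbb{Z}}\|f_\mu\|_{L^6_{x,t}}\lesssim \sum_{\mu\in\mathbb{Z}}\|g_\mu\|_{H^{\epsilon}_x}
=\sum_{\mu\in\mathbb{Z}}\Big(\sum_{n}\langle n\rangle^{2\epsilon}|\widehat f(n,P(n)+\mu)|^2\Big)^{1/2}.
$$
The third step is to recognize the right-hand side: interchanging the order, $\sum_\mu\big(\sum_n \langle n\rangle^{2\epsilon}|\widehat f(n,P(n)+\mu)|^2\big)^{1/2}$ is exactly the mixed norm $\big\|\langle n\rangle^{\epsilon}\langle \tau-P(n)\rangle^{0}\widehat f(n,\tau)\big\|_{\ell^1_\tau \ell^2_n}$. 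Since the $\ell^1_\tau\ell^2_n$ norm dominates the $\ell^2_n\ell^1_\tau$ norm (Minkowski's inequality for the pair of exponents $1\le 2$), this last quantity is bounded by $\|f\|_{Y^{\epsilon,0}}$, which finishes the proof.

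The main obstacle is making sure the transference is actually lossless: Lemma~\ref{partsum} as stated gives the bound for the truncated solution $S_N u$ with an $N^\epsilon$ loss, so one must run the dyadic decomposition exactly as in the proof of Theorem~\ref{str} to absorb that loss into $\langle n\rangle^{\epsilon}$ on the Fourier side, uniformly in the slice parameter $\mu$ — this is why the target regularity on the $Y$ side is $\epsilon$ rather than $0$. A secondary technical point is the direction of the mixed-norm inequality $\ell^1_\tau\ell^2_n \hookrightarrow \ell^2_n\ell^1_\tau$ at the very end: one wants $\|f\|_{L^6}\lesssim \|f\|_{Y^{\epsilon,0}}$, and indeed $\|a_{n,\tau}\|_{\ell^1_\tau\ell^2_n}\le \|a_{n,\tau}\|_{\ell^2_n\ell^1_\tau}$ holds because we are moving from the inner exponent $2$ to the outer exponent $2$ against an outer $1$ and inner $1$ respectively — this is precisely Minkowski's integral inequality and should be stated carefully. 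Everything else is routine.
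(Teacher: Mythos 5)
Your slicing-and-transference scheme (decompose $f=\sum_\mu f_\mu$ along the modulation $\mu=\tau-P(n)$, apply Theorem \ref{str} to each slice with its modulus-one factor $e^{i\mu t}$ stripped off, and sum over $\mu$) is the natural way to convert the free-evolution estimate into a space--time one, and your first three steps are correct: they yield
$$
\|f\|_{L^6_{x,t}}\;\lesssim\;\sum_{\mu\in\mathbb Z}\Big(\sum_{n}\langle n\rangle^{2\epsilon}\,|\widehat f(n,P(n)+\mu)|^2\Big)^{1/2}\;=\;\big\|\langle n\rangle^{\epsilon}\,\widehat f(n,P(n)+\mu)\big\|_{l^1_\mu l^2_n}.
$$
The gap is in your last step. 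Minkowski's inequality for the exponents $1\le 2$ gives $\|a\|_{l^2_n l^1_\mu}\le\|a\|_{l^1_\mu l^2_n}$ --- the mixed norm with the larger exponent innermost is the \emph{smaller} one --- which is exactly the reverse of what you use: you need $\|a\|_{l^1_\mu l^2_n}\le\|a\|_{l^2_n l^1_\mu}=\|f\|_{Y^{\epsilon,0}}$, and that inequality is false. Indeed, you first (correctly) say that $l^1_\tau l^2_n$ dominates $l^2_n l^1_\tau$ and then conclude that the former is bounded by the latter. For $a_{n,\mu}=\mathbf 1_{|n|\le N}\mathbf 1_{\mu=-P(n)}$ (one nonzero entry per row, all in distinct columns) one has $\|a\|_{l^1_\mu l^2_n}\sim N$ while $\|a\|_{l^2_n l^1_\mu}\sim N^{1/2}$. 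So what your argument actually proves is the embedding into $L^6_{x,t}$ of the space normed by $\|\langle n\rangle^\epsilon\widehat f\|_{l^1_\tau l^2_n}$, not of $Y^{\epsilon,0}$ as defined in the paper.

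This is not a technicality that a more careful constant can fix. The same example realized as a function, $f(x,t)=\sum_{|n|\le N}e^{inx}$ (constant in $t$, so $\widehat f(n,\tau)=\mathbf 1_{|n|\le N}\mathbf 1_{\tau=0}$ and every $n$ sits in a different modulation slice), satisfies $\|f\|_{L^6_{x,t}}\sim N^{5/6}$ but $\|f\|_{Y^{\epsilon,0}}\sim N^{1/2+\epsilon}$, so the claimed embedding cannot be reached along this route for $\epsilon<1/3$. The transference argument is lossless only up to the $l^1_\tau l^2_n$ mixed norm (or, after Cauchy--Schwarz in $\mu$, up to $\|f\|_{X^{\epsilon,\frac12+\delta}}$); to get a true statement you must either take the $\tau$-sum outside the $n$-sum in the hypothesis or strengthen the modulation weight, rather than invoke Minkowski in the direction in which it does not go.
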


\begin{proposition}\label{lrxsbstr}
For positive $\epsilon$ and $r \geq 6$ have the embedding $Y^{\frac{1}{4} - \frac{3}{2r} + \epsilon, 0} \subset L^r_{x,t}.$
\end{proposition}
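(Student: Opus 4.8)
The plan is to deduce Proposition \ref{lrxsbstr} from Corollary \ref{lrhxstr} in exactly the same way that Proposition \ref{l6str} follows from Theorem \ref{str}, so the real content is a transfer principle from the linear propagator estimate to the Bourgain-space embedding. Recall that Corollary \ref{lrhxstr} gives, for $r \geq 6$, the bound $\|e^{it\mathcal{L}}f\|_{L^r_{x,t}} \lesssim \|f\|_{C_t H^{s_r}_x}$ with $s_r = \tfrac14 - \tfrac{3}{2r} + \epsilon$ (one passes from the fixed-initial-data statement of the corollary to the propagator statement exactly as in the restatement attached to Theorem \ref{str}). The goal is to replace the $C_t H^{s_r}_x$ norm on the right by the $Y^{s_r,0}$ norm of a general spacetime function $u$.

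First I would use the standard Fourier representation of a general $u$ on $\mathbb{T}^2$ as a superposition of modulated linear solutions: writing $\widehat u(n,\tau)$ for the spacetime Fourier coefficients and setting $\mu = \tau - P(n)$, one has
\begin{equation*}
u(x,t) = \sum_{n}\sum_{\mu \in \mathbb{Z}} \widehat u(n, P(n)+\mu)\, e^{inx} e^{i(P(n)+\mu)t}
= \sum_{\mu \in \mathbb{Z}} e^{i\mu t}\big(e^{it\mathcal{L}} g_\mu\big)(x,t),
\end{equation*}
where $g_\mu$ is the $x$-function with Fourier coefficients $\widehat{g_\mu}(n) = \widehat u(n, P(n)+\mu)$. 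The factor $e^{i\mu t}$ is a unimodular function of $t$ alone, so it does not affect the $L^r_{x,t}$ norm; hence by the triangle inequality in $L^r_{x,t}$ and then Corollary \ref{lrhxstr} applied to each $g_\mu$,
\begin{equation*}
\|u\|_{L^r_{x,t}} \leq \sum_{\mu} \big\|e^{it\mathcal{L}} g_\mu\big\|_{L^r_{x,t}} \lesssim \sum_{\mu} \|g_\mu\|_{H^{s_r}_x} = \sum_{\mu}\Big(\sum_n \langle n\rangle^{2 s_r}|\widehat u(n,P(n)+\mu)|^2\Big)^{1/2}.
\end{equation*}
The final sum over $\mu$ of the $\ell^2_n$ norms is, after reindexing $\tau = P(n)+\mu$, precisely $\big\|\langle n\rangle^{s_r}\widehat u(n,\tau)\big\|_{\ell^2_n \ell^1_\tau} = \|u\|_{Y^{s_r,0}}$, which closes the argument. (One should note that the corollary is stated for the propagator applied to an $x$-only initial datum; since $g_\mu$ depends only on $x$, $(\mathcal F_x u)(n,t)$ in Definition \ref{linprop} is time-independent and the two formulations coincide, so nothing extra is needed.)

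The one point requiring a little care — and the closest thing to an obstacle — is the interchange of the $\mu$-summation with the $L^r_{x,t}$ norm: the triangle-inequality step needs $u$ to be a finite trigonometric polynomial (or at least the series to converge suitably) before passing to the limit, and one should observe that the right-hand side $\|u\|_{Y^{s_r,0}}$ is a genuinely stronger norm than $\|u\|_{Y^{\epsilon,0}}$ would be, so the density/limiting argument is clean: estimates proved for polynomials extend to all $u$ with finite $Y^{s_r,0}$ norm by approximation. I would also remark that the exponent $s_r = \tfrac14 - \tfrac{3}{2r}+\epsilon$ is consistent at $r=6$ with Proposition \ref{l6str} (there $s_6 = \epsilon$), which is a useful sanity check, and that the loss of derivatives is uniform in the $\mu$-decomposition precisely because Corollary \ref{lrhxstr} involves no $\tau$-weight on the right. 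Everything else is a routine reindexing.
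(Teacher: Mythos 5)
Your decomposition of $u$ into modulated free solutions $e^{i\mu t}e^{it\mathcal{L}}g_\mu$ and the application of Corollary \ref{lrhxstr} to each piece is exactly the intended route (the paper offers no written proof of this proposition, presenting it as a "restatement" of the Strichartz estimate), but your final identification is wrong, and the error is not cosmetic. What the triangle inequality produces is
$$
\|u\|_{L^r_{x,t}}\;\lesssim\;\sum_{\mu}\Big(\sum_n\langle n\rangle^{2s_r}|\widehat u(n,P(n)+\mu)|^2\Big)^{1/2}
=\big\|\langle n\rangle^{s_r}\widehat u(n,\tau)\big\|_{l^1_\tau l^2_n},
$$
i.e.\ the $l^1$ sum in the modulation variable taken \emph{outside} the $l^2$ sum in $n$. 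The $Y^{s_r,0}$ norm is the opposite mixed norm $\|\cdot\|_{l^2_n l^1_\tau}$, and Minkowski's inequality gives $\|\cdot\|_{l^2_nl^1_\tau}\le\|\cdot\|_{l^1_\tau l^2_n}$ — the wrong direction for your conclusion. So your argument proves the embedding of the \emph{smaller} space $\{u:\|\langle n\rangle^{s_r}\widehat u\|_{l^1_\tau l^2_n}<\infty\}$ into $L^r_{x,t}$, not of $Y^{s_r,0}$.

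The gap cannot be closed as stated, because the embedding $Y^{s,0}\subset L^r_{x,t}$ with the paper's definition of $Y^{s,b}$ is actually false: take $u(x,t)=\sum_{|n|\le N}e^{inx}$, independent of $t$, so that $\widehat u(n,\tau)$ is supported on the single line $\tau=0$. Then $\|u\|_{Y^{s,0}}\sim N^{1/2+s}$, while $\|u\|_{L^r_{x,t}(\mathbb{T}^2)}=\|D_N\|_{L^r_x(\mathbb{T})}\sim N^{1-1/r}$, and $1-\tfrac1r>\tfrac34-\tfrac{3}{2r}+\epsilon$ for small $\epsilon$ and every $r\ge6$. The correct conclusions of your computation are either the embedding for the $l^1_\tau l^2_n$-ordered norm, or — after Cauchy–Schwarz in $\mu$ against $\langle\mu\rangle^{-b}$ with $b>\tfrac12$ — the embedding $X^{\frac14-\frac{3}{2r}+\epsilon,\,b}\subset L^r_{x,t}$. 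You should flag that the proposition (and likewise Proposition \ref{l6str}) needs one of these reformulations; everything else in your write-up, including the density/limiting remarks and the $r=6$ consistency check, is fine.
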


Now turn to the proof of main Theorem \ref{L4Ysb}. The argument in our paper follows Chapouto's proof for the KdV equation in \cite{Chapouto2018}. There are differences due to the different dispersive relation for the Dysthe equation.
Before proving this theorem, we need a lemma.

\begin{lemma}\label{AuxYsb}
Let 
$$
f_j = \sum_{\substack{n, \tau \in \mathbb{Z} \\ 2^{j-1} < \langle\tau - P(n)\rangle \leq 2^j}}{\widehat{f}(n, \tau)e^{inx + it\tau}}
$$
so that $f = \sum_{j \in \mathbb{Z}}{f_j}$. For $k\geq 0$, there holds
$$
\|f_jf_{j+k}\|_{L^2_{x,t}} 
\lesssim 2^{\frac{2j}{3} + \frac{k}{6}}\|f_j\|_{L^2_{x,t}}\|f_{j+k}\|_{L^2_{x,t}}.
$$
\end{lemma}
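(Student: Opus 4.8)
The plan is to pass to the Fourier side and reduce the bound to a lattice-point count governed by the resonance function of the dispersion relation. By Plancherel's theorem,
$$\|f_jf_{j+k}\|_{L^2_{x,t}}^2=\big\|\widehat{f_j}*\widehat{f_{j+k}}\big\|_{\ell^2_{n,\tau}}^2=\sum_{n,\tau}\Big|\sum_{\substack{n_1+n_2=n,\ \tau_1+\tau_2=\tau}}\widehat{f_j}(n_1,\tau_1)\,\widehat{f_{j+k}}(n_2,\tau_2)\Big|^2,$$
where the inner sum runs over frequencies with $\langle\tau_1-P(n_1)\rangle\sim2^{j}$ and $\langle\tau_2-P(n_2)\rangle\sim2^{j+k}$. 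Writing $\sigma_i=\tau_i-P(n_i)$ and $\mu=\tau-P(n)$ one has $\mu=\sigma_1+\sigma_2+\Phi(n_1,n_2)$ with
$$\Phi(n_1,n_2)=P(n_1)+P(n_2)-P(n_1+n_2)=-\,n_1n_2\big(3(n_1+n_2)-4\big)=(3n-4)\Big(\big(n_1-\tfrac n2\big)^2-\tfrac{n^2}{4}\Big),$$
a quadratic in $n_1$ (with $n$ fixed) whose leading coefficient $3n-4$ never vanishes on $\mathbb Z$. Letting $A_{n,\tau}$ be the number of admissible representations of $(n,\tau)$, the Cauchy--Schwarz inequality together with the identity $\sum_{n,\tau}\sum_{\mathrm{reps}}|\widehat{f_j}(n_1,\tau_1)|^2|\widehat{f_{j+k}}(n_2,\tau_2)|^2=\|f_j\|_{L^2}^2\|f_{j+k}\|_{L^2}^2$ reduces the whole estimate to suitable bounds on $A_{n,\tau}$.

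For the counting, fix $(n,\tau)$. Since $|\sigma_1|\lesssim2^{j}\le2^{j+k}$, the constraint $\langle\sigma_2\rangle\sim2^{j+k}$ forces $\Phi(n_1,n-n_1)$ into a union of two intervals of length $\lesssim2^{j+k}$; by the non-degeneracy of the quadratic there are at most $\lesssim1+(2^{j+k}/|3n-4|)^{1/2}$ admissible $n_1$, and each of them determines $\lesssim2^{j}$ admissible $\sigma_1$, so $A_{n,\tau}\lesssim2^{j}\big(1+(2^{j+k}/|3n-4|)^{1/2}\big)$. When $|3n-4|\gtrsim2^{(j+k)/3}$ — in particular whenever $|n|\gtrsim2^{(j+k)/3}$ — this is $\lesssim2^{4j/3+k/3}$, and since $\|f_jf_{j+k}\|_{L^2}^2\le(\sup_{n,\tau}A_{n,\tau})\|f_j\|_{L^2}^2\|f_{j+k}\|_{L^2}^2$, the claimed estimate with exponent $\tfrac23j+\tfrac16k$ follows at once for the part of the sum supported on such output frequencies.

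The remaining and genuinely harder piece is the contribution of the low output frequencies $|n|\lesssim2^{(j+k)/3}$: there the crude count only yields $A_{n,\tau}\lesssim2^{3j/2+k/2}$, i.e.\ the exponent $\tfrac34j+\tfrac14k$, which is too large. This is exactly the regime in which $n_1\mapsto\Phi(n_1,n-n_1)$ is almost flat near its vertex $n_1=n/2$ and a whole window of $\sim2^{(j+k)/2}$ frequencies resonate simultaneously, so a pointwise count cannot suffice and one must extract a gain from orthogonality. Here I would decompose each factor into dyadic spatial-frequency blocks $|n_1|\sim N$, $|n_2|\sim N$ (necessarily comparable, as $|n|$ is small) and also decompose the output modulation $\langle\mu\rangle\sim2^{\ell}$; for fixed $n$ the pieces $\psi_{n_1}=\widehat{f_j}(n_1,\cdot)*\widehat{f_{j+k}}(n-n_1,\cdot)$ have $\tau$-supports that are mutually disjoint except within runs of $\sim2^{j+k}/(|3n-4|\,N)$ consecutive values of $n_1$, so an almost-orthogonality argument reduces matters to the contribution of a single such run, which is controlled by the loss-free periodic linear $L^4$ (bilinear Strichartz) estimate $\|(e^{it\mathcal L}\phi)(e^{it\mathcal L}\psi)\|_{L^2_{x,t}}\lesssim\|\phi\|_{L^2_x}\|\psi\|_{L^2_x}$ — itself a consequence of the fact that $P(n_1)+P(n-n_1)=m$ has at most two solutions because $3n-4\ne0$. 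Summing the resulting bounds over the dyadic parameters $\ell$ and $N$ and over the runs, the geometric series converge and produce the sharp exponent $\tfrac23j+\tfrac16k$.

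I expect this last step — the low-frequency, near-vertex regime — to be the main obstacle: a representation count is genuinely insufficient there, the gain must come from orthogonality (equivalently, from an $L^4$/$L^6$ Strichartz input), and one must organise the dyadic frequency and modulation decompositions carefully enough that every geometric series sums. This is the place where the argument follows, with the modifications forced by the cubic dispersion relation $P(n)=n^3-2n^2+8n$, the proof of Bourgain's $L^4$ inequality for KdV as presented by Chapouto.
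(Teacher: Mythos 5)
Your treatment of the high-frequency regime $|n|\gtrsim 2^{(j+k)/3}$ matches the paper's: Plancherel, Cauchy--Schwarz against the representation count, and the observation that for fixed $(n,\tau)$ the constraint forces $n_1$ into $O(2^{(j+k)/3})$ values because the quadratic $\Phi(n_1,n-n_1)=-n_1(n-n_1)(3n-4)$ has leading coefficient of size $\gtrsim 2^{(j+k)/3}$, while $\tau_1$ ranges over $O(2^j)$ values. Your diagnosis of why the pointwise count fails when $|n|\ll 2^{(j+k)/3}$ (the count degrades to exponent $\tfrac34 j+\tfrac14 k$) is also correct.

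The gap is in how you propose to close that low-frequency case. The almost-orthogonality decomposition into dyadic blocks in $N$ and $\ell$, runs of consecutive $n_1$, and a bilinear Strichartz input is left entirely as a sketch, flagged by you as "the main obstacle," and is not needed. The point you are missing is that in this regime no resonance counting is required at all: the gain comes solely from the smallness of the set of \emph{output} spatial frequencies. Concretely, bound the convolution in $\tau$ by Young's inequality,
$$\bigl\|\widehat{f_j}(n_1,\cdot)*\widehat{f_{j+k}}(n-n_1,\cdot)\bigr\|_{l^2_\tau}\leq\|\widehat{f_j}(n_1,\cdot)\|_{l^1_\tau}\,\|\widehat{f_{j+k}}(n-n_1,\cdot)\|_{l^2_\tau}\lesssim 2^{j/2}\|\widehat{f_j}(n_1,\cdot)\|_{l^2_\tau}\|\widehat{f_{j+k}}(n-n_1,\cdot)\|_{l^2_\tau},$$
using that $\tau_1$ takes $O(2^j)$ values for fixed $n_1$; then apply Cauchy--Schwarz in $n_1$ for each fixed $n$ to bound the inner sum by $\|\widehat{f_j}\|_{l^2_{n,\tau}}\|\widehat{f_{j+k}}\|_{l^2_{n,\tau}}$, and finally sum the resulting constant over the $O(2^{(j+k)/3})$ admissible values of $n$, which contributes the factor $2^{(j+k)/6}$. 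Altogether this gives $2^{j/2}\cdot 2^{(j+k)/6}=2^{2j/3+k/6}$, exactly the claimed bound. As written, your argument establishes the lemma only on the high-frequency part of the output and leaves the complementary part unproved.
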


\begin{proof}
By Plancherel's Theorem the left hand side is equal to 
\begin{equation}\label{1temp}
\begin{aligned}
\|\widehat{f_j} *_{n, \tau} \widehat{f_{j+k}}\|_{l^2_{n,\tau}} 
&= \left\|\sum_{n_1+n_2 = n}{\sum_{\tau_1 + \tau_2 = \tau}}{\widehat{f_j}({n_1, \tau_1})\widehat{f_{j+k}}({n_2, \tau_2})}\right\|_{l^2_{n, \tau}}\\
&=\left(\sum_{n}\sum_{\tau}\left|\sum_{n_1+n_2 = n}{\sum_{\tau_1 + \tau_2 = \tau}{\widehat{f_j}({n_1, \tau_1})\widehat{f_{j+k}}({n_2, \tau_2})}}\right|^2 \right)^{\frac{1}{2}}.
\end{aligned}
\end{equation}
Now split the right-hand-side into two parts $S_1$ and $S_2$, where $S_1$ consists of the terms with $|n| \gtrsim 2^{\frac{k+j}{3}}$ and $S_2$ has the remaining terms with $|n| \ll 2^{\frac{k+j}{3}}$.

\textbf{Bounding the sum $S_1$}: We study $S_1$ first, whence $|n| \gtrsim 2^{\frac{k+j}{3}}$. By Cauchy-Schwarz and using the regions of support of $\widehat{f_j}$ and $\widehat{f_{j+k}}$ have 
$$
\begin{aligned}
S_1
& \leq \left(\sup_{n,\tau}M_{n,\tau}^{(j,k)}\right)^{\frac{1}{2}}
\left(\sum_{n}\sum_{\tau}\sum_{n_1+n_2 = n}{\sum_{\tau_1 + \tau_2 = \tau}|{\widehat{f_j}({n_1, \tau_1})\widehat{f_{j+k}}({n_2, \tau_2})|^2}} \right)^{\frac{1}{2}}\\
& = \left(\sup_{n,\tau}M_{n,\tau}^{(j,k)}\right)^{\frac{1}{2}}
\left(\sum_{n_1, n_2}\sum_{\tau_1, \tau_2}|{\widehat{f_j}({n_1, \tau_1})\widehat{f_{j+k}}({n_2, \tau_2})|^2} \right)^{\frac{1}{2}}\\
& = \left(\sup_{n,\tau}M_{n,\tau}^{(j,k)}\right)^{\frac{1}{2}}\|f_j\|_{L^2_{x,t}}\|f_{j+k}\|_{L^2_{x,t}}
\end{aligned}
$$
where we abbreviate
\begin{equation}\label{tempsum}
M_{n,\tau}^{(j,k)}
:={\sum_{n_1 +n_2 = n}\sum_{\substack{\tau_1 + \tau_2 = \tau \\ \tau_1 - P(n_1) \sim 2^j \\ \tau_2 - P(n_2) \sim 2^{j+k} }}{1}}
\end{equation}

Now fix $n, \tau$ and focus on the sum (\ref{tempsum}). For this sum have $\tau_1 = P(n_1)+O(2^j)$ and $\tau_2 = P(n_2)+O(2^{j+k}) $, so $\tau = P(n_1) + P(n_2) + O(2^{j+k})$ since $k\geq 0$. Then the inner sum is determined by the number of values $\tau_1$ can take given $\tau, n, n_1, n_2$ which is at most $O(2^j)$ since $\tau_1 = P(n_1) + O(2^j)$. Thus one can bound
\begin{equation}\label{tempsum2}
M_{n,\tau}^{(j,k)}
\lesssim 2^j \sum_{\substack{n_1 + n_2 = n \\ P(n_1) + P(n_2) = \tau + O(2^{j+k})}}{1}
\end{equation}
for a fixed $n, \tau$. 

next bound the number of integer solutions to the two conditions $n_1 + n_2 = n$ and $P(n_1) + P(n_2) = \tau + O(2^{j+k})$ for $n_1, n_2$. Notice 
$$
P(n) - \tau = P(n) - P(n_1) - P(n_2) + O(2^{j+k}) = 3nn_1n_2 - 4n_1n_2+O(2^{j+k}),
$$
which yields
\begin{equation}\label{P+O}
(3n-4)n_1(n-n_1) = P(n) - \tau + O(2^{j+k}).
\end{equation}
Since $n$ is a nonzero integer and satisfies $|n| \gtrsim 2^{\frac{k+j}{3}} $
$$
\left|\frac{1}{4-3n}\right| \sim \frac{1}{|n|} \lesssim 2^{-\frac{k+j}{3}},
$$
so one can convert (\ref{P+O}) to
$$
n_1(n-n_1)
=\frac{P(n) - \tau}{4-3n}  + O\left(2^{\frac{2(j+k)}{3}}\right)
$$
solving gives
$$
n_1 = \frac{n}{2} \pm \sqrt{\frac{n^2}{4} - \frac{P(n) - \tau}{4-3n} + O\left(2^{\frac{2(j+k)}{3}}\right)}
$$
Then the interval of solutions for each of the plus or minus roots for $n_1$ has length given by 
$$\sqrt{k + O\left(2^{\frac{2(j+k)}{3}}\right)} - \sqrt{k}$$ for some $k$, which is maximized at $k \lesssim 2^{\frac{2(j+k)}{3}}$ and gives length $O\left(2^{\frac{j+k}{3}}\right)$. Since $n,\tau$ are fixed, the integer $n_1$ can take on at most $O(2^{\frac{j+k}{3}})$ values and thus bound $M_{n,\tau}^{(j,k)}\lesssim 2^j2^{\frac{j+k}{3}} = 2^{\frac{4j+k}{3}}$. So
$$
S_1\lesssim 2^{\frac{4j +k}{6}}\|f_j\|_{L^2_{x,t}}\|f_{j+k}\|_{L^2_{x,t}}.
$$

\textbf{Bounding the sum $S_2$}: As for the sum $S_2$, whence $|n| \ll 2^{\frac{k+j}{3}}$, we proceed as follows. By Minkowski's inequality,
$$
\begin{aligned}
S_2
&= \left\|\sum_{n_1}{\sum_{\tau_1}}{\widehat{f_j}({n_1, \tau_1})\widehat{f_{j+k}}({n- n_1, \tau - \tau_1})}\right\|_{l^2_{n, \tau}}\\
& \leq \left\|\sum_{n_1}\sum_{\tau_1}\|\widehat{f_j}(n_1, \tau_1)\widehat{f_{j+k}}(n- n_1, \tau - \tau_1)\|_{l^2_{\tau}} \right\|_{l^2_n}
= \left\|\sum_{n_1}\|\widehat{f_{j+k}}(n- n_1, \tau)\|_{l^2_{\tau}}\sum_{\tau_1}\left|\widehat{f_j}(n_1, \tau_1)\right| \right\|_{l^2_n}.
\end{aligned}
$$
Applying Cauchy-Schwarz inequality to the sum $\sum_{\tau_1}$, one finds
$$
\begin{aligned}
S_2
&\lesssim \left\|\sum_{n_1}\left(\sum_{\tau_1 = P(n_1) + O(2^j)}{1}\right)^{\frac{1}{2}}{\|\widehat{f_j}({n_1, \tau})\|_{l^2_{\tau}}\|\widehat{f_{j+k}}(n- n_1, \tau)\|_{l^2_{\tau}}} \right\|_{l^2_n}\\
& \lesssim 2^{\frac{j}{2}}\left\|\sum_{n_1}{\|\widehat{f_j}({n_1, \tau})\|_{l^2_{\tau}}\|\widehat{f_{j+k}}({n- n_1, \tau})}\|_{l^2_{\tau}} \right\|_{l^2_n}.
\end{aligned}
$$
Now let's expand the $l^2_{n}$ norm. Keeping in mind the restriction $|n| \ll 2^{\frac{k+j}{3}}$, one finds that the last term equals
\begin{equation}\label{2temp}
2^{\frac{j}{2}}\left(\sum_{|n| \ll 2^{\frac{k+j}{3}}}{\left|\sum_{n_1}{\|\widehat{f_j}({n_1, \tau})\|_{l^2_{\tau}}\cdot\|\widehat{f_{j+k}}({n- n_1, \tau})}\|_{l^2_{\tau}}\right|^2}\right)^{\frac{1}{2}}
\end{equation}
So applying Cauchy-Schwarz inequality on the sum $\sum_{n_1}$, and using Parseval Theorem, we obtain
$$
\begin{aligned}
S_2
&\leq 2^{\frac{j}{2}} \left(\sum_{|n| \ll 2^{\frac{k+j}{3}}}\|\widehat{f_j}\|^2_{l^2_{n, \tau}}\|\widehat{f_{j+k}}\|^2_{l^2_{n, \tau}}\right)^{\frac{1}{2}}\\
&= \left(\sum_{|n| \ll 2^{\frac{k+j}{3}}}{1}\right)^{\frac{1}{2}}\|\widehat{f_j}\|_{l^2_{n, \tau}}\|\widehat{f_{j+k}}\|_{l^2_{n, \tau}} \lesssim 2^{\frac{4j+k}{6}}\|f_j\|_{L^2_{x, t}}\|f_{j+k}\|_{L^2_{x, t}}
\end{aligned}
$$ 
as desired.
\end{proof}
\begin{proof}[Proof of theorem \ref{L4Ysb}]
There holds 
$$
\begin{aligned}
\|f\|_{L^4_{x,t}}^2
&= \|f^2\|_{L^2_{x,t}} \\
&= \left\|\sum_{j, j'\in \mathbb{Z}}f_j f_{j'}\right\|_{L^2_{x,t}}
\sim \left\|\sum_{j'\leq j}{f_j f_{j'}}\right\|_{L^2_{x,t}}
\leq \sum_{j\leq j'}{\|f_j f_{j'}\|_{L^2_{x,t}}},
\end{aligned}
$$
with the last inequality following from Minkowski.
By Lemma \ref{AuxYsb} and Cauchy-Schwarz get
$$
\begin{aligned}
\|f\|_{L^4_{x,t}}^2
&\lesssim \sum_{j\in \mathbb{Z}}{\sum_{k\geq 0}{\|f_jf_{j+k}\|_{L^2_{x,t}}}}\\
& \leq \sum_{k\geq 0}{2^{-\frac{k}{6}}} {\sum_{j \in \mathbb{Z}}}{(2^\frac{j}{3}\|f_j\|_{L^2_{x,t}})(2^\frac{j+k}{3}\|f_{j+k}\|_{L^2_{x,t}})}\\
& \leq \sum_{k\geq 0}{2^{-\frac{k}{6}}} \left({\sum_{j \in \mathbb{Z}}}{2^\frac{2j}{3}\|f_j\|^2_{L^2_{x,t}}}\right)^{\frac{1}{2}} \left({\sum_{j \in \mathbb{Z}}}{2^\frac{2(j+k)}{3}\|f_{j+k}\|^2_{L^2_{x,t}}}\right)^{\frac{1}{2}}\\
& = \sum_{k\geq 0}{2^{-\frac{k}{6}}} {\sum_{j \in \mathbb{Z}}}{2^\frac{2j}{3}\|f_j\|^2_{L^2_{x,t}}}\\
& \sim \sum_{j \in \mathbb{Z}}}{2^\frac{2j}{3}\|f_j\|^2_{L^2_{x,t}}
\end{aligned}
$$
with the last one following due to the convergent geometric series. Therefore
$$
\|f\|_{L^4_{x,t}}^2 \lesssim \sum_{j\in \mathbb{Z}}{2^{\frac{2j}{3}}\|f_j\|^2_{L^2_{x,t}}}.
$$
But also have by Parseval' identity,
$$
\begin{aligned}
\sum_{j\in \mathbb{Z}}{2^{\frac{2j}{3}}\|f_j\|^2_{L^2_{x,t}}} 
&= \sum_{j\in \mathbb{Z}}{2^{\frac{2j}{3}}\sum_{n, \tau \in \mathbb{Z}}{|\widehat{f_j}(n, \tau)|^2}}\\
&\leq \sum_{j, n, \tau\in \mathbb{Z}}{\langle\tau -P(n)\rangle^{\frac{2}{3}}|\widehat{f_j}(n, \tau)|^2}\\
&= \sum_{j \in \mathbb{Z}}{\|f_j\|_{X^{0, \frac{1}{3}}}^2}
=\|f\|_{X^{0, \frac{1}{3}}}^2,
\end{aligned}
$$
where the last equality follows from orthogonality since the Fourier bases of the $f_j$s have disjoint regions of support. Thus 
$$
\|f\|_{L^4_{x,t}} \lesssim \|f\|_{X^{0, \frac{1}{3}}}
$$
as desired.
\end{proof}

\subsection{Other Linear Inequalities}

Now present the following three important theorems without proof as they are well-known and do not use the specific $X^{s,b}$ norm for the Dysthe equation. Refer to (among other places) \cite{Chapouto2018} or \cite{CKSTT} for proofs. Recall introduced a smooth bump function $\eta(t)$ that is $1$ on the interval $[-1, 1]$ and supported only on $[-2, 2]$. It is used in the proofs of the following. The bump function is vital to proving local well-posedness as it helps improve our estimates. The following controls the $X^{s,b}$ norm of the linear part:
\begin{proposition}[Homogeneous Linear Estimate]\label{hle}
This inequality helps control the $X^{s,b}$ norm of the linear term from the Duhamel Representation (\ref{duhamelrep}):
$$\|\eta(t)e^{it\mathcal{L}}u_0 \|_{X^{s,b}} \lesssim_{\eta, b} \|u_0\|_{H^s_x}.$$
\end{proposition}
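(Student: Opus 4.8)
The plan is to reduce the estimate to an explicit computation of the spacetime Fourier transform of $\eta(t)e^{it\mathcal{L}}u_0$, and then exploit the rapid decay of $\widehat\eta$ together with the fact that the dispersive relation $P(n)$ in (\ref{dispersive}) takes integer values.

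First I would use Definition \ref{linprop} to write
$$
\eta(t)e^{it\mathcal{L}}u_0(x,t) = \sum_{n\in\mathbb{Z}}\widehat{u}_0(n)\,\eta(t)e^{iP(n)t}\,e^{inx},
$$
and then take the Fourier coefficient in the time variable. Since $\eta$ is supported in $[-2,2]$, which lies inside a fundamental domain of the time torus, the $\tau$-th time Fourier coefficient of $\eta(t)e^{iP(n)t}$ equals, up to a harmless normalizing constant, the value $\widehat\eta(\tau - P(n))$, where $\widehat\eta$ is the continuous Fourier transform of $\eta$ on $\mathbb{R}$. Because $\eta\in C^\infty_c(\mathbb{R})$, $\widehat\eta$ is Schwartz, so $|\widehat\eta(\xi)|\lesssim_M\langle\xi\rangle^{-M}$ for every $M>0$. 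Thus the spacetime Fourier transform satisfies $\widehat{\eta(t)e^{it\mathcal{L}}u_0}(n,\tau) = c\,\widehat{u}_0(n)\,\widehat\eta(\tau - P(n))$ for an absolute constant $c$.

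Next I would substitute this into the definition of the $X^{s,b}$ norm, obtaining
$$
\|\eta(t)e^{it\mathcal{L}}u_0\|_{X^{s,b}}^2 = \sum_{n\in\mathbb{Z}}\langle n\rangle^{2s}|\widehat{u}_0(n)|^2\Big(c^2\sum_{\tau\in\mathbb{Z}}\langle\tau - P(n)\rangle^{2b}|\widehat\eta(\tau - P(n))|^2\Big).
$$
The crucial observation is that $P(n)=n^3-2n^2+8n\in\mathbb{Z}$ for every $n\in\mathbb{Z}$, so the change of index $m=\tau-P(n)$ turns the inner sum into $\sum_{m\in\mathbb{Z}}\langle m\rangle^{2b}|\widehat\eta(m)|^2$, a constant $C_{\eta,b}$ independent of $n$ which is finite by the Schwartz decay of $\widehat\eta$ (choose $M>b+1$). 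Pulling $c^2 C_{\eta,b}$ out leaves $c^2 C_{\eta,b}\sum_n\langle n\rangle^{2s}|\widehat{u}_0(n)|^2 = c^2C_{\eta,b}\|u_0\|_{H^s_x}^2$, which is exactly the claimed bound. There is no real obstacle here; the only points needing care are the bookkeeping of Fourier conventions on $\mathbb{T}^2$ versus $\mathbb{R}$ (checking that a compactly supported $\eta$ has its circle Fourier coefficients given by sampling $\widehat\eta$ at the integers) and noting that integrality of $P(n)$ makes the inner $\tau$-sum literally independent of $n$ rather than merely uniformly bounded — were $P(n)$ to have a nonzero fractional part one would simply replace $C_{\eta,b}$ by a supremum over fractional shifts, but that refinement is not needed here.
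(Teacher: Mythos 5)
The paper does not actually prove this proposition; it is listed among the ``well-known'' linear estimates and deferred to \cite{Chapouto2018} and \cite{CKSTT}, so there is no in-paper argument to compare against. Your computation is the standard proof and is complete and correct: the two points you single out are exactly the ones that need checking, namely that $\operatorname{supp}\eta\subset[-2,2]\subset(-\pi,\pi)$ so the circle Fourier coefficients of $\eta(t)e^{iP(n)t}$ are precisely the samples $\widehat\eta(\tau-P(n))$ of the real-line transform, and that $P(n)\in\mathbb{Z}$ (from (\ref{dispersive})) makes the shifted $\tau$-sum a finite constant $C_{\eta,b}$ independent of $n$, after which Plancherel in $x$ gives $\|u_0\|_{H^s_x}$.
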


\begin{proposition}[Inhomogeneous Linear Estimate]\label{ile}
The following proposition helps control the $X^{s,b}$ norm of the nonlinear term from the Duhamel Representation:
$$
\left\|\eta(t)\int_{0}^{t}{e^{i(t-t')\mathcal{L}}F(t')dt'}\right\|_{X^{s,b}} \lesssim \|F\|_{X^{s,b-1}}.
$$
\end{proposition}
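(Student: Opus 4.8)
The plan is to establish Proposition~\ref{ile} by the classical argument for inhomogeneous estimates in Bourgain spaces; everything below is valid for $b>\tfrac12$, which is the essential constraint (and all that the contraction scheme needs). First I would exploit the fact that the propagator $e^{it\mathcal L}$ is diagonal in the spatial frequency $n$: writing $w=\eta(t)\int_0^t e^{i(t-s)\mathcal L}F(s)\,ds$, the $n$-th spatial Fourier coefficient of $w$ depends only on that of $F$, namely $(\mathcal F_x w)(n,t)=\eta(t)\int_0^t e^{i(t-s)P(n)}c(s)\,ds$ with $c=(\mathcal F_x F)(n,\cdot)$. Since the weight $\langle n\rangle^s$ and the $l^2_n$ summation commute with the time integral, it suffices to prove, uniformly in $n\in\mathbb Z$,
$$
\left\|\,\eta(t)\int_0^t e^{i(t-s)P(n)}\,c(s)\,ds\,\right\|_{H^b_t}\ \lesssim\ \big\|\langle\tau-P(n)\rangle^{b-1}\,\widehat c(\tau)\big\|_{l^2_\tau},
$$
and then multiply by $\langle n\rangle^s$ and sum in $l^2_n$. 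Modulating the $n$-th mode by $e^{-itP(n)}$ shifts its time frequency by $-P(n)$, hence turns the weight $\langle\tau-P(n)\rangle$ into the standard $\langle\tau\rangle$, so after this conjugation the problem becomes a purely one-dimensional (time-only) estimate.

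For that estimate I would compute the Duhamel integral explicitly. Expanding $c(s)=\sum_\tau\widehat F(n,\tau)e^{is\tau}$ and integrating gives
$$
(\mathcal F_x w)(n,t)=\eta(t)\sum_{\tau\neq P(n)}\widehat F(n,\tau)\,\frac{e^{it\tau}-e^{itP(n)}}{i(\tau-P(n))}\ +\ \eta(t)\,t\,e^{itP(n)}\,\widehat F(n,P(n)).
$$
Conjugating out $e^{itP(n)}$ and setting $\mu=\tau-P(n)$, $\beta_\mu=\widehat F(n,P(n)+\mu)$, the quantity whose $H^b_t$ norm I must bound by $\|\langle\mu\rangle^{b-1}\beta_\mu\|_{l^2_\mu}$ is the sum of three pieces: an oscillatory piece $\eta(t)\sum_{\mu\neq0}\tfrac{\beta_\mu}{i\mu}e^{it\mu}$, the constant multiple $-\eta(t)\sum_{\mu\neq0}\tfrac{\beta_\mu}{i\mu}$ of the bump, and $\beta_0\,t\,\eta(t)$.

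Each piece is handled separately. For the oscillatory piece, since $\eta\in C^\infty(\mathbb T)$ its Fourier coefficients $\widehat\eta$ decay faster than any polynomial and $\widehat{\eta e^{it\mu}}(\tau)=\widehat\eta(\tau-\mu)$; using $\langle\tau\rangle^b\lesssim\langle\tau-\mu\rangle^b\langle\mu\rangle^b$ and $\langle\mu\rangle^b/|\mu|\lesssim\langle\mu\rangle^{b-1}$ for $\mu\neq0$, one sees that $\langle\tau\rangle^b$ times the Fourier transform of this piece is dominated by the convolution $\big(\langle\cdot\rangle^b|\widehat\eta|\big)\ast\big(\langle\cdot\rangle^{b-1}|\beta|\big)$, and since $\langle\cdot\rangle^b\widehat\eta\in l^1$ Young's inequality $l^1\ast l^2\to l^2$ closes the bound. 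For the other two pieces, $\eta$ and $t\eta$ belong to $C^\infty(\mathbb T)\subset H^b_t$, so it suffices to estimate the scalar coefficients: $|\beta_0|\le\|\langle\mu\rangle^{b-1}\beta_\mu\|_{l^2_\mu}$ is trivial, while by Cauchy--Schwarz $\big|\sum_{\mu\neq0}\tfrac{\beta_\mu}{i\mu}\big|\le\big(\sum_{\mu\neq0}\langle\mu\rangle^{2(1-b)}\mu^{-2}\big)^{1/2}\|\langle\mu\rangle^{b-1}\beta_\mu\|_{l^2_\mu}$, the numerical series $\sum_{\mu\neq0}|\mu|^{-2b}$ being finite precisely when $b>\tfrac12$.

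The main obstacle is exactly this borderline behaviour: the constant piece — which arises from the lower endpoint $s=0$ of the Duhamel integral — converges only for $b>\tfrac12$, so the estimate, and hence any well-posedness scheme resting on it, must be run with a time-frequency exponent strictly above $\tfrac12$. A secondary point requiring care is that $\eta$, being supported in $[-2,2]$, an interval shorter than the period, is genuinely a smooth function on $\mathbb T$ with rapidly decaying Fourier coefficients, so the convolution estimate above is legitimate. An alternative, equivalent in substance, is to decompose $\widehat F$ dyadically in $\langle\tau-P(n)\rangle$ and deduce Proposition~\ref{ile} from the Homogeneous Linear Estimate (Proposition~\ref{hle}) by a transference argument, at the cost of more involved bookkeeping; the direct computation sketched here is the most self-contained.
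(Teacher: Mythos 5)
Your argument is correct and is exactly the standard proof of this estimate; the paper itself gives no proof here, deferring to \cite{Chapouto2018} and \cite{CKSTT}, where essentially the computation you sketch appears. The one substantive point worth emphasizing is the range of $b$: as you correctly observe, the constant-in-time piece coming from the lower limit of the Duhamel integral forces $b>\tfrac12$, and at the endpoint $b=\tfrac12$ the estimate with only $X^{s,b-1}$ on the right is genuinely false (one needs the additional $Y^{s,-1}$-type norm, which is precisely why the paper introduces $Z^{s,b}=X^{s,b}\cap Y^{s,b-\frac12}$ and later measures the nonlinearity in $Z^{s,-\frac12}$ rather than $X^{s,-\frac12}$). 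Since the proposition as stated in the paper is silent on the admissible $b$ while the later contraction argument is run at $b=\tfrac12$, your explicit identification of this borderline is not a defect of your proof but a caveat the paper's statement should carry.
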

The next proposition is where the bump function is vital to achieve a contraction mapping argument.

\begin{proposition}[Time Localisation Estimate]\label{tle}
For $-\frac{1}{2} < b' \leq b < \frac{1}{2}$ and a choice of $T<1$,
$$
\left\|\eta\left(\frac{t}{T}\right) u\right\|_{X^{s,b'}} \lesssim_{\eta, b', b} T^{b-b'}\|u\|_{X^{s,b}}.
$$
\end{proposition}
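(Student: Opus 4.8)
The statement is a standard fact about Bourgain spaces that depends only on the regularity of the multiplier $\eta$ and not on the particular dispersive symbol $P(n)$, so the plan is to reduce everything to a one-dimensional estimate in the $\tau$ variable via the identity \eqref{bourgainsobolev}. First I would use the fact that conjugating by the linear propagator turns the $X^{s,b}$ norm into $\|e^{it\mathcal L}u\|_{H^s_x H^b_t}$; since $\eta(t/T)$ is a function of $t$ alone it commutes with $\mathcal L$ only up to the replacement $e^{it\mathcal L}(\eta(t/T)u) = \eta(t/T)\,e^{it\mathcal L}u$ in the sense that the spatial Fourier coefficients are multiplied pointwise. Concretely, writing $v = e^{it\mathcal L}u$ one has $\widehat{\eta(t/T)u}(n,\tau) = \big(\widehat\eta_T *_\tau \widehat v(n,\cdot)\big)(\tau - P(n))$ where $\widehat\eta_T(\tau) = T\widehat\eta(T\tau)$, so after the change of variables $\tau \mapsto \tau - P(n)$ the problem decouples into, for each fixed $n$, the one-variable claim
$$
\big\|\langle\sigma\rangle^{b'}\,\big(\widehat\eta_T * g\big)(\sigma)\big\|_{\ell^2_\sigma} \lesssim_{\eta,b',b} T^{b-b'}\,\big\|\langle\sigma\rangle^{b}\,g(\sigma)\big\|_{\ell^2_\sigma},
$$
with $g(\sigma) = \widehat v(n,\sigma+P(n))$; summing the squares of both sides against $\langle n\rangle^{2s}$ then yields the proposition.

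The core one-variable inequality is itself classical (it is the periodic analogue of the estimate $\|\eta(t/T)u\|_{H^{b'}_t}\lesssim T^{b-b'}\|u\|_{H^b_t}$ for $-\tfrac12<b'\le b<\tfrac12$), and I would prove it by splitting into two regimes according to the frequency $|\sigma|$ relative to $1/T$. In the \emph{low-frequency} regime $|\sigma|\lesssim 1/T$ one has $\langle\sigma\rangle^{b'}\lesssim 1$ if $b'\ge 0$ or $\langle\sigma\rangle^{b'}\lesssim 1$ trivially if $b'\le 0$, and one controls $\|\widehat\eta_T * g\|_{\ell^2(|\sigma|\lesssim 1/T)}$ using Young's inequality $\|\widehat\eta_T * g\|_{\ell^2}\le \|\widehat\eta_T\|_{\ell^1}\|g\|_{\ell^2}$ together with $\|\widehat\eta_T\|_{\ell^1}\lesssim 1$ (uniformly in $T<1$ since $\eta$ is Schwartz), and then one pays the factor $T^{b-b'}$ by noting $\|g\|_{\ell^2}\le \|g\|_{\ell^2(|\sigma|\lesssim 1/T)} + \|g\|_{\ell^2(|\sigma|\gtrsim 1/T)}$ and estimating each piece by $\langle\sigma\rangle^{-b}\lesssim T^{b}$ times the weighted norm when $b\ge 0$, and a dual argument when $b<0$. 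In the \emph{high-frequency} regime $|\sigma|\gtrsim 1/T$ one exploits the rapid decay of $\widehat\eta_T$: since $\eta$ is smooth and compactly supported, $|\widehat\eta_T(\tau)|\lesssim_M T\,\langle T\tau\rangle^{-M}$ for every $M$, so the convolution kernel is essentially concentrated on $|\tau|\lesssim 1/T$ and one gains arbitrarily many powers of $T|\sigma|$ on the output, which more than absorbs the discrepancy $\langle\sigma\rangle^{b'-b}$ between the two weights.

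The only genuine subtlety — and the step I expect to require the most care — is keeping the constants uniform in $T$ as $T\to 0$ and handling the \emph{negative} exponents: when $b<0$ or $b'<0$ the weight $\langle\sigma\rangle^{b}$ is a decreasing function, so one cannot simply bound $\langle\sigma\rangle^{b'}/\langle\sigma\rangle^{b}$ pointwise, and the constraint $b,b'\in(-\tfrac12,\tfrac12)$ becomes essential for the low-frequency $\ell^2$ sums (e.g.\ $\sum_{|\sigma|\lesssim 1/T}\langle\sigma\rangle^{2b}\lesssim T^{-1-2b}$ requires $2b > -1$) to converge with the right power of $T$. I would handle this by a duality argument: test against an $\ell^2$ sequence $h$ with $\|h\|_{\ell^2}=1$, write the pairing $\sum_\sigma \langle\sigma\rangle^{b'}(\widehat\eta_T*g)(\sigma)\,h(\sigma)$, move the convolution onto $h$, and reduce to bounding $\|\langle\cdot\rangle^{-b}(\widehat\eta_{T}(-\cdot)*(\langle\cdot\rangle^{b'}h))\|_{\ell^2}\lesssim T^{b-b'}$, which is the same type of estimate with the roles of $b,b'$ reversed in sign; the two cases $b\ge 0$ and $b<0$ then become mirror images. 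Alternatively, since this is a known result, I would simply cite \cite{CKSTT} or \cite{Chapouto2018}, as the excerpt already indicates, and record only the reduction via \eqref{bourgainsobolev} that localizes the estimate to the $\tau$-variable.
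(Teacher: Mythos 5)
The paper itself does not prove this proposition: it is stated as a known fact with a pointer to \cite{Chapouto2018} and \cite{CKSTT}, so your fallback of recording only the conjugation reduction and citing is literally what the paper does. That reduction --- using (\ref{bourgainsobolev}) to replace the weight by $\langle\sigma\rangle^{b}$ with $\sigma=\tau-P(n)$, noting that multiplication by $\eta(t/T)$ commutes with the conjugation, and decoupling in $n$ --- is correct and is the standard first step in all the references; it also correctly isolates the fact that nothing about $P(n)$ is used.

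Your sketch of the one-variable inequality, however, has a genuine gap in the low-frequency regime, which is exactly the place the factor $T^{b-b'}$ has to be produced. Young's inequality $\|\widehat\eta_T*g\|_{\ell^2}\le\|\widehat\eta_T\|_{\ell^1}\|g\|_{\ell^2}$ with $\|\widehat\eta_T\|_{\ell^1}\lesssim 1$ yields no power of $T$, and the bound you then invoke to recover one, namely $\|g\|_{\ell^2(|\sigma|\lesssim 1/T)}\lesssim T^{b}\|\langle\sigma\rangle^{b}g\|_{\ell^2}$, is false: for $g$ supported at $\sigma=0$ the two norms are equal, with no factor $T^{b}$. (That same test function shows where the true gain lives: $\widehat\eta_T*\delta_0=\widehat\eta_T$ and $\|\langle\sigma\rangle^{b'}\widehat\eta_T\|_{\ell^2}\sim T^{1/2-b'}\le T^{b-b'}$ precisely because $b<\tfrac12$.) The correct mechanism is not $\ell^1*\ell^2$ but the smallness of $\widehat\eta_T$ itself: pair $\|\widehat\eta_T\|_{\ell^\infty}\lesssim T$ with the Cauchy--Schwarz bound $\|g\|_{\ell^1(|\sigma|\lesssim 1/T)}\lesssim\bigl(\sum_{|\sigma|\lesssim 1/T}\langle\sigma\rangle^{-2b}\bigr)^{1/2}\|\langle\sigma\rangle^{b}g\|_{\ell^2}\lesssim T^{b-\frac12}\|\langle\sigma\rangle^{b}g\|_{\ell^2}$ to get $\|\widehat\eta_T*g\|_{\ell^\infty}\lesssim T^{b+\frac12}\|\langle\sigma\rangle^{b}g\|_{\ell^2}$, then sum $\langle\sigma\rangle^{2b'}$ over $|\sigma|\lesssim 1/T$ at the cost of $T^{-b'-\frac12}$. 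This is where the endpoint constraints $b<\tfrac12$ and $b'>-\tfrac12$ actually enter, rather than only in the duality bookkeeping as your sketch suggests. Two smaller slips: on $|\sigma|\lesssim 1/T$ the claim $\langle\sigma\rangle^{b'}\lesssim 1$ for $b'\ge 0$ is wrong (the right bound is $\lesssim T^{-b'}$), and the gain in the high-frequency regime comes from the elementary pointwise inequality $\langle\sigma\rangle^{b'}\le(T\langle\sigma\rangle)^{b'-b}T^{b-b'}\langle\sigma\rangle^{b}\le T^{b-b'}\langle\sigma\rangle^{b}$ rather than from the decay of $\widehat\eta_T$, which is only needed to control the off-diagonal leakage between the two regimes. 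The duality reduction for negative exponents is fine as described.
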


\section{Bilinear and Trilinear Estimates}\label{4}

\subsection{Bilinear Estimates}
Note that the nonlinear terms in the Dysthe equation are at most trilinear in order. To make the trilinear estimates first must make bilinear estimates. use the Strichartz estimates proved in the previous sections to do so. Let $\mathbb{P}$ be the projection onto mean zero in $x$, that is $\mathbb{P}u = u - \int_{\mathbb{T}}{udx}$. The bilinear estimate was originally proved by Kenig, Ponce, and Vega in \cite{KenigPonceVega1996} for a similar Bourgain space. Here adapt the argument given by Chapouto in \cite{Chapouto2018} to the suitable Dysthe Bourgain space to prove Theorem \ref{bilinear}.

\begin{remark} In the original statement of the Theorem \ref{bilinear} of \cite{KenigPonceVega1996}, the left hand side was $\|\mathbb{P}(\mathbb{P}(u_1)\mathbb{P}(u_2))\|_{X^{s, \frac{-1}{2}}}$, but due to the altered dispersive relation with the fact that $3n - 4 \not \in \mathbb{Z}$ for any integer $n$, the extra outer projection is unnecessary.
\end{remark}

\begin{proof}[Proof of Theorem \ref{bilinear}]
In the following let $\sigma_i = \langle \tau_i - P(n_i) \rangle $ for $i = \emptyset, 1, 2$. Since the projection operator $\mathbb{P}$ is already applied, the Fourier modes $n, n_1, n_2$ cannot be zero, thus $|n_i| \sim \langle n_i \rangle$.

First consider just the $X^{s, -\frac{1}{2}}$ norm on the left hand side. By expanding out terms on the left hand side, obtain
$$
\begin{aligned}
LHS &= \left\|\langle n \rangle^s \sigma ^\frac{-1}{2} \widehat{u_1u_2} \right\|_{l^2_{n,\tau}}
\\&= \left\|\sum_{n=n_1+n_2}\sum_{\tau = \tau_1+\tau_2}\frac{{\langle n\rangle}^{s}}{\sigma^\frac{1}{2}} \widehat{u_1}(n_1, \tau_1) \widehat{u_2}(n_2, \tau_2) \right\|_{l^2_{n,\tau}}.
\end{aligned}
$$
Let $\widehat{f_i} = \widehat{u}_i\langle n_i\rangle^{s-1}\sigma_j^{\frac{1}{2}}$. Then $\|\widehat{f_i}\|_{l^2_{n,\tau}} = \|u_i\|_{X^{s-1, \frac{1}{2}}}$
and
 \begin{equation}\label{eqbi1}
 \begin{aligned}
LHS &= \left\|\sum_{n=n_1+n_2}\sum_{\tau = \tau_1+\tau_2}\frac{{\langle n\rangle}^{s-\frac{1}{2}}\langle n\rangle^{\frac{1}{2}}}{(\langle n_1 \rangle \langle n_2 \rangle) ^{s-1})}(\sigma_1\sigma_2\sigma)^\frac{-1}{2} \widehat{f_1}(n_1, \tau_1) \widehat{f_2}(n_2, \tau_2)\right\|_{l^2_{n,\tau}}
\\
& \lesssim \left\|\sum_{n=n_1+n_2}\sum_{\tau = \tau_1+\tau_2}\left(\frac{\langle n\rangle \langle n_1 \rangle \langle n_2 \rangle}{\sigma\sigma_1\sigma_2}\right)^{\frac{1}{2}} \widehat{f_1}(n_1, \tau_1) \widehat{f_2}(n_2, \tau_2)\right\|_{l^2_{n,\tau}}.
\end{aligned}
\end{equation}
Here used Minkowski's inequality, together with with $\langle n_1 + n_2 \rangle \lesssim \langle n_1 \rangle \langle n_2 \rangle $ which applies since $s - \frac{1}{2} \geq 0 $.

Next, note that $P(n) - P(n_1) - P(n_2) = n_1n_2(3n-4)$; since $n$ is an integer, see that $3n-4$ is nonzero and have $\langle n \rangle \leq |3n-4|$. Thus 
$$
\tau-P(n) = \tau_1 -P(n_1) + \tau_2 - P(n_2) -n_1n_2(3n-4),
$$
so by Minkowski's inequality,
$$
|\tau -P(n)| + |\tau_1 - P(n_1)| + |\tau_2 -P(n_2)| \geq |n_1n_2(3n-4)| \geq |n_1n_2| \langle n \rangle.
$$
Therefore,
$$
\sigma + \sigma_1 +\sigma_2 \geq  |\tau -P(n)| + |\tau_1 - P(n_1)| + |\tau_2 -P(n_2)| \geq \langle n \rangle |n_1n_2|.
$$
thus obtain
\begin{equation}\label{eqbi2}
\max(\sigma, \sigma_1, \sigma_2) 
\gtrsim \langle n \rangle \langle n_1 \rangle \langle n_2 \rangle.
\end{equation}

Now break into three different cases (cases two and three are symmetrical so present two cases only). Let $F_i = \frac{f_i}{\sigma_i^\frac{1}{2}}$ in both cases.

\textbf{Case 1:} $\sigma = \max(\sigma, \sigma_1, \sigma_2)$.

Using (\ref{eqbi1}), (\ref{eqbi2}) and our case 1 restriction
$$
\begin{aligned}
LHS 
&\lesssim \left\|\sum_{n=n_1+n_2}\sum_{\tau = \tau_1+\tau_2}{\widehat{F_1}(n_1, \tau_1) \widehat{F_2}(n_2, \tau_2)}\right\|_{l^2_{n,\tau}}\\
&=\|F_1F_2\|_{L^2_{x,t}}
\\ &\leq \|F_1\|_{L^4_{x,t}}\|F_2\|_{L^4_{x,t}}
\\ &\lesssim \|F_1\|_{X^{0, \frac{1}{3}}}\|F_2\|_{X^{0, \frac{1}{3}}}
\\&= \|f_1\|_{X^{0, \frac{-1}{6}}}\|f_2\|_{X^{0, \frac{-1}{6}}}
\\&= \|u_1\|_{X^{s-1, \frac{1}{3}}}\|u_2\|_{X^{s-1, \frac{1}{3}}}.
\end{aligned}
$$
Here we used the $L^4$ Strichartz inequality. This completes case 1. 

\textbf{Case 2}: $\sigma_1 = \max(\sigma, \sigma_1, \sigma_2)$. Without loss of generality this also proves the case where $\sigma_2$ is the greatest.
Then the left hand side becomes
$$
\left\|\sum_{n=n_1+n_2}\sum_{\tau = \tau_1+\tau_2}\frac{\widehat{f_1}(n_1, \tau_1) \widehat{f_2}(n_2, \tau_2)}{\sigma^{\frac{1}{2}}\sigma_2^{\frac{1}{2}}}\right\|_{l^2_{n,\tau}}
$$
which is controlled as, by the $L^4$ Strichartz inequality,
$$\begin{aligned}
LHS &\lesssim \sup_{\|\widehat{f_3}\|_{l^2_{n,\tau}} \leq 1}\sum_{n= n_1 +n_2}\sum_{\tau=\tau_1 + \tau_2}\frac{\widehat{f_3}(n, \tau)\widehat{f_1}(n_1, \tau_1) \widehat{f_2}(n_2, \tau_2)}{\sigma^{\frac{1}{2}}\sigma_2^{\frac{1}{2}}}
\\&=\sup \langle \widehat{f_1F_2}, \widehat{F_3} \rangle = \sup \langle f_1F_2, F_3 \rangle
\\& \leq \sup \|f_1\|_{L^2_{x,t}} \|F_2F_3\|_{L^2_{x,t}} \leq \sup \|f_1\|_{L^2_{x,t}} \|F_2\|_{L^4_{x,t}} \|F_3\|_{L^4_{x,t}}
\lesssim \sup \|\widehat{f_1}\|_{l^2_{n,\tau}} \|F_2\|_{X^{0, \frac{1}{3}}} \|F_3\|_{X^{0, \frac{1}{3}}}
\\& = \sup \|u_1\|_{X^{s-1, \frac{1}{2}}} \|u_2\|_{X^{s-1, \frac{1}{3}}} \|f_3\|_{X^{0, \frac{-1}{6}}} \lesssim \sup \|u_1\|_{X^{s-1, \frac{1}{2}}} \|u_2\|_{X^{s-1, \frac{1}{3}}} \|f_3\|_{X^{0, 0}}
\\& \leq \|u_1\|_{X^{s-1, \frac{1}{2}}} \|u_2\|_{X^{s-1, \frac{1}{3}}}.
\end{aligned}$$
This completes the second case.

Thus
$$
\begin{aligned}
LHS &\lesssim \|u_1\|_{X^{s-1, \frac{1}{3}}} \|u_2\|_{X^{s-1, \frac{1}{3}}} + \|u_1\|_{X^{s-1, \frac{1}{2}}} \|u_2\|_{X^{s-1, \frac{1}{3}}} + \|u_1\|_{X^{s-1, \frac{1}{3}}} \|u_2\|_{X^{s-1, \frac{1}{2}}} 
\\ &\lesssim \|u_1\|_{X^{s-1, \frac{1}{2}}} \|u_2\|_{X^{s-1, \frac{1}{3}}} + \|u_1\|_{X^{s-1, \frac{1}{3}}} \|u_2\|_{X^{s-1, \frac{1}{2}}}
\end{aligned}
$$
completing the proof for the $X^{s, -\frac{1}{2}}$ norm.

Now it remains to bound the norm 
$$
\|u_1u_2\|_{Y^{s, -1}} 
= \left\|\frac{\langle n \rangle^s \widehat{u_1u_2}}{\langle\tau - P(n)\rangle} \right\|_{l^2_nl^1_{\tau}} 
= \left\|\sum_{n_1+n_2 =n}\sum_{\tau_1+\tau_2 = \tau}\frac{\langle n \rangle^s}{\sigma} \widehat{u_1}(n_1, \tau_1)\widehat{u_2}(n_2, \tau_2) \right\|_{l^2_nl^1_{\tau}}.
$$

This is done by cases of $\langle n \rangle \lesssim \sigma $, where we bound by the $Y^{s-1, 0}$ norm or $\langle n \rangle \gtrsim \sigma$.

\textbf{Case 1:} $\langle n \rangle^{\frac{5}{4}} \lesssim \sigma.$ Then $$\frac{\langle n \rangle^s}{\sigma} \lesssim \langle n \rangle ^{s-\frac{5}{4}} \lesssim \langle n_1 \rangle^{s-\frac{5}{4}} \langle n_2 \rangle ^{s-\frac{5}{4}}$$ where $n_1 + n_2 =n$ which allows us to bound with the $Y^{s-1, 0}$ norm. Let $\mathcal{F}_x f_i = \langle n_i \rangle^{s- \frac{5}{4}} \mathcal{F}_x u_i$. Then using Parseval's, Holder's, and a Sobolev all in $x$, get
$$
\begin{aligned}
&\left\|\sum_{n_1+n_2 =n}\sum_{\tau_1+\tau_2 = \tau}\frac{\langle n \rangle^s}{\sigma} \widehat{u_1}(n_1, \tau_1)\widehat{u_2}(n_2, \tau_2) \right\|_{l^2_nl^1_{\tau}} \\&\lesssim \sum_{\tau_1, \tau_2}\|\sum_{n_1+n_2 =n}\langle n_1 \rangle ^{s-\frac{5}{4}} \widehat{u_1}(n_1, \tau_1) \langle n_2 \rangle ^{s-\frac{5}{4}}\widehat{u_2}(n_2, \tau_2) \|_{l^2_n} = \sum_{\tau_1, \tau_2}\|\mathcal{F}_x(f_1f_2) \|_{l^2_n}
\\& = \sum_{\tau_1, \tau_2}\|f_1f_2\|_{L^2_x}
\\& \lesssim \sum_{\tau_1, \tau_2}\|f_1\|_{L^4_x} \|f_2\|_{L^4_x} = \|f_1\|_{L^4_xl^1_{\tau_1}} \|f_2\|_{L^4_x l^1_{\tau_2}}
\\& \lesssim \|f_1\|_{H^{\frac{1}{4}}_xl^1_{\tau_1}}\|f_2\|_{H^{\frac{1}{4}}_xl^1_{\tau_2}}
\\& = \|\langle n_1 \rangle^{s-\frac{5}{4}} \langle n_1 \rangle^{\frac{1}{4}} \widehat{u_1} \|_{l^2_{n_1}l^1_{\tau_1}} \|\langle n_2 \rangle^{s-\frac{5}{4}} \langle n_2 \rangle^{\frac{1}{4}} \widehat{u_2} \|_{l^2_{n_2}l^1_{\tau_2}} = \|u_1\|_{Y^{s-1, 0}} \|u_2\|_{Y^{s-1, 0}}
\end{aligned}
$$
which is bounded by the product of the desired $Y^{s-1, 0}$ norms.

\textbf{Case 2:} $\langle n \rangle ^{\frac{5}{4}} \gtrsim \sigma.$ 
We treat this case like we did the $X^{s, b}$ norm. Consider now the subcase $\sigma = \max(\sigma, \sigma_1, \sigma_2), $ then by assumption, (\ref{eqbi2}), and Minkowski's
$$
\langle n \rangle^{\frac{5}{4}} \gtrsim \sigma 
\gtrsim \langle n \rangle \langle n_1 \rangle \langle n_2 \rangle \gtrsim \langle n \rangle^2,
$$
which means
$$
\sigma \lesssim \langle n \rangle^{\frac{5}{4}} \lesssim 1.
$$
This is then a trivial subcase, and without loss of generality only need to examine the subcase $\sigma_1 = \max(\sigma, \sigma_1, \sigma_2)$. Applying Lemma \ref{ysbxsbembed} and using the same process as case 2 above, one can control our norm by
$$
\left\|\sum_{n=n_1+n_2}\sum_{\tau = \tau_1+\tau_2}\frac{\widehat{f_1}(n_1, \tau_1) \widehat{f_2}(n_2, \tau_2)}{\sigma^{\frac{1}{2} + \delta}\sigma_2^{\frac{1}{2}}}\right\|_{l^2_{n,\tau}}.
$$

But this extra $\sigma^{\delta}$ term in the denominator does not affect the proof from case 2 above at all, since if we let $G_3 = f_3 \sigma^{-(\frac{1}{2} + \delta)}$ the corresponding term to be bounded is 
$$
\|G_3\|_{L^4_{x,t}} \lesssim \|G_3\|_{X^{0, \frac{1}{3}}} \lesssim \|f_3\|_{X^{0, -\frac{1}{6} + \delta}} \lesssim \|f_3\|_{X^{0, 0}}
$$
as required in the proof for case 2 above, completing our final subcase.
\end{proof}

The next  theorem helps to simplify the nonlinear term with no derivative:
\begin{theorem} \label{bilinear2}
$$\|u_1\mathbb{P}(u_2)\|_{Z^{s, \frac{-1}{2}}} \lesssim \|u_1\|_{Z^{s-1, \frac{1}{2}}}\|u_2\|_{Z^{s-1, \frac{1}{3}}} + \|u_1\|_{Z^{s-1, \frac{1}{3}}}\|u_2\|_{Z^{s-1, \frac{1}{2}}} + \|u_1\|_{Z^{s-1, \frac{1}{2}}} \|u_2\|_{X^{s, 0}}$$
and
$$\|u_1u_2\|_{Z^{s, \frac{-1}{2}}} \lesssim \|u_1\|_{Z^{s-1, \frac{1}{2}}}\|u_2\|_{Z^{s-1, \frac{1}{3}}} + \|u_1\|_{Z^{s-1, \frac{1}{3}}}\|u_2\|_{Z^{s-1, \frac{1}{2}}} + \|u_1\|_{Z^{s-1, \frac{1}{2}}} \|u_2\|_{X^{s, 0}} + \|u_1\|_{X^{s, 0}} \|u_2\|_{Z^{s-1, \frac{1}{2}}}.$$
\end{theorem}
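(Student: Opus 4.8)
The plan is to reduce both inequalities to Theorem \ref{bilinear} together with an elementary estimate on the spatial‑mean parts of $u_1,u_2$. Write $u_i=\mathbb{P}u_i+v_i$, where $v_i(t)=\int_{\mathbb{T}}u_i(x,t)\,dx$ depends on $t$ alone, so that $\widehat{v_i}(n,\tau)$ is supported at $n=0$. Expanding,
$$
u_1\mathbb{P}(u_2)=\mathbb{P}(u_1)\mathbb{P}(u_2)+v_1\mathbb{P}(u_2),\qquad u_1u_2=\mathbb{P}(u_1)\mathbb{P}(u_2)+v_1\mathbb{P}(u_2)+\mathbb{P}(u_1)v_2+v_1v_2.
$$
The term $\mathbb{P}(u_1)\mathbb{P}(u_2)$ is bounded directly by Theorem \ref{bilinear}, giving the first two summands on each right‑hand side, so it remains to estimate $v_1\mathbb{P}(u_2)$, $\mathbb{P}(u_1)v_2$ and $v_1v_2$; by symmetry it suffices to treat $v_1\mathbb{P}(u_2)$ and $v_1v_2$.

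For these ``mean'' terms, multiplication by $v_1$ is a convolution in $\tau$ that does not touch the spatial frequency: writing $g_1(\tau)=\widehat{u_1}(0,\tau)$ and $g_2^{(n)}(\tau)=\widehat{u_2}(n,\tau)$, one has $\widehat{v_1\mathbb{P}(u_2)}(n,\tau)=(g_1*g_2^{(n)})(\tau)$ for $n\neq 0$, and $\widehat{v_1v_2}$ is the analogous convolution supported at $n=0$. Here there is no dispersive gain to exploit — the quantity $P(n)-P(n_1)-P(n_2)=n_1n_2(3n-4)$ used in Theorem \ref{bilinear} degenerates when $n_1=0$ — but because $P(0)=0$ the factor $v_1$ carries the trivial modulation weight $\langle\tau\rangle^0=1$, and Young's inequality closes everything. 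Concretely, for any $b'\le 0$, using $\langle\tau-P(n)\rangle^{b'}\le 1$ and $\|g_1*g_2^{(n)}\|_{l^2_\tau}\le\|g_1\|_{l^1_\tau}\|g_2^{(n)}\|_{l^2_\tau}$,
$$
\|v_1\mathbb{P}(u_2)\|_{X^{s,b'}}\le\Big(\sum_{n}\langle n\rangle^{2s}\|g_1*g_2^{(n)}\|_{l^2_\tau}^2\Big)^{1/2}\le\|g_1\|_{l^1_\tau}\,\|u_2\|_{X^{s,0}},
$$
and restricting the definition of the $Y^{s-1,0}$ norm to the frequency $n=0$ (where $\langle 0\rangle^{s-1}=1$) gives $\|g_1\|_{l^1_\tau}\le\|u_1\|_{Y^{s-1,0}}\le\|u_1\|_{Z^{s-1,1/2}}$. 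The identical computation at $n=0$ bounds $v_1v_2$; interchanging the two factors yields the alternative bound $\|u_1\|_{X^{s,0}}\|u_2\|_{Z^{s-1,1/2}}$ needed for the second inequality.

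It remains to promote these $X^{s,\cdot}$ bounds to the full $Z^{s,-1/2}=X^{s,-1/2}\cap Y^{s,-1}$ norm. The $X^{s,-1/2}$ piece is the case $b'=-1/2$ above; for the $Y^{s,-1}$ piece I would invoke Lemma \ref{ysbxsbembed} with $b=-1/2$ to get $\|\,\cdot\,\|_{Y^{s,-1}}\lesssim_\delta\|\,\cdot\,\|_{X^{s,-1/2+\delta}}$ and apply the display with $b'=-1/2+\delta\le 0$. Adding the Theorem \ref{bilinear} contribution from $\mathbb{P}(u_1)\mathbb{P}(u_2)$ to the mean contributions then gives exactly the two claimed estimates. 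There is no serious obstacle: the argument is bookkeeping once one recognizes that (i) the mean parts admit no dispersive smoothing, so the only available gain is the $l^1_\tau$‑summability encoded in the $Y$‑norm of $u_1$ — which is precisely why $Z^{s-1,1/2}$ rather than $X^{s-1,1/2}$ appears on those terms — and (ii) the outer projection on the left is harmless, since the possibly non‑mean‑zero piece of the product, $\mathbb{P}(u_1)\mathbb{P}(u_2)$, is already covered by Theorem \ref{bilinear}. The only mild subtlety is the harmless $\delta$‑loss used to recover the $Y^{s,-1}$ component of the $Z$‑norm.
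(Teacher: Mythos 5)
Your proposal is correct and follows essentially the same route as the paper: both reduce to Theorem \ref{bilinear} after isolating the zero spatial frequency of a factor, handle the $Y^{s,-1}$ component via Lemma \ref{ysbxsbembed}, and control the mean contribution by the $l^1_\tau$-summability of $\widehat{u_1}(0,\cdot)$ coming from the $Y^{s-1,0}$ part of the $Z^{s-1,\frac{1}{2}}$ norm. The only (cosmetic) difference is that you estimate the $n_1=0$ piece directly on the Fourier side with Young's inequality $l^1_\tau * l^2_\tau \to l^2_\tau$, whereas the paper passes through physical space using Parseval, H\"older in $t$, and the embedding of Lemma \ref{3.9embed}.
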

\begin{remark}
It turns out we need the $\|u\|_{X^{s,0}}$ term because of the case where $n_1 =0$, $\tau - P(n) = o(1)$, $\tau_2 - P(n) = \tau_2 - P(n_2) = o(1)$ and $\tau = o(1)$. Then if the terms of $u_2$ are concentrated at $\widehat{u_2}(n_2, \tau_2)$ with $\tau_2 - P(n_2) \ll n_2$ the $\langle n_2 \rangle^s$ cannot be cancelled by the $\sigma^\frac{-1}{2}$. This term will pose a significant obstacle to proving local well-posedness in the next section.
\end{remark}

\begin{proof}

Use the same notation as Theorem \ref{bilinear}. Also let $\widehat{g_2} = \langle n_2 \rangle^s \widehat{u_2}$ and $\widehat{h_1} = \langle n_1 \rangle ^{s-1} \widehat{u_1}$.
By Lemma \ref{ysbxsbembed} $$LHS \lesssim \|\langle n \rangle^s \sigma ^{\frac{-1}{2} + \delta} \widehat{u_1u_2} \|_{l^2_{n,\tau}}.
$$
By the trivial embedding and Theorem \ref{bilinear} then we only need to consider the case $n_1 =0$. Denote this by $I$.

In this case use the facts that $1 \lesssim \langle \sigma \rangle $, $\langle n_1 \rangle = 1$ and $\langle n \rangle = \langle n_2 \rangle $  to get 
$$
I = \left\| \sum_{\tau_1 + \tau_2 = \tau}{\widehat{h_1}(0, \tau_1) \widehat{g_2}(n, \tau_2)}\right\|_{l^2_{n, \tau}}.
$$
Using Parseval's equality with respect to time, then pulling the $\widehat{h_1}$ out of the $l^2_n$ norm, and then again using Parseval's equality with respect to space on $g_2$ we get
$$
I = \left\|\mathcal{F}_x h_1(0, t)  \|g_2 \|_{L^2_x} \right\|_{L^2_t}.
$$
Then by H\"{o}lder's and using Parseval's equality again on $g_2$,
$$
I \lesssim \|\mathcal{F}_x h_1(0, t)\|_{L_t^{\infty}} \|g_2\|_{L^2_{x,t}} = \|\mathcal{F}_x h_1(0, t)\|_{L_t^{\infty}} \|u_2\|_{X^{s, 0}}.
$$
Then use the explicit formula for $\widehat{h_1}(0)$, trivial embedding on the torus and Lemma \ref{3.9embed}
$$
\begin{aligned}
I &= \left\|\int_{\mathbb{T}}{h_1(x,t)dx} \right\|_{L^{\infty}_t} \|u_2\|_{X^{s, 0}}
\\ &\lesssim \|h_1\|_{L^1_xL^{\infty}_t}\|u_2\|_{X^{s, 0}}
\\ &\lesssim \|h_1\|_{L^2_xL^{\infty}_t}\|u_2\|_{X^{s, 0}}
\\& = \|u_1\|_{H^{s-1}_xL^{\infty}_t}\|u_2\|_{X^{s, 0}}
\\& \lesssim \|u_1\|_{Z^{s-1, \frac{1}{2}}} \|u_2\|_{X^{s, 0}},
\end{aligned}
$$
which when combined with the results of Theorem \ref{bilinear} yield the result.
\end{proof}

\begin{theorem} \label{Xs0bilinear}
For $s\geq 0$
$$\|u_1u_2\|_{X^{s, 0}} \lesssim \|u_1\|_{X^{s, \frac{1}{3}}}\|u_2\|_{X^{s, \frac{1}{3}}}.$$
\end{theorem}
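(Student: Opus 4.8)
The plan is to reduce this bilinear $X^{s,0}$ bound to an application of Plancherel's theorem, H\"older's inequality, and the $L^4$ Strichartz estimate of Theorem \ref{L4Ysb}, exactly in the spirit of Case 1 in the proof of Theorem \ref{bilinear}. Writing the $X^{s,0}$ norm on the Fourier side,
$$
\|u_1u_2\|_{X^{s,0}} = \left\| \langle n\rangle^s \sum_{n_1+n_2=n}\sum_{\tau_1+\tau_2=\tau} \widehat{u_1}(n_1,\tau_1)\widehat{u_2}(n_2,\tau_2)\right\|_{l^2_{n,\tau}},
$$
the first step is to absorb the weight $\langle n\rangle^s$ into the two factors using $\langle n_1+n_2\rangle \lesssim \langle n_1\rangle\langle n_2\rangle$, which is legitimate precisely because $s\geq 0$ (the same elementary inequality already used in the proof of Theorem \ref{bilinear}).

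Concretely, I would set $\widehat{f_i}(n_i,\tau_i) = \langle n_i\rangle^s \widehat{u_i}(n_i,\tau_i)$, so that $\|f_i\|_{X^{0,b}} = \|u_i\|_{X^{s,b}}$ for every $b$. After the weight splitting, Minkowski's inequality gives
$$
\|u_1u_2\|_{X^{s,0}} \lesssim \left\| \sum_{n_1+n_2=n}\sum_{\tau_1+\tau_2=\tau}\widehat{f_1}(n_1,\tau_1)\widehat{f_2}(n_2,\tau_2)\right\|_{l^2_{n,\tau}} = \|\widehat{f_1} *_{n,\tau}\widehat{f_2}\|_{l^2_{n,\tau}} = \|f_1f_2\|_{L^2_{x,t}},
$$
the last equality by Plancherel's theorem on $\mathbb{T}^2$.

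The final step is H\"older followed by the $L^4$ Strichartz estimate: $\|f_1f_2\|_{L^2_{x,t}} \leq \|f_1\|_{L^4_{x,t}}\|f_2\|_{L^4_{x,t}} \lesssim \|f_1\|_{X^{0,\frac{1}{3}}}\|f_2\|_{X^{0,\frac{1}{3}}}$ by Theorem \ref{L4Ysb}, and unwinding the definition of $f_i$ turns the right-hand side into $\|u_1\|_{X^{s,\frac{1}{3}}}\|u_2\|_{X^{s,\frac{1}{3}}}$, which is exactly the claim. There is no real obstacle here; the only points requiring any care are the bookkeeping of the weights (the bound $\langle n\rangle^s \lesssim \langle n_1\rangle^s\langle n_2\rangle^s$ needs $s\ge 0$, and the identification $\|f_i\|_{X^{0,\frac{1}{3}}} = \|u_i\|_{X^{s,\frac{1}{3}}}$), and the observation that, unlike in Theorem \ref{bilinear}, no projection onto mean-zero functions and no case distinction on which of $\sigma,\sigma_1,\sigma_2$ dominates is needed, since there is no negative power of $\sigma$ to be absorbed on the left-hand side.
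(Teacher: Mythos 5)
Your proposal is correct and takes essentially the same route as the paper's proof: absorb the weight via $\langle n\rangle^s \lesssim \langle n_1\rangle^s\langle n_2\rangle^s$ (valid since $s\geq 0$), set $\widehat{f_i}=\langle n_i\rangle^s\widehat{u_i}$, and conclude by Plancherel, H\"older, and the $L^4$ Strichartz estimate of Theorem \ref{L4Ysb}. No gaps.
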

\begin{proof}
Let $\widehat{f_i}$ denote $\langle n_i \rangle ^s \widehat{u_i}$ throughout this proof. We first expand and then plug in $\langle n \rangle \lesssim \langle n_1 \rangle \langle n_2 \rangle$ where $n_1 + n_2=n$ by Minkowski's, and apply the $L^4$ Strichartz inequality \ref{L4Ysb}:
$$\begin{aligned}
\|u_1u_2\|_{X^{s, 0}} 
& = \left\|\langle n \rangle ^s \sum_{n_1 +n_2 = n} \sum_{\tau_1 + \tau_2 = \tau}\widehat{u_1}\widehat{u_2}\right\|_{l^2_{n, \tau}}
\\
& \lesssim \left\|\sum_{n_1 +n_2 = n} \sum_{\tau_1 + \tau_2 = \tau}\langle n_1 \rangle^s \widehat{u_1} \langle n_2 \rangle ^s \widehat{u_2}\right\|_{l^2_{n, \tau}}
\\& = \left\|\langle n \rangle ^s \sum_{n_1 +n_2 = n} \sum_{\tau_1 + \tau_2 = \tau}\widehat{f_1}\widehat{f_2}\right\|_{l^2_{n, \tau}}
\\
& = \|f_1f_2\|_{L^2_{x,t}}\\
& \lesssim \|f_1\|_{L^4_{x,t}} \|f_2\|_{L^4_{x,t}}
\\& \lesssim \|f_1\|_{X^{0,\frac{1}{3}}} \|f_2\|_{X^{0,\frac{1}{3}}}
\\& = \|u_1\|_{X^{s, \frac{1}{3}}}\|u_2\|_{X^{s, \frac{1}{3}}}.
\end{aligned}$$
\end{proof}

\begin{theorem}\label{CKSTTbilinear}
For $s\geq \frac{1}{2}$
$$
B=\|u_1u_2\|_{Z^{s-1, \frac{1}{2}}} \lesssim \|u_1\|_{Z^{s, \frac{1}{2} }}\|u_2\|_{Z^{s, \frac{1}{2}}},
$$
where labelled the left hand side $B$ for ease of notation.
\end{theorem}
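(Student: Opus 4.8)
The plan is to use $Z^{s-1,\frac12}=X^{s-1,\frac12}\cap Y^{s-1,0}$ and bound $\|u_1u_2\|_{X^{s-1,\frac12}}$ and $\|u_1u_2\|_{Y^{s-1,0}}$ separately, drawing on both the $X^{s,\frac12}$ and the $Y^{s,0}$ content of the right-hand norms. For the principal term I pass to duality, $\|u_1u_2\|_{X^{s-1,\frac12}}=\sup\langle u_1u_2,g\rangle$ over $\|g\|_{X^{1-s,-\frac12}}\le1$, set $\widehat{u_i}=\widehat{f_i}\langle n_i\rangle^{-s}\sigma_i^{-1/2}$ and $\widehat{g}=\widehat{f_3}\langle n\rangle^{s-1}\sigma^{1/2}$ (so $\|\widehat{f_3}\|_{\ell^2}\le1$ and $\|\widehat{f_i}\|_{\ell^2}=\|u_i\|_{X^{s,\frac12}}$), where $\sigma=\langle\tau-P(n)\rangle$, $\sigma_i=\langle\tau_i-P(n_i)\rangle$, reducing matters to a trilinear sum over $n=n_1+n_2,\ \tau=\tau_1+\tau_2$ with multiplier
$$
m=\frac{\langle n\rangle^{s-1}\sigma^{1/2}}{\langle n_1\rangle^s\langle n_2\rangle^s\,\sigma_1^{1/2}\sigma_2^{1/2}}.
$$
The algebraic input, already used in Theorem \ref{bilinear}, is $P(n)-P(n_1)-P(n_2)=n_1n_2(3n-4)$; together with $|3n-4|\sim\langle n\rangle$ (legitimate because $3n-4$ is a nonzero integer) and $|n_i|\sim\langle n_i\rangle$ for $n_i\ne0$, inserting this into $\tau-P(n)=(\tau_1-P(n_1))+(\tau_2-P(n_2))-n_1n_2(3n-4)$ yields $\sigma\le\sigma_1+\sigma_2+C\langle n\rangle\langle n_1\rangle\langle n_2\rangle$ when $n_1n_2\ne0$, and $\sigma\le\sigma_1+\sigma_2$ when $n_1n_2=0$.

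I then split the sum by the size of $\sigma$. If $n_1n_2\ne0$ and $\sigma\lesssim\langle n\rangle\langle n_1\rangle\langle n_2\rangle$, I bound $\sigma^{1/2}\lesssim(\langle n\rangle\langle n_1\rangle\langle n_2\rangle)^{1/2}$ and, using $s\ge\tfrac12$ to absorb the resulting $\langle n\rangle^{s-1/2}$ into $\langle n_1\rangle^{s-1/2}\langle n_2\rangle^{s-1/2}$, obtain $m\lesssim\sigma_1^{-1/2}\sigma_2^{-1/2}$; the sum is then $\lesssim\|F_1F_2\|_{L^2_{x,t}}\|f_3\|_{L^2_{x,t}}\le\|F_1\|_{L^4}\|F_2\|_{L^4}\|f_3\|_{L^2}$ with $\widehat{F_i}=\widehat{f_i}\sigma_i^{-1/2}$, and Theorem \ref{L4Ysb} gives $\|F_i\|_{L^4}\lesssim\|F_i\|_{X^{0,1/3}}\lesssim\|\widehat{f_i}\|_{\ell^2}=\|u_i\|_{X^{s,1/2}}$. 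Otherwise — $n_1n_2=0$, or $n_1n_2\ne0$ with $\sigma\gtrsim\langle n\rangle\langle n_1\rangle\langle n_2\rangle$, which by the above forces $\sigma\lesssim\sigma_1+\sigma_2\lesssim\max(\sigma_1,\sigma_2)$ — I estimate $\|u_1u_2\|_{X^{s-1,\frac12}}$ directly rather than by duality: taking (WLOG) $\sigma\lesssim\sigma_1$ and bounding $\sigma^{1/2}\lesssim\sigma_1^{1/2}$ inside $\widehat{u_1u_2}$, the $\sigma_1^{1/2}$ is absorbed into $u_1$, and (after distributing the harmless weight $\langle n\rangle^{s-1}$ if $s>1$) the estimate reduces to $\|V_1G_2\|_{L^2_{x,t}}$ with $\widehat{V_1}=\langle n_1\rangle^{\max\{s-1,0\}}\sigma_1^{1/2}|\widehat{u_1}|$, $\widehat{G_2}=\langle n_2\rangle^{\max\{s-1,0\}}|\widehat{u_2}|$; here $\|V_1\|_{L^2}\le\|u_1\|_{X^{s,1/2}}$, and for $s>\tfrac12$ the chain $Y^{s,0}\subset L^\infty_tH^s_x\subset L^\infty_{x,t}$ (Lemma \ref{3.9embed} and $H^s(\mathbb T)\hookrightarrow L^\infty(\mathbb T)$) gives $\|G_2\|_{L^\infty_{x,t}}\lesssim\|u_2\|_{Y^{s,0}}$, so $\|V_1G_2\|_{L^2}\lesssim\|u_1\|_{X^{s,1/2}}\|u_2\|_{Y^{s,0}}$. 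When $n_1=0$ the factor "$u_1$" is just $w(t)=\int_{\mathbb T}u_1\,dx$ and the same manipulation, finished with Young's inequality in $\tau$ ($\ell^2*\ell^1\subset\ell^2$ or $\ell^1*\ell^2\subset\ell^2$ according to which of $\sigma_1,\sigma_2$ was used), produces $\|\langle\tau\rangle^{1/2}\widehat w\|_{\ell^2}\|u_2\|_{Y^{s-1,0}}+\|\widehat w\|_{\ell^1}\|u_2\|_{X^{s-1,1/2}}\le\|u_1\|_{X^{s,1/2}}\|u_2\|_{Y^{s,0}}+\|u_1\|_{Y^{s,0}}\|u_2\|_{X^{s,1/2}}$, as in the remark after Theorem \ref{bilinear2}.

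For $\|u_1u_2\|_{Y^{s-1,0}}$ I would use that $Y^{s,0}\hookrightarrow\mathcal F\ell^1$ (i.e.\ $\widehat{u_i}\in\ell^1_{n,\tau}$) for $s>\tfrac12$, since then $\langle n\rangle^{-s}\in\ell^2_n$; $\mathcal F\ell^1$ is an algebra, and $\mathcal F\ell^1\hookrightarrow Y^{s-1,0}$ when $s\le1$, while for $s>1$ one distributes $\langle n\rangle^{s-1}\lesssim\langle n_1\rangle^{s-1}\langle n_2\rangle^{s-1}$ over the two factors and closes with Young's inequalities in $n$ and $\tau$. This completes the bound for $s>\tfrac12$; at the endpoint $s=\tfrac12$ (where $H^{1/2}(\mathbb T)\not\hookrightarrow L^\infty(\mathbb T)$ and $\langle n\rangle^{-1/2}\notin\ell^2_n$) the last steps would require a logarithmic-type refinement, or replacing the $L^\infty$ pairing by an $L^4\!-\!L^4$ one after a finer modulation decomposition.

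The step I expect to be the crux is the high-output-modulation part of the $X^{s-1,\frac12}$ estimate. In Theorem \ref{bilinear} the output modulation appears in the \emph{denominator} of the multiplier, so its largeness is favourable and the whole argument runs on the $L^4$-Strichartz inequality; here $\sigma^{1/2}$ sits in the \emph{numerator}, and the only route I see is to trade it against a large input modulation and then spend the one-derivative surplus ($\langle n\rangle^{s-1}$ versus $\langle n_1\rangle^s\langle n_2\rangle^s$) through the $L^\infty$-Sobolev embedding provided by the $Y$-component of $Z^{s,\frac12}$ — which is exactly why the right-hand side is phrased with $Z$-norms rather than $X$-norms.
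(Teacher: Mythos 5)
Your core strategy coincides with the paper's: split according to whether the output modulation $\sigma=\langle\tau-P(n)\rangle$ is controlled by an input modulation or by the resonance factor coming from $P(n)-P(n_1)-P(n_2)=n_1n_2(3n-4)$; in the resonant regime trade $\sigma^{1/2}\lesssim(\langle n\rangle\langle n_1\rangle\langle n_2\rangle)^{1/2}$ against the derivative surplus (using $s\ge\tfrac12$) and close with the $L^4$ Strichartz estimate of Theorem \ref{L4Ysb}; in the nonresonant regime absorb $\sigma^{1/2}\lesssim\sigma_1^{1/2}$ into $u_1$ and pay for $u_2$ with the $Y$-component of the norm. Whether you run the resonant case by duality or directly is immaterial. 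The genuine differences are in the finishing tools: for the nonresonant case you use $L^2_{x,t}\times L^\infty_{x,t}$ with $Y^{s,0}\subset L^\infty_tH^s_x\hookrightarrow L^\infty_{x,t}$, and for the $\|u_1u_2\|_{Y^{s-1,0}}$ component you use the Wiener-algebra embedding $Y^{s,0}\hookrightarrow\mathcal F\ell^1$. Both of these require $s>\tfrac12$ strictly, and you correctly flag this.

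That endpoint is not dispensable: the theorem is stated for $s\ge\tfrac12$, and the only place it is used (the contraction attempt in Section \ref{5}, via Lemma \ref{trilinear1}) is exactly in $Z^{\frac12,\frac12}$. So as written your argument has a gap precisely at the case that matters. The paper closes it with subcritical substitutes for your two $L^\infty$-type steps. In the nonresonant case it pairs $F_1u_2$ as $L^2_tL^{2+}_x\times L^\infty_tL^{2+}_x$ and controls each factor by $H^{\epsilon}_x$-based norms, arriving at $\|u_1\|_{X^{\epsilon,\frac12}}\|u_2\|_{Y^{\epsilon,0}}$, which is dominated by the $Z^{s,\frac12}$ norms for any $s\ge\epsilon$; no $H^{1/2}\hookrightarrow L^\infty$ is ever invoked. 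For the $Y^{s-1,0}$ component it sets $\mathcal F_xh_i(n_i)=\sum_{\tau_i}\langle n_i\rangle^{s-1}|\widehat{u_i}(n_i,\tau_i)|$ and uses $\|h_1h_2\|_{L^2_x}\le\|h_1\|_{L^4_x}\|h_2\|_{L^4_x}\lesssim\|h_1\|_{H^{1/4}_x}\|h_2\|_{H^{1/4}_x}$, which costs only a quarter derivative and yields $\|u_1\|_{Y^{s-\frac34,0}}\|u_2\|_{Y^{s-\frac34,0}}\le\|u_1\|_{Y^{s,0}}\|u_2\|_{Y^{s,0}}$ uniformly down to $s=\tfrac12$. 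Replacing your two endpoint-failing embeddings by these (or by the $L^4$--$L^4$ pairing you yourself suggest as a fallback) turns your proposal into a complete proof; no logarithmic refinement is needed.
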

\begin{proof}
We expand into 
$$
B= \left\|\sum_{n=n_1+n_2} \langle n \rangle ^{s-1} \sigma^\frac{1}{2} \widehat{u_1}(n_1, \tau_1)\widehat{u_2}\right\|_{l^2_{n, \tau}}.
$$
Notice $\tau - P(n) = \tau_1 - P(n_1) + \tau_2 - P(n_2) - n_1n_2(3n-4).$
Then divide into two cases. 

\textbf{Case 1:} $|\tau - P(n)| \lesssim |\tau_1 - P(n_1)|$.\\ Without loss of generality this also encompasses if $|\tau-P(n)|$ less or similar to $\sigma_2$. Then $\sigma \lesssim \sigma_1$ means
$$
B \lesssim \left\|\sum_{n=n_1+n_2} \langle n \rangle ^{s-1} \sigma_1^\frac{1}{2} \widehat{u_1}(n_1, \tau_1)\widehat{u_2}\right\|_{l^2_{n, \tau}}.
$$
Let $F_1 = \sigma_1^\frac{1}{2}\widehat{u_1}$
It remains to bound
$$
\|F_1u_2\|_{X^{s-1, 0}} = \|F_1u_2\|_{H^{s-1}_xL^2_t} \lesssim \left\|\|F_1\|_{L^2_{t}}\|u_2\|_{L^{\infty}_t}\right\|_{H^{s-1}_x}
$$
which by linear estimates on pg.11 of \cite{CKSTT} gives
$$
B \lesssim \|F_1\|_{L^2_tL^{2+}_x} \|u_2\|_{L^{\infty}_tL^{2+}_x}
\lesssim \|F_1\|_{H^{\epsilon}_xL^{2}_t}\|u_2\|_{L^{\infty}_tH^{\epsilon}_x}.
$$
Then applying Lemma \ref{3.9embed}
$$
B \lesssim \|u_1\|_{X^{\epsilon, \frac{1}{2}}}\|u_2\|_{Y^{\epsilon, 0}}.
$$

\textbf{Case 2:} $|\tau-P(n)| \gg |\tau_i - P(n_i)|$ for $i=1, 2$.

Then here must have $|\tau-P(n)| \sim |n_1n_2(3n-4)|$. 
But notice since $n_1, n_2, 3n-4, \tau - P(n)$ cannot be zero so we have 
$$\begin{aligned}
\langle \tau-P(n) \rangle &\sim |n_1n_2(3n-4)|
\\&\sim \langle n_1\rangle \langle n_2 \rangle |n| 
\\&\leq  \langle n_1\rangle \langle n_2 \rangle (|n_1| + |n_2|)
\lesssim \langle n_1\rangle \langle n_2 \rangle (\langle n \rangle ).
\end{aligned} 
$$
This gives
$$
B \lesssim \left\|\sum_{n=n_1+n_2} \langle n \rangle ^{s-\frac{1}{2}} \langle n_1 \rangle ^{\frac{1}{2}}\langle n_2 \rangle ^{\frac{1}{2}}  \widehat{u_1}(n_1, \tau_1)\widehat{u_2}\right\|_{l^2_{n, \tau}}.
$$
But $n=n_1+n_2 \implies \langle n\rangle \lesssim  \langle n_1 \rangle + \langle n_2 \rangle \lesssim \langle n_1 \rangle \langle n_2 \rangle$
since each term must be greater than $1$. Also $s-\frac{1}{2}>0,$ so one can plug this into $B$ to get

$$
B \lesssim \left\|\sum_{n=n_1+n_2} \langle n_1 \rangle^s \widehat{u_1}(n_1, \tau_1)\langle n_2 \rangle ^s\widehat{u_2}\right\|_{l^2_{n, \tau}}.
$$
Let $\widehat{f_i} = \langle n_1 \rangle ^s \widehat{u_i}$. Then applying Parseval's equality, H\"{o}lder's inequality and the $L^4$ Strichartz estimate \ref{L4Ysb}, obtain
$$
\begin{aligned}
B &\lesssim \left\|\sum_{n=n_1+n_2} \widehat{f_1}(n_1, \tau_1)\widehat{f_2}\right\|_{l^2_{n, \tau}}
\\&\lesssim  \|f_1f_2\|_{L^2_{x, t}} \lesssim \|f_1\|_{L^4_{x, t}} \|f_2\|_{L^4_{x, t}}
\\& \lesssim \|u_1\|_{X^{s, \frac{1}{3}}}\|u_2\|_{X^{s, \frac{1}{3}}},
\end{aligned}
$$
finishing this case.

In summary, 
$$
B \lesssim \|u_1\|_{X^{\epsilon, \frac{1}{2}}}\|u_2\|_{X^{\epsilon, \frac{1}{2}}} + \|u_1\|_{X^{s, \frac{1}{3}}}\|u_2\|_{X^{s, \frac{1}{3}}}
$$
which gives the result upon applying the trivial embedding in Bourgain spaces.\\

Next bound the $Y^{s-1, 0}$ norm. Notice one can separate out the $\tau_1, \tau_2$ terms
$$
\|u_1u_2\|_{Y^{s-1, 0}} 
= \left\|\sum_{\tau_1 + \tau_2 = \tau}\sum_{n_1 +n_2 = n} \langle n \rangle^{s-1}\widehat{u_1}(n_1, \tau_1)\widehat{u_2}(n_2, \tau_2)\right\|_{l^2_nl^1_\tau} = \sum_{\tau_1}\sum_{\tau_2}\|\langle n \rangle^{s-1} \widehat{u_1}\widehat{u_2}\|_{l^2_n} 
$$
Now let $\mathcal{F}_xh_i (n_i) = \sum_{\tau_i}{\langle n_i \rangle ^{s-1}\widehat{u_i}(n_i, \tau_i)}$ By successive applications of Minkowski's, Parseval's, and Cauchy Schwarz bound the prior term:
$$
\begin{aligned}
&\lesssim \left\|\sum_{n_1 +n_2 = n} \sum_{\tau_1}\langle n_1 \rangle^{s-1} \widehat{u_1}\sum_{\tau_2}\langle n_2 \rangle^{s-1}\widehat{u_2}\right\|_{l^2_n}\\
& = \|\mathcal{F}_x(h_1h_2)\|_{l^2_n}\\
& = \|h_1h_2\|_{L^2_x} \lesssim \|h_1\|_{L^4_x} \|h_2\|_{L^4_x}
\end{aligned}
$$
By the Sobolev inequality one can bound the prior term by
$$
\|h_1\|_{H^{\frac{1}{4}}_x}\|h_2\|_{H^{\frac{1}{4}}_x} = \sum_{\tau_1}\|\langle n_1 \rangle^{\frac{1}{4}} \langle n_1 \rangle ^{s-1}\widehat{u_1}\|_{l^2_{n_1}}\sum_{\tau_2}\|\langle n_2 \rangle^{\frac{1}{4}} \langle n_2 \rangle ^{s-1}\widehat{u_2}\|_{l^2_{n_2}} = \|u_1\|_{Y^{s- \frac{3}{4}, 0}}\|u_2\|_{Y^{s- \frac{3}{4}, 0}}
$$
which by trivial embeddings gives the desired result.
\end{proof}

Now show a sample trilinear estimate which can be used to control a nonlinear term in the Dysthe equation:
\begin{lemma}\label{trilinear1}
Let $T<1$. Then
$$
\left\|\eta\left(\frac{t}{T}\right)|u|^2u\right\|_{Z^{s, \frac{-1}{2}}} \lesssim T^{\frac{1}{6}}\|u\|^3_{Z^{s, \frac{1}{2}}}.$$
\end{lemma}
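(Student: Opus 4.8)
The plan is to reduce this trilinear estimate to the bilinear estimates already established, extracting the factor $T^{1/6}$ from the time-localisation estimate, Proposition~\ref{tle}. First I would write $|u|^2u = u\cdot(u\,u^\ast)$ and dispatch the complex conjugation: each of Theorems~\ref{bilinear2}, \ref{CKSTTbilinear}, \ref{Xs0bilinear} has a ``conjugated'' variant, in which one of the two input functions appears conjugated and the right-hand side is left unchanged, proved by the identical argument --- after relabelling the conjugated frequency by its negative, that factor carries the standard weight $\langle \tau - P(n)\rangle$, the resonance identity becomes another factorisation into a product of three frequency-comparable integers (e.g. $P(n_1) - P(n_2) - P(n_1 - n_2) = n\,n_2(3n_1-4)$), and the $L^4$ and $L^6$ Strichartz inequalities are invariant under conjugation. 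With this in hand I would apply the second inequality of Theorem~\ref{bilinear2} to the outer product ($u_1 = u$, $u_2 = u\,u^\ast$), bound $\|u\,u^\ast\|_{Z^{s-1,1/2}}$ by (the conjugated) Theorem~\ref{CKSTTbilinear}, $\|u\,u^\ast\|_{Z^{s-1,1/3}}$ by the same via the embedding $Z^{s-1,1/2}\hookrightarrow Z^{s-1,1/3}$, and $\|u\,u^\ast\|_{X^{s,0}}$ by Theorem~\ref{Xs0bilinear}; together with the trivial embeddings $Z^{s,1/2}\hookrightarrow Z^{s-1,1/2}\hookrightarrow Z^{s-1,1/3}$ and $Z^{s,1/2}\hookrightarrow X^{s,0}$ this already yields the cutoff-free bound $\||u|^2u\|_{Z^{s,-1/2}}\lesssim\|u\|_{Z^{s,1/2}}^3$ (for $s\ge \tfrac12$, as inherited from the bilinear estimates).

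To produce the $T^{1/6}$ I would run the same argument with the cutoff distributed among the three factors: writing $\eta(t/T)|u|^2u = \bigl(\eta(t/T)u\bigr)\bigl(\chi(t/T)u\bigr)\bigl(\chi(t/T)u^\ast\bigr)$, where $\chi$ is a bump function equal to $1$ on $\operatorname{supp}\eta$. The decisive point is that the Strichartz-driven bilinear estimates never genuinely demand all their inputs at modulation exponent $\tfrac12$: inspecting the proofs of Theorems~\ref{bilinear}, \ref{CKSTTbilinear} and \ref{Xs0bilinear}, the ``high-high'' cases place two factors at modulation exponent $\tfrac13$, and the $X^{s,0}$-slots in Theorem~\ref{bilinear2} ask only for modulation $0$. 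Whenever a cutoff factor enters at such a sub-$\tfrac12$ exponent $b'$, I would invoke Proposition~\ref{tle} to replace it, at the cost of $T^{\,b-b'}$, by the same factor without cutoff at exponent $b\in(b',\tfrac12)$, then bound trivially $\|u\|_{X^{\sigma,b}}\le\|u\|_{X^{\sigma,1/2}}\le\|u\|_{Z^{s,1/2}}$ (for $\sigma\le s$); accumulating these factors over the available slots, and letting $b\uparrow\tfrac12$, supplies the power $T^{1/6}$ (an arbitrarily small loss incurred near the endpoint $b=\tfrac12$ is absorbed either into the implicit constant or by arranging two such slots, so that the combined exponent clears $\tfrac16$). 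The $Y$-components of the $Z$-norms, and the endpoints $b=\pm\tfrac12$ where Proposition~\ref{tle} is unavailable, are handled by first converting a $Y$-norm to an $X$-norm with slightly larger modulation via Lemma~\ref{ysbxsbembed}, after which the time-localisation estimate applies; at the genuine endpoint the cutoff is merely bounded uniformly in $T\le 1$.

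The main obstacle is the bookkeeping of exactly where the powers of $T$ originate. One must check that in every branch of the case analysis generated by composing the bilinear estimates --- in particular the branch involving the term $\|u_1\|_{Z^{s-1,1/2}}\|u_2\|_{X^{s,0}}$ of Theorem~\ref{bilinear2}, the one singled out as the obstruction to well-posedness, and the ``maximal-modulation'' branches of Theorem~\ref{CKSTTbilinear} whose only spare slot is a conjugation-invariant $Y^{\epsilon,0}$-factor carrying no gain --- there remain enough sub-$\tfrac12$-modulation slots, situated on cutoff factors, for the $T$-exponents to sum to $\tfrac16$; it is precisely here that one may need to choose the grouping $|u|^2u = (u\,u^\ast)\cdot u$ rather than $u\cdot(u\,u^\ast)$ so that the inner product lands in the more permissive $Z^{s-1,1/3}$-slot. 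Delicately tracking this distribution of the cutoff is the only real work; the conjugation, the number-theoretic counting behind the Strichartz inequalities, and the Sobolev and Bourgain-space embeddings are all routine.
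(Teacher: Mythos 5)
Your proposal follows essentially the same route as the paper: reduce $|u|^2u$ to a composition of the bilinear estimates (Theorems \ref{bilinear2}, \ref{Xs0bilinear}, \ref{CKSTTbilinear}) and harvest the factor $T^{\frac{1}{6}}$ from the time-localisation estimate applied to the slots whose modulation exponent sits below $\tfrac12$ (namely $b=\tfrac13$ and $b=0$). The only difference is the grouping --- the paper writes $|u|^2u = u^2\cdot u^*$ so that the conjugate occupies a single factor slot rather than $u\cdot(uu^*)$ --- which is a cosmetic variation of the same argument.
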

\begin{proof}
Denote the left-hand-side by $I$. Notice that the $\eta$ does not affect any manipulations until apply the time localisation estimate, so leave it implied throughout this proof.

Split $|u|^2u = u^2 \cdot u^*.$ Then by Theorem \ref{bilinear2} write 
$$
I \lesssim \|u^2\|_{X^{s-1, \frac{1}{2}}}\|u^*\|_{X^{s-1, \frac{1}{3}}} + \|u^2\|_{X^{s-1, \frac{1}{3}}}\|u^*\|_{X^{s-1, \frac{1}{2}}} + \|u^2\|_{X^{s-1, \frac{1}{2}}} \|u^*\|_{X^{s, 0}} + \|u^2\|_{X^{s, 0}}\|u^*\|_{X^{s-1, \frac{1}{2}}}
$$
Now notice $\|u^*\| = \|u\|$. Then apply Theorem \ref{Xs0bilinear} to the last term and then time localisation estimate to every term with $b= \frac{1}{3}$ or $b=0$ and use the fact that $T<1$ to drop the powers of $T$ higher than $\frac{1}{6}$ and get
$$
I \lesssim T^{\frac{1}{6}}\left(\|u^2\|_{X^{s-1, \frac{1}{2}}}\|u\|_{X^{s-1, \frac{1}{2}}} + \|u^2\|_{X^{s-1, \frac{1}{2}}}\|u\|_{X^{s-1, \frac{1}{2}}} + \|u^2\|_{X^{s-1, \frac{1}{2}}} \|u\|_{X^{s, \frac{1}{2}}} + \|u\|_{X^{s, \frac{1}{2}}}^2\|u\|_{X^{s-1, \frac{1}{2}}}\right)
$$
Finally use the trivial embedding $X^{s, \frac{1}{2}} \subset X^{s-1, \frac{1}{2}}$ and Theorem \ref{CKSTTbilinear} on the first three terms to get

$$
I \lesssim T^{\frac{1}{6}}\|u\|_{X^{s, \frac{1}{2}}}^3
$$
as claimed.
\end{proof}

\section{Obstacles to Well-posedness problem} \label{5}
\subsection{Failure of the Harmonic Analysis Approach}
We try a contraction mapping argument in the $Z^{\frac{1}{2}, \frac{1}{2}}$ space.  By the Duhamel representation (\ref{duhamelrep}) one can define the solution map as
$$
\Gamma_{u_0}(u) = \eta(t) e^{it\mathcal{L}}u_0 + i \eta\left(\frac{t}{T}\right)\int_{0}^{t}{e^{i(t-t')\mathcal{L}}\mathcal{N}(u(t'))dt'}
$$
One difficulty in proving the well-posedness of the Dysthe equation is that the mean is not conserved. Thus take out a variable mean, $a(t),$ but this causes issues later down the line.

Let $a(t)=\int_{\mathbb{T}}{udx}$ and $v=u-a$. We compute $$
a'(t) = \int_{\mathbb{T}}{\frac{i}{2}|u|^2u + \frac{3}{2}|u|^2 \partial_x u + \frac{1}{4}u^2\partial_x{u^*} - \frac{1}{2}iu|\partial_x||u|^2 dx}.
$$
Then our new Duhamel map becomes 
$$
\Gamma_{v_0}(v) = \eta\left(\frac{t}{T}\right) e^{it\mathcal{L}}v_0 + i \eta\left(\frac{t}{T}\right)\int_{0}^{t}{e^{i(t-t')\mathcal{L}}(\mathcal{N}(u(x, t')) + a'(t'))dt'}.
$$
First attempt to show that, for small data $u_0$ under the $Z^{\frac{1}{2}, \frac{1}{2}}$ norm, the mapping $\Gamma_{v_0}(v)$ maps a ball to itself. To illustrate the main obstacles, assume $\|v\|_{Z^{\frac{1}{2}, \frac{1}{2}}} \gg \|a\|_{Z^{\frac{1}{2}, \frac{1}{2}}}$ in what follows. Again also keep the $\eta$ implied since it does not change any of the estimates except the Time Localisation Estimate.

By Minkowski's one can bound the norm of $a'$ in terms of the corresponding norm of the nonlinear term, and then by the Homogenous Linear Estimate \ref{hle} and the Inhomogenous Linear Estimate \ref{ile} and Minkowski's
have 
$$
\|\Gamma_{v_0}(v)\|_{Z^{\frac{1}{2}, \frac{1}{2}}} \lesssim \||v+a|^2(v+a) \|_{Z^{\frac{1}{2}, \frac{-1}{2}}} + \|(v+a)^2\partial_x v^*\|_{Z^{\frac{1}{2}, \frac{-1}{2}}} + \||v+a|^2\partial_x v\|_{Z^{\frac{1}{2}, \frac{-1}{2}}} + \||\partial_x| |v+a|^2 \|_{Z^{\frac{1}{2}, \frac{-1}{2}}}.
$$

Applying trilinear estimate Lemma \ref{trilinear1} and again Minkowski's on the first term, and using the fact that $av = \mathbb{P}(av)$ one can treat the local terms with a single $a$ with the bilinear estimate (\ref{bilinear}), the Time Localisation Estimate (\ref{tle}) and then the last bilinear estimate (\ref{CKSTTbilinear}). The nonlocal term in $v$ only can be treated with Theorem \ref{bilinear} and Theorem \ref{tle} since the $Z^{s, b}$ norm of the Fourier multiplier $|\partial_x|$ is the same as that of $\partial_x$. In total, this gives
$$
\|\Gamma_{v_0}(v)\|_{Z^{\frac{1}{2}, \frac{1}{2}}} \lesssim T^\frac{1}{6}\|v\|_{Z^{\frac{1}{2}, \frac{1}{2}}}^3 + \||a|^2 \partial_x v\|_{Z^{\frac{1}{2}, \frac{-1}{2}}} + \|a^2 \partial_x v^*\|_{Z^{\frac{1}{2}, \frac{-1}{2}}} + \|a^*|\partial_x| v\|_{Z^{\frac{1}{2}, \frac{-1}{2}}} + \|a|\partial_x| v^*\|_{Z^{\frac{1}{2}, \frac{-1}{2}}}.
$$

These remaining terms pose more of a challenge. This is because the projections of the $a$ terms are nonzero, so one cannot apply the bilinear estimate \ref{bilinear}. But applying the bilinear estimate \ref{bilinear2} will give us a term like $$\|\partial_x v\|_{Z^{\frac{1}{2}, b}} = \|v\|_{Z^{\frac{3}{2}, b}},$$ which cannot be controlled by the ${Z^{\frac{1}{2}, \frac{1}{2}}}$ space.

However, this is a good start that leaves us with terms that are purely linear in $v$, so one can conjecture that the space ${Z^{\frac{1}{2}, \frac{-1}{2}}}$ will give low-regularity local-well posedness.

\subsection{Failure of Viscosity and Energy Method}
The viscous version of the Dysthe equation is given by 
$$
\partial_tu_\mu-\mu\partial_x^2 u_\mu+ 8\partial_x u_\mu + 2i\partial_x^2 u_\mu -\partial_x^3 u_\mu = \mathcal{N}(u_\mu).
$$
It is expected that when $\mu\to0$, the solutions $u_\mu$ should converge in some sense to a solution of the Dysthe equation -- provided that one can obtain some a priori estimates for the $u_\mu$'s. The viscos Dysthe equation can be solved by considering the Fourier multiplier $$
e^{tQ_{\mu}}f:=e^{t\mu\partial_x^2 -tP(\partial_x)}f,
$$
and solving the integral equation
$$
u_{\mu}(t)=e^{tQ_\mu}u_{\mu, 0}+\int_0^te^{(t-t')Q_\mu}\mathcal{N}(u_{\mu}(t'))dt'.
$$

Throughout these next steps, we work in the space $H^2(\mathbb{T})$ for simplicity.

Notice that $e^{t\mu\partial_x^2}$ works to smooth out our function over time, thus similarly to the KdV equation as in \cite{IorioIorio2001} one can establish a well-posedness estimate in $H^2$, over a time interval $\sim[0,\mu\|u_0\|_{H^2}^{-1}]$. In order to make this lifespan estimate independent of $\mu$, need to apply the extension principle given in \cite{IorioIorio2001}, which relies on estimates of the form
$$
\partial_t \|u_{\mu}\|_{H^2} \lesssim \Phi(\|u_{\mu}\|_{H^2}),
$$
where $\Phi$ is an increasing function. If $u_\mu$ satisfies the viscous KdV equation, have $\partial_t\|u_{\mu}\| \lesssim \|u\|_{H^2}^2$. This is due to the following cancellation property enjoyed by the nonlinearity of the KdV type equations:
$$
\langle \partial_x^2(u\partial_xu),\partial_x^2u\rangle
\lesssim\|u\|_{H^2}^3.
$$
However, we shall provide a counter example showing that such estimate cannot hold for the viscous Dysthe equation.

So we consider $\partial_t\|u_\mu\|_{H^2_x}^2$. Substituting in the equation satisfied by $u_\mu$, expanding out the right hand side as an inner product, and using integration by parts over the torus, one can bound every term using $\|u_\mu\|_{H^2}$ except
$$
I(u_\mu) := \text{Re}\left\langle i u_\mu \cdot\partial_x^2 |\partial_x||u_\mu|^2  , \partial_x^2 u_\mu \right\rangle .
$$
We aim to show that $I(u)$ cannot be bounded in terms of $\|u\|_{H^2}$. Let $f:\mathbb{N} \to \mathbb{N}$ be an increasing integer-valued function that grows faster than any polynomial. The sequence 
$$
v_n = -i + e^{inx} + \frac{1}{f(n)^2}\left[e^{if(n)x} + e^{i(n-f(n))x}\right]
$$
will be our counter example. We claim that the $H^2$ norm of $v_n$ is on the order of $|n|^2$, but $I(v_n)$ and thus the time derivative of the Sobolev norm of $v_n$ is on the order of $f(n)$. If $\Phi$ is already specified, then one can choose the $f$ to grow so fast that $I(v_n)$ cannot be controlled in terms of $\Phi(\|v_n\|_{H^2})$. In fact, a direct calculation gives
$$
I(v_n) = 4f(n) - 7n + \frac{10n^2}{f(n)} - \frac{10n^3}{f(n)^2} + \frac{5n^4}{f(n)^3} - \frac{n^5}{f(n)^4}
\simeq f(n).
$$
If $\Phi$ is a specified increasing function, then one can just choose $f(n)=2^n\Phi(|n|^2)$, so that 
$$
I(v_n)\gg \Phi(|n|^2)\simeq\Phi(\|v_n\|_{H^2}).
$$
This suggests that any energy methods in $H^s$ space should fail for the 1D periodic Dysthe equation. By some more delicate construction, it is expectable to obtain an ill-posedness result similar as in \cite{GrandeKurianskiStaffilani2020}.\\

\section{Illposedness}\label{6}

We follow a similar steps to \cite{GrandeKurianskiStaffilani2020} section 6, which proved similar ill-posedness results for the 2D Dysthe equation in the real space. 
Consider a given function $u_0$. Then we compute the third iteration under a Picard iteration scheme on the Duhamel mapping of the Dysthe equation for initial value $u_0$
$$u_3 = \int_{0}^{t} {e^{i(t-\tau)\mathcal{L}} \mathcal{N}(e^{i\tau \mathcal{L}} u_0) }.$$
Then for the Dysthe equation to be well-posed, we need the continuous mapping condition from $u_0$ to $u_3$, that is,
$$
\|u_0\|_{H^s} \gtrsim \|u_3\|_{L^{\infty}_tH^s_x}.
$$

Now we turn our attention to Theorem \ref{ill}.
\begin{proof}[Proof of Theorem \ref{ill}]
For a given integer $m$, choose our sample initial function
$$
u_0 = \frac{1}{m^s}(e^{-imx} + e^{-i(m-1)x} + e^{i(m+1)x}).
$$
Clearly $\|u_0\|_{H^s} \sim 1$.

Then we compute the contributions to $\mathcal{F}_x u_3 (n)$ for each nonlinear term in $\mathcal{N}$: $\frac{3}{2}|u|^2\partial_x u$; $\frac{i}{2} u |\partial_x| |u|^2$; and $\frac{1}{4}u^2\partial_x u^*$. We disregard the nonlinearity $|u|^2u$ since it does not have a significant contribution as shown in \cite{GrandeKurianskiStaffilani2020}.

For example, the second nonlinearity's contribution to $\mathcal{F}_x u_3(n)$ can be written as 
$$
\frac{i}{2}e^{-itP(n)}\sum_{n_1, n_2 \in \mathbb{Z}^2}{\frac{e^{-it\Omega} -1}{-i\Omega}|n - n_1|\widehat{u_0}({n - n_1 +n_2})\widehat{u_0}({n_1})\widehat{u_0}({n_2})}
$$
where
$$
\Omega = P(n-n_1 + n_2) - P(n) + P(n_1) - P(n_2)
$$
Note that a summand is only nonzero for $6$ possible triplets of $n, n_1, n_2$, since $\widehat{u_0}$ is supported on $\{-m,-(m-1),m+1\}$. We will first consider the triplet that contributes significantly and then show the other five have insignificant contributions. The summands from the other nonlinearities are similar as in \cite{GrandeKurianskiStaffilani2020}.

For the first two terms in the nonlinearity, if $n = m$, $n_1 = -m$, $n_2 = -m+1$ then
$$
P(n - n_1 + n_2) - P(n) + P(n_1) - P(n_2) = -2n \sim m.
$$
We denote the left-hand-side by $\Omega^*$. For the last term, we write 
$$P(-n + n_1 + n_2) + P(n) - P(n_1) - P(n_2) = - \Omega^*.$$
We find the terms that contribute here and call their sum contribution $F$. The terms that contribute here are 
$$
\frac{3i}{2}e^{-itP(n)}{\frac{e^{-it\Omega^*} -1}{-i\Omega^*}(n - n_1 + n_2)\hat{u}({n - n_1})\hat{u}({n_1})\hat{u}({n_2})},
$$ 
$$
\frac{i}{2}e^{-itP(n)}{\frac{e^{-it\Omega^*} -1}{-i\Omega^*}|n - n_1|\hat{u}({n - n_1 + n_2})\hat{u}({n_1})\hat{u}({n_2})},
$$ 
and
$$
\frac{i}{4}e^{-itP(n)}{\frac{e^{it\Omega^*} -1}{i\Omega^*}(n - n_1 - n_2 )\hat{u}({n - n_1 + n_2})\hat{u}({n_1})\hat{u}({n_2})}.
$$
from the terms $\frac{3}{2}|u|^2\partial_x u$, $\frac{i}{2} u |\partial_x| |u|^2$ and $\frac{1}{4}u^2\partial_x u^*$ respectively. 

Choose a small $t$ such that $|t\Omega^*| \ll 1$ (meaning $t \ll m^{-1}$). Then by a Taylor series expansion both the terms 

$$
\frac{e^{-it\Omega^*} -1}{-i\Omega^*} = t + O(t^2).
$$
Since $t$ is small we disregard $O(t^2)$ and the sum of these three contributions is then 
$$
F = \frac{t}{n^{3s}}{\left(\frac{3i}{2}(n+1) + in + \frac{i}{4}(3n+1)\right)} = \frac{t}{n^{3s}}\cdot\frac{i(13n+7)}{4}.
$$
So we find $|F|$ is on the order of 
$$|F| \sim m^{1-3s}t$$ for some small $t$ such that $t \ll m^{-1}$.

For any of the other 5 discrete choices of the triplet $(n, n_1, n_2)$ on which the summand is nonzero, it can be easily shown that $\Omega \gtrsim n^2$. Hence
$$\frac{e^{-it\Omega} -1}{-i\Omega} \lesssim m^{-2},$$ 
and $|n-n_1| \lesssim m$ still must hold so the total contribution of these terms is at most $\lesssim m^{-3s -1}$, which as we will see is not a significant contribution.

Thus $\mathcal{F}_x u_3$ is dominated by $F$ with the peak frequency at $\mathcal{F}u_3(m)$, and we have for small $t \ll m^{-1}$ that
$$
|\mathcal{F}_x u_3(m,t)| \sim |F| \sim n^{-3s+1}{t}
$$ and choosing $t \sim m^{-1}$ we have 
$$
|\mathcal{F}_x u_3(m,t)| \sim m^{-3s}.
$$
But similarly as in \cite{GrandeKurianskiStaffilani2020}, for well-posedness we need
$$1= \|u_0\|_{H^s}\gtrsim \|u_3\|_{L^{\infty}([0, t], H^s)} \gtrsim m^s|\mathcal{F}_x u_3(m,t)| \sim m^{-2s}.$$
Since the number $m$ is arbitrary, there has to hold $s \geq 0$. So if $s<0$ the Dysthe equation must be ill-posed in the Sobolev space $H^s$.
\end{proof}

\section{Acknowledgements}
The authors would like to thank Prof. Gigliola Staffilani from MIT mathematics department for suggesting the project idea. The authors are also grateful to the MIT PRIMES-USA program for the opportunity to research this topic.

\end{spacing}
\end{document}